\newtheorem{theorem}{Theorem}[section]
\newtheorem{lemma}[theorem]{Lemma}
\newtheorem{proposition}[theorem]{Proposition}
\newtheorem{corollary}[theorem]{Corollary}
\newtheorem{question}[theorem]{Question}
\theoremstyle{definition}\newtheorem{definition}[theorem]{Definition}}
\theoremstyle{definition}\newtheorem{remark}[theorem]{Remark}}
\numberwithin{equation}{section}
\def\C{{\mathbb C}}
\def\K{{\mathbb K}}
\def\N{{\mathbb N}}
\def\Z{{\mathbb Z}}
\def\R{{\mathbb R}}
\def\T{{\mathbb T}}
\def\hh{{\mathcal H}}
\def\qq{{\mathbb Q}}
\def\pd{p_{{}_{\scriptstyle D}}}
\def\epsilon{\varepsilon}
\def\phi{\varphi}
\def\leq{\leqslant}
\def\geq{\geqslant}
\def\ker{{\tt ker}\,}
\def\spann{\hbox{\tt span}\,}
\def\supp{\hbox{\tt supp}\,}
\def\spannn{\hbox{$\overline{\hbox{\tt span}}$}\,}
\title{Orbital and strongly orbital spaces}
\author{Stanislav Shkarin}
\date{}
\begin{document}

\maketitle

\begin{abstract}We say that a (countably dimensional) topological
vector space $X$ is orbital if there is $T\in L(X)$ and a vector
$x\in X$ such that $X$ is the linear span of the orbit
$\{T^nx:n=0,1,\dots\}$. We say that $X$ is strongly orbital if,
additionally, $x$ can be chosen to be a hypercyclic vector for $T$. Of course, 
$X$ can be orbital only if the algebraic dimension of $X$ is finite or 
infinite countable. We characterize orbital and strongly orbital metrizable locally
convex spaces. We also show that every countably dimensional
metrizable locally convex space $X$ does not have the invariant
subset property. That is, there is $T\in L(X)$ such that every
non-zero $x\in X$ is a hypercyclic vector for $T$. Finally, assuming
the Continuum Hypothesis, we construct a complete strongly orbital
locally convex space.

As a byproduct of our constructions, we determine the number of
isomorphism classes in the set of dense countably dimensional
subspaces of any given separable infinite dimensional Fr\'echet
space $X$. For instance, in $X=\ell_2\times \omega$, there are
exactly 3 pairwise non-isomorphic (as topological vector spaces)
dense countably dimensional subspaces.
\end{abstract}
\small \noindent{\bf MSC:} \ \ 47A16

\noindent{\bf Keywords:} \ \ Cyclic operators; hypercyclic
operators; invariant subspaces; topological vector spaces
\normalsize

\section{Introduction \label{s1}}\rm

All vector spaces in this article are over the field $\K$  being
either the field $\C$ of complex numbers or the field $\R$ of real
numbers. As usual, $\T=\{z\in\C:|z|=1\}$, $\N$ is the set of
positive integers and $\Z_+=\N\cup\{0\}$. Throughout the article,
all topological spaces {\it are assumed to be Hausdorff}. Recall
that a {\it Fr\'echet space} is a complete metrizable locally convex
space. For a topological vector space $X$, $L(X)$ is the algebra of
continuous linear operators on $X$ and $X'$ is the space of
continuous linear functionals on $X$. Symbol $GL(X)$ stands for the
group of invertible $T\in L(X)$ for which $T^{-1}\in L(X)$. For
$T\in L(X)$, the dual operator $T':X'\to X'$ is defined as usual:
$T'f=f\circ T$.

By saying {\it countable}, we always mean infinite countable. Recall
that the topology $\tau$ of a topological vector space $X$ is called
{\it weak} if $\tau$ is the weakest topology making each $f\in Y$
continuous for some linear space $Y$ of linear functionals on $X$
separating points of $X$. It is well-known and easy to see that a
topology of a metrizable infinite dimensional topological vector
space $X$ is weak if and only if $X$ is isomorphic to a dense linear
subspace of $\omega=\K^\N$.

Recall that a topological vector space $X$ has the {\it invariant
subspace property} if every $T\in L(X)$ has a non-trivial
(=different from $\{0\}$ and $X$) closed invariant subspace.
Similarly, a topological vector space $X$ has the {\it invariant
subset property} if every $T\in L(X)$ has a non-trivial (=different
from $\varnothing$, $\{0\}$ and $X$) closed invariant subset. The
problem whether $\ell_2$ has the invariant subspace property is
known as the invariant subspace problem and remains perhaps the most
famous open problem in operator theory. The problem whether $\ell_2$
has the invariant subset property is also open. It is worth noting
that Read \cite{read1} and Enflo \cite{enf} (see also
\cite{isp-book}) showed independently that there are separable
infinite dimensional Banach spaces, which do not have the invariant
subspace property. In fact, Read \cite{read2,read3} demonstrated
that $\ell_1$ does not have the invariant subset property. It is
worth noting that Atzmon \cite{atz1,atz2} constructed an infinite
dimensional nuclear Fr\'echet space without the invariant subspace
property. All existing constructions of operators on Banach spaces
with no invariant subspaces are rather sophisticated. On the other
hand, examples of separable non-complete normed spaces with or
without the invariant subspace or subset property are relatively
easy to construct.

Recall that $x\in X$ is called a {\it hypercyclic vector} for $T\in
L(X)$ if the orbit
$$
O(T,x)=\{T^nx:n\in\Z_+\}
$$
is dense in $X$. Similarly, $x$ is called {\it cyclic} for $T$ if
the linear span of $O(T,x)$ is dense in $X$. Clearly, $T$ has no
non-trivial invariant subsets (respectively, subspaces) precisely
when every non-zero vector is hypercyclic (respectively, cyclic) for
$T$. If the space in question is countably dimensional, there is an
unusual way to approach the invariant subspace/subset property.

\begin{definition}\label{d1} We say that a
topological vector space $X$ is {\it orbital} if there exist $T\in
L(X)$ and $x\in X$ such that $X=\spann(O(T,x))$. We say that $X$ is
{\it strongly orbital} if there exist $T\in L(X)$ and $x\in X$ such
that $x$ is a hypercyclic vector for $T$ and $X=\spann(O(T,x))$.
\end{definition}

Note that an orbital space always has either finite or countable
dimension. Since finite dimensional topological vector spaces
support no hypercyclic operators, every strongly orbital space is
countably dimensional. We start with the following easy observation.

\begin{lemma}\label{orbi} Each strongly orbital topological vector
space does not have the invariant subset property.
\end{lemma}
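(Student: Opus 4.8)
The plan is to unpack the definitions and show that a strongly orbital space $X$ witnesses a failure of the invariant subset property directly. Suppose $X$ is strongly orbital, so there exist $T\in L(X)$ and $x\in X$ such that $x$ is hypercyclic for $T$ and $X=\spann(O(T,x))$. My goal is to produce a single operator (which will simply be $T$ itself) for which every non-zero vector is hypercyclic; equivalently, $T$ has no non-trivial closed invariant subset. As noted in the remark preceding Lemma~\ref{orbi}, a strongly orbital space is countably dimensional, so $X$ is infinite dimensional and the notion of hypercyclicity is non-vacuous.

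The key observation is a standard hypercyclicity-comparison fact: if $y$ is a hypercyclic vector for $T$ and $p$ is any nonzero polynomial, then $p(T)y$ is again a hypercyclic vector for $T$. First I would recall why this holds. The operator $p(T)$ commutes with $T$ and has dense range (indeed, the set of vectors of the form $p(T)y$ contains $\spann(O(T,y))$, which is dense since $y$ is hypercyclic); an operator commuting with $T$ and having dense range maps hypercyclic vectors to hypercyclic vectors, because $O(T,p(T)y)=p(T)O(T,y)$ and the continuous map $p(T)$ sends a dense set to a dense subset of its range, while dense range forces the image to be dense in $X$. This is the Bourdon--Bès type lemma, and it is the main engine of the argument.

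Now I would exploit the hypothesis $X=\spann(O(T,x))$. This says precisely that every vector $z\in X$ can be written as a finite linear combination $z=\sum_{k} c_k T^{k}x = p(T)x$ for some polynomial $p$. Hence every non-zero $z\in X$ equals $p(T)x$ for some nonzero polynomial $p$ (nonzero because $z\neq 0$), and by the comparison fact just established, $z=p(T)x$ is a hypercyclic vector for $T$. Thus every non-zero vector of $X$ is hypercyclic for $T$, which means $T$ admits no closed invariant subset other than $\varnothing$, $\{0\}$ and $X$. Therefore $X$ fails the invariant subset property.

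The only step requiring genuine care is the comparison lemma, specifically verifying that $p(T)$ has dense range and that a continuous dense-range operator commuting with $T$ preserves hypercyclicity; I expect this to be the main (though still routine) obstacle, since it must be argued in the generality of an arbitrary topological vector space rather than a Banach space. The density of the range of $p(T)$ follows from the hypercyclicity of $x$ itself: the range contains $\{p(T)T^n x : n\in\Z_+\}$, and more simply contains $\spann(O(T,x))=X$ when combined with the orbital hypothesis, so in our setting $p(T)$ is in fact surjective, which makes the preservation of hypercyclicity immediate and sidesteps any subtlety about closures in non-metrizable or non-complete spaces.
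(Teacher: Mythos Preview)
Your overall strategy coincides with the paper's: show that $T$ itself has no non-trivial closed invariant subsets by writing every non-zero $z\in X$ as $p(T)x$ and then arguing that $p(T)x$ is hypercyclic whenever $p\neq 0$. The paper does exactly this.

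The gap is in your justification that $p(T)$ has dense range. Both of your arguments for this are incorrect. First, the parenthetical claim that ``the set of vectors of the form $p(T)y$ contains $\spann(O(T,y))$'' is unsupported: the range of $p(T)$ is $\spann\{T^n p(T)x:n\in\Z_+\}$, and there is no reason this should contain $T^kx$ for small $k$. Second, the stronger claim in your final paragraph that $p(T)$ is \emph{surjective} is false. Take $p(z)=z$; since $X=\spann\{T^nx:n\in\Z_+\}$ with the $T^nx$ linearly independent (as $X$ is infinite dimensional), we have $T(X)=\spann\{T^nx:n\geq 1\}$, a proper hyperplane not containing $x$. So surjectivity fails, and the ``sidesteps any subtlety'' remark is unfounded.

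The density of $p(T)(X)$ for non-zero $p$ is a genuine fact requiring proof; the paper invokes Wengenroth \cite{ww}, who established precisely this for hypercyclic operators on arbitrary (not necessarily locally convex) topological vector spaces. Once that is in hand, your deduction that $O(T,p(T)x)=p(T)(O(T,x))$ is dense is fine and matches the paper's reasoning. So the fix is simply to replace your circular/false dense-range arguments with a citation of Wengenroth's theorem (or a correct self-contained proof of it).
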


\begin{proof} Let $X$ be a topological vector
space and $T\in L(X)$ and $x\in X$ be such that $x$ is a hypercyclic
vector for $T$ and $X=\spann(O(T,x))$. Due to Wengenroth \cite{ww},
$p(T)(X)$ is dense in $X$ for every non-zero polynomial $p$. It
suffices to show that $T$ has no non-trivial invariant subsets. Take
$y\in X\setminus\{0\}$. Since $X=\spann(O(T,x))$, there is a
non-zero polynomial $p$ such that $y=p(T)x$. Hence
$$
O(T,y)=O(T,p(T)y)=p(T)(O(T,x))
$$
and therefore $O(T,y)$  is dense in $X$ since each $p(T)(X)$ is
dense in $X$ and $O(T,x)$ is dense in $X$. Thus every non-zero
vector is hypercyclic for $T$, which means that $T$ has no
non-trivial invariant subsets.
\end{proof}

The following two propositions are known facts and are basically a
compilation of certain results from \cite{fre} and \cite{gri2}. We
present their proofs for the sake of convenience.

\begin{proposition}\label{prop1} Every normed space $X$ of countable
algebraic dimension is strongly orbital and therefore does not have
the invariant subset property.
\end{proposition}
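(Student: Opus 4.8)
The second assertion follows at once from Lemma~\ref{orbi} once the first is established, so the real task is to prove that an arbitrary normed space $X$ with $\dim_\K X=\aleph_0$ is strongly orbital. I would begin by recording the reduction that the problem is really about manufacturing a suitable Hamel basis. Suppose $T\in L(X)$ and $x\in X$ satisfy $X=\spann(O(T,x))$ with $O(T,x)$ dense, and set $x_n=T^nx$. If the $x_n$ were linearly dependent, let $N$ be least with $x_N\in V:=\spann\{x_0,\dots,x_{N-1}\}$; then $Tx_j=x_{j+1}\in V$ for every $j\leq N-1$, so $V$ is $T$-invariant, finite dimensional, and contains $x_0=x$, whence $\spann(O(T,x))\subseteq V$ is finite dimensional, contradicting $\dim_\K X=\aleph_0$. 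Thus the orbit is automatically linearly independent, and being strongly orbital is \emph{equivalent} to the existence of a sequence $(x_n)_{n\geq0}$ in $X$ that is linearly independent, satisfies $\spann\{x_n:n\geq0\}=X$, is dense in $X$, and for which the shift $x_n\mapsto x_{n+1}$ extends to a continuous operator $T\in L(X)$; one then takes $x=x_0$.

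Since $X$ has a countable Hamel basis, $\K$-combinations with coefficients from a countable dense subfield form a countable dense set, so $X$ is separable; let $\widehat X$ denote its completion, a separable infinite dimensional Banach space containing $X$ as a dense subspace. I would fix an algebraic basis $(e_k)_{k\geq1}$ of $X$ together with a dense sequence $(d_k)_{k\geq1}$ in $X$, and construct $(x_n)$ by induction, interleaving two kinds of steps. At a \emph{spanning step} I append a vector so that the first still-missing $e_k$ enters $\spann\{x_0,\dots,x_n\}$, which forces $\spann\{x_n:n\geq0\}=X$ in the limit. At a \emph{density step} I choose the new vector within distance $2^{-n}$ of the next $d_k$, which forces $(x_n)$ to be dense in $X$, hence in $\widehat X$. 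Linear independence is cheap to preserve, since at each step a generic arbitrarily small perturbation keeps $x_0,\dots,x_n$ independent.

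The entire difficulty is concentrated in the remaining requirement, continuity of the shift $T$, and this is the step I expect to be the main obstacle. The defect is intrinsic: because the $x_n$ must be dense they cluster and are ``almost dependent'', so the biorthogonal functionals of the Hamel basis $(x_n)$ have norms tending to $\infty$, and there is no absolutely convergent rank-one expansion $T=\sum_n x_{n+1}\otimes x_n^{\ast}$ available as a shortcut. The plan is therefore to carry, as an extra inductive invariant, a uniform estimate $\|\sum_j c_j x_{j+1}\|\leq M\|\sum_j c_j x_j\|$ valid for all finite scalar families $(c_j)$. The point of using a Hamel basis is that every $v\in X$ \emph{is} a finite combination of the $x_n$, so this estimate over all finite combinations is literally the statement $\|Tv\|\leq M\|v\|$ for all $v\in X$, i.e. $T\in L(X)$ with $\|T\|\leq M$; no extension from a merely dense subspace is needed.

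Maintaining this inequality when a new vector is appended is the delicate quantitative core, and it is here that the freedom in the construction must be spent with care: each prescribed target (a given $e_k$, or a near-copy of $d_k$) should be attained only up to a small, adaptively chosen error and along directions that do not inflate the norm of the enlarged shift, the key quantity to control being the new coordinate functional measured against $\mathrm{dist}(x_n,\spann\{x_0,\dots,x_{n-1}\})$. This is precisely the technical mechanism supplied by the constructions of \cite{fre} and \cite{gri2}, which I would invoke to secure the uniform bound. Granting it, $x=x_0$ is a hypercyclic vector for $T\in L(X)$ whose orbit spans $X$ algebraically, so $X$ is strongly orbital, and by Lemma~\ref{orbi} it does not have the invariant subset property.
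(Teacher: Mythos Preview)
Your reduction is correct and clarifying: strong orbitality of $X$ amounts to finding a dense Hamel basis $(x_n)_{n\geq0}$ of $X$ for which the forward shift is bounded, and you correctly locate the real difficulty in maintaining the norm estimate.

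The gap is that you do not actually carry out that step. You assert that maintaining a uniform bound $\|\sum c_jx_{j+1}\|\leq M\|\sum c_jx_j\|$ throughout the induction ``is precisely the technical mechanism supplied by the constructions of \cite{fre} and \cite{gri2}'', but neither paper contains a direct inductive construction of a bounded shift along a Hamel basis of a \emph{prescribed} countably dimensional subspace. What Grivaux actually supplies is a transitivity theorem: $GL(B)$ acts transitively on the countable dense linearly independent subsets of a separable infinite dimensional Banach space $B$, hence on its dense countably dimensional subspaces. Your appeal to those references does not close the gap you yourself identify as the crux.

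The paper sidesteps your difficulty entirely by using that transitivity result externally rather than internally. Pass to the completion $B$ of $X$; take any hypercyclic $T_0\in L(B)$ with hypercyclic vector $x_0$ (Ansari, Bernal--Gonz\'ales); set $Z=\spann(O(T_0,x_0))$. Now $Z$ and $X$ are both dense countably dimensional subspaces of $B$, so Grivaux gives $S\in GL(B)$ with $S(Z)=X$. Then $A=ST_0S^{-1}\bigr|_X\in L(X)$, the vector $Sx_0$ is hypercyclic for $A$, and its orbit spans $X$. No inductive norm estimate is ever needed: the bounded operator is produced once on the completion and then transported to $X$ by conjugation. Your direct route may well be completable, but doing so would amount to redoing the hard part of Grivaux's argument inside your induction, whereas the conjugation argument consumes it in one line.
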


\begin{proof} Let $B$ be the completion of $X$. Then $B$ is a separable infinite
dimensional Banach space. According to Ansari \cite{ansa1} and
Bernal--Gonz\'ales \cite{bernal}, there is a hypercyclic $T\in
L(B)$. That is, there is $x\in B$ such that $\{T^nx:n\in\Z_+\}$ is
dense in $B$. Let $Z$ be the linear span of $T^nx$ for $n\in\Z_+$.
Since both $Z$ and $X$ are dense countably dimensional linear
subspaces of $B$, according to Grivaux \cite{gri2}, there is $S\in
GL(B)$ such that $S(Z)=X$. Since $Z$ is invariant for $T$, $X$ is
invariant for $STS^{-1}$. Hence, the restriction
$A=STS^{-1}\bigr|_{X}$ belongs to $L(X)$, $O(A,Sx)=S(O(T,x))$ is
dense and $X=\spann(O(A,Sx))$. Thus $X$ is strongly orbital. By
Lemma~\ref{orbi}, $X$ does not have the invariant subset property.
\end{proof}

\begin{proposition}\label{prop1a} In every separable infinite dimensional
Fr\'echet space $X$, there is a dense linear subspace $E$ such that
$E$ has the invariant subspace property.
\end{proposition}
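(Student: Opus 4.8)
The plan is to exploit the fact that, since $E$ is dense in the complete space $X$, every continuous linear operator on $E$ is uniformly continuous and hence extends uniquely to a continuous operator on $X$; consequently $L(E)$ is canonically identified with the set $\{S\in L(X):S(E)\subseteq E\}$ of operators leaving $E$ invariant. Thus it suffices to build a dense $E$ so thin that every $S\in L(X)$ with $S(E)\subseteq E$ has a non-trivial closed invariant subspace inside $E$. I would first observe that most operators are harmless for free. If $S=\lambda I$, any closed hyperplane $\ker\phi$ with $\phi\in X'\setminus\{0\}$ is invariant. If $S$ is non-scalar and $(S-\mu I)X$ fails to be dense for some $\mu$, then $\overline{(S-\mu I)(E)}\cap E$ is a proper non-zero closed $S$-invariant subspace of $E$: it is non-zero because $S\neq\mu I$, and proper because $E$ is dense while $\overline{(S-\mu I)X}\neq X$. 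Hence the only dangerous operators are those $S$ for which $S-\mu I$ is injective with dense range for every $\mu\in\K$, together with the borderline ones possessing a huge eigenspace.

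The key technical step I would isolate is the following thinning lemma: if $\dim((S-\mu I)X)=\dim X=\mathfrak c$ for every $\mu\in\K$ and $M\subseteq X$ is a subspace with $\dim M<\mathfrak c$, then there is a vector $u$ with $(S-cI)u\notin M$ for all $c\in\K$ simultaneously. The proof is a short dimension count: if no such $u$ existed, then for each $u\notin M$ there would be a unique scalar $c_u$ with $(S-c_uI)u\in M$; comparing $c_u$, $c_v$ and $c_{u+v}$ for vectors independent modulo $M$ forces a single $\lambda$ with $(S-\lambda I)X\subseteq M$, contradicting $\dim((S-\lambda I)X)=\mathfrak c$. I would also record the two easy selection facts that drive the recursion: in an infinite-dimensional $X$ one can approximate a prescribed point arbitrarily well by vectors lying outside a given subspace of dimension $<\mathfrak c$, and a subspace of dimension $\mathfrak c$ (such as a huge eigenspace) is never contained in one of dimension $<\mathfrak c$.

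With these in hand I would construct $E$ by transfinite recursion of length $\mathfrak c=|L(X)|=\dim X$, maintaining a linearly independent set $B$ (whose span will be $E$) together with a disjoint independent ``excluded'' set $F$, so that $B\cup F$ stays linearly independent at every stage; at stage $\alpha$ the span of $B_\alpha\cup F_\alpha$ has dimension $<\mathfrak c$. Enumerating $L(X)=\{S_\alpha:\alpha<\mathfrak c\}$, at stage $\alpha$ I classify $S_\alpha$: if every $S_\alpha-\mu I$ has full-dimensional range, I use the thinning lemma to pick a witness $u_\alpha\notin\spann(B_\alpha\cup F_\alpha)$ with $S_\alpha u_\alpha\notin\spann(B_\alpha\cup F_\alpha)+\K u_\alpha$, put $u_\alpha$ into $B$ and $S_\alpha u_\alpha$ into $F$; if instead some $(S_\alpha-\mu_0 I)X$ is dense of dimension $<\mathfrak c$, then $\ker(S_\alpha-\mu_0 I)$ has dimension $\mathfrak c$, so I put an eigenvector $e\in\ker(S_\alpha-\mu_0 I)$ with $e\notin\spann(B_\alpha\cup F_\alpha)$ into $B$; the remaining (non-dense-range) operators require no action. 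Interleaving countably many stages at which I throw into $B$ a vector approximating the $n$-th point of a fixed countable dense subset of $X$ (again chosen outside the current span), I obtain a dense $E=\spann(B)$. Because $B\cup F$ is independent, for each thinned operator $u_\alpha\in E$ while $S_\alpha u_\alpha\notin E$, so $S_\alpha\notin L(E)$; every surviving operator in $L(E)$ is scalar, has non-dense defect, or has an eigenvector inside $E$, and hence owns a non-trivial closed invariant subspace. Thus $E$ has the invariant subspace property.

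I expect the main obstacle to be the bookkeeping that reconciles density with the exclusions — in particular verifying that inserting the density vectors and the eigenvectors never drags an excluded image $S_\alpha u_\alpha$ back into $\spann(B)$, which is exactly what the invariant ``$B\cup F$ linearly independent'' is engineered to guarantee — together with the correct delineation of the operator classes, so that precisely the operators one cannot thin are those carrying an invariant subspace for free. The dimension-theoretic thinning lemma is the analytic heart of the argument; everything else is a careful transfinite accounting that keeps all the relevant spans of dimension $<\mathfrak c$.
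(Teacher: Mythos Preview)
Your approach is correct but takes a substantially different route from the paper's. The paper gives essentially a one-line proof: it cites the known fact (attributed to \cite{bonet}) that every separable infinite-dimensional Fr\'echet space $X$ contains a dense subspace $E$ (which can even be taken to be a hyperplane) such that every $T\in L(E)$ has the form $\lambda I+S$ with $S$ of finite rank; any such $T$ trivially has a one-dimensional invariant subspace, so $E$ has the invariant subspace property. Your transfinite construction is in the same spirit as the proofs underlying such rigid-subspace theorems, but you aim directly at the invariant subspace property rather than the stronger structural conclusion on $L(E)$. This buys self-containment at the price of length; the paper's route buys brevity by outsourcing all the work to the reference.

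One small gap worth flagging: your thinning lemma as stated only yields $u$ with $(S-cI)u\notin M$ for all $c$, whereas in the recursion you also need $u\notin M$ (so that $B\cup F$ stays independent after adding both $u$ and $Su$). The required strengthening does hold---assuming that every $u\notin M$ admits some $c$ with $(S-c)u\in M$, one first checks that any $m\in M$ with $Sm\notin M$ would force $X\subseteq \K\,Sm+M$, which is impossible; hence $S(M)\subseteq M$, and then your uniqueness-of-$c_u$ argument gives a single $\lambda$ with $(S-\lambda)X\subseteq M$, contradicting the dimension hypothesis---but this should be made explicit. You also implicitly use $\dim X=\mathfrak c$ for an infinite-dimensional separable Fr\'echet space; this is true (e.g.\ via an embedded Banach disk and Mackey's theorem), but is worth stating.
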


\begin{proof}
There is a dense linear subspace $E$ of $X$ ($E$ can even be chosen
to be a hyperplane \cite{bonet}) such that every $T\in L(E)$ has the
shape $\lambda I+S$ with $\lambda\in \K$ and $\dim S(X)<\infty$.
Trivially, such a $T$ has a one-dimensional invariant subspace.
\end{proof}

\subsection{Results}

We partially extend Proposition~\ref{prop1} from the class of normed
spaces to the class of metrizable locally convex (topological
vector) spaces.

\begin{theorem}\label{th1} Every metrizable locally convex space $E$ of
countable algebraic dimension does not have the invariant subset
property.
\end{theorem}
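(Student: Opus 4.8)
The plan is to produce a single operator on $E$ for which \emph{every} nonzero vector is hypercyclic; by the remark in the introduction this exactly witnesses the failure of the invariant subset property. I would not attempt to show that $E$ is strongly orbital (it need not be), but rather run the mechanism of Lemma~\ref{orbi} on a subspace \emph{larger} than $E$. First I would pass to the completion: let $F$ be the completion of $E$, a separable infinite dimensional Fr\'echet space in which $E$ is dense and carries the subspace topology. The key elementary remark is that, for $e\in E$ and $T\in L(F)$ with $T(E)\subseteq E$, the vector $e$ is hypercyclic for $T|_E$ in $E$ if and only if $O(T,e)$ is dense in $F$, since $E$ is dense in $F$. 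If $F$ is normable, then $E$ is a countably dimensional normed space and Proposition~\ref{prop1} already gives the conclusion; so the substance lies in the case of non-normable $F$.

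The heart of the argument is a construction: given the dense countably dimensional subspace $E$ of $F$, build a hypercyclic $T\in L(F)$, with hypercyclic vector $u$, such that $T(E)\subseteq E$ and $E\subseteq W:=\spann(O(T,u))$. Granting this, the proof finishes exactly as in Lemma~\ref{orbi}. By Wengenroth \cite{ww}, $q(T)(F)$ is dense for every nonzero polynomial $q$; every $e\in E\setminus\{0\}$ lies in $W$, hence $e=q(T)u$ for some nonzero $q$, so $O(T,e)=q(T)(O(T,u))$ is dense in $F$ (continuity of $q(T)$ together with density of $O(T,u)$ and of $q(T)(F)$). By the remark above every nonzero $e\in E$ is then hypercyclic for $T|_E\in L(E)$, so $T|_E$ has no nontrivial closed invariant subset, and $E$ fails the invariant subset property. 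Crucially, $u$ need not lie in $E$, and typically $W\supsetneq E$; this is precisely what frees us from having to make $E$ itself strongly orbital.

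The main obstacle is the construction of $T$ in the non-normable case, where — in contrast to the Banach setting used in Proposition~\ref{prop1} via Grivaux \cite{gri2} — dense countably dimensional subspaces split into several $GL(F)$-orbits, so one cannot simply transport a ready-made orbit span onto $E$ by an automorphism. I would instead define $T$ by a back-and-forth induction. Fixing an increasing sequence of seminorms $\|\cdot\|_1\leq\|\cdot\|_2\leq\cdots$ defining the topology of $F$, a Hamel basis $(e_n)$ of $E$, and a countable subset dense in $F$, I would simultaneously extend a Hamel basis of an ambient subspace $W\supseteq E$ and prescribe the action of $T$ on it so that (i) the orbit of $u$ passes through vectors approximating every target (ensuring hypercyclicity), (ii) each $e_n$ is reached as $q_n(T)u$ for some polynomial $q_n$ (ensuring $E\subseteq W$), and (iii) the defining relations keep each $\spann(e_1,\dots,e_n)$ inside $E$ and $T$-consistent (ensuring $T(E)\subseteq E$).

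The delicate point at each step is to choose the newly prescribed values of $T$ small enough in the relevant seminorms to guarantee that $T$ is continuous on $F$, yet large enough to realise the density required in (i); balancing these seminorm estimates while respecting the extra invariance constraint (iii) is the technical crux. I expect that handling the direction in which $F$ fails to be normable — equivalently, the presence of an $\omega$-type factor, along which seminorms degenerate — is where the estimates are most sensitive, and where the construction genuinely departs from the normed case of Proposition~\ref{prop1}.
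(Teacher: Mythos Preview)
Your reduction step is correct: if you can produce $T\in L(F)$ with $T(E)\subseteq E$, a hypercyclic $u\in F$, and $E\subseteq W:=\spann(O(T,u))$, then every nonzero $e\in E$ is hypercyclic for $T\bigr|_E$ exactly as you argue. The gap is that such a $T$ and $u$ \emph{cannot exist} when $E\in{\cal M}_2$, so your construction is bound to fail precisely in the case you correctly identify as delicate.

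Here is the obstruction. Since $u$ is hypercyclic on the infinite dimensional space $F$, the vectors $T^nu$ are linearly independent, so $\Phi:\K[z]\to W$, $\Phi(q)=q(T)u$, is a linear bijection. The conditions $E\subseteq W$ and $T(E)\subseteq E$ force $\Phi^{-1}(E)$ to be a $z$-invariant subspace of $\K[z]$, hence an ideal $(q_0)$; in particular $\Phi^{-1}(E)$ has finite codimension $\deg q_0$ in $\K[z]$. Now take a continuous seminorm $p$ on $F$ witnessing $E\in{\cal M}_2$: $E_0:=E\cap\ker p$ is infinite dimensional with weak topology and $E/E_0$ is infinite dimensional with a continuous norm. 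The map $E_0\to E/E_0$, $x\mapsto Tx+E_0$, is continuous from a weak-topology space to a normed space, hence has finite dimensional range; thus $T(E_0)\subseteq E_0+L$ with $\dim L<\infty$. Setting $J=\Phi^{-1}(E_0)$, this gives $\dim\bigl((J+zJ)/J\bigr)<\infty$. By Lemma~\ref{idea}, $J$ is finite dimensional or has finite codimension in $\K[z]$. But $J$ is infinite dimensional (since $E_0$ is), while $(q_0)/J\cong E/E_0$ is infinite dimensional, so $J$ has infinite codimension. This contradiction shows your construction scheme is impossible for $E\in{\cal M}_2$; in effect, the non-orbitality argument of Proposition~\ref{noorb} survives even when $u$ is allowed to leave $E$.

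The paper avoids this by a genuine case split. For $E\in{\cal M}_0\cup{\cal M}_1\cup{\cal M}_3$ it proves strong orbitality (Propositions~\ref{boho}, \ref{boho1}, \ref{nonom}) and quotes Lemma~\ref{orbi}. For $E\in{\cal M}_2$ it does \emph{not} seek $E\subseteq\spann(O(T,u))$; instead (Lemma~\ref{oper1}, Proposition~\ref{titi}) it builds a bilateral shift $T$ on $F$ and a subspace containing $\spann(O(T,w))$ together with a sequence of backward iterates $u_{-k}$ satisfying $T^ku_{-k}=w$, so that for every $v\in E$ one has $T^Nv\in\spann(O(T,w))$ for some $N$. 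That weaker relation suffices for the Bourdon/Wengenroth argument and is not blocked by Lemma~\ref{idea}. If you want to repair your approach, the natural fix is to replace ``$E\subseteq W$'' by ``for every $e\in E$ there exists $N$ with $T^Ne\in W$''; this is exactly what the paper arranges in the ${\cal M}_2$ case.
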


It turns out that not every metrizable locally convex space $X$ of
countable algebraic dimension is strongly orbital or even orbital.
In order to formulate the result neatly, we split the class ${\cal
M}$ of infinite dimensional metrizable locally convex spaces into
$4$ subclasses. Given $X\in{\cal M}$ and an increasing sequence
$\{p_n\}_{n\in\N}$ of seminorms on $X$ defining the topology of $X$,
we have the following $4$ possibilities:
\begin{itemize}\itemsep-3pt
\item[($A_0$)]$X/\ker p_1$ is finite dimensional and $\ker p_n/\ker
p_{n+1}$ is finite dimensional for each $n\in\N$;
\item[($A_1$)]there is $n\in\N$ such that $\ker p_n=\{0\}$;
\item[($A_2$)]$X/\ker p_1$ is infinite dimensional, $\ker p_n/\ker
p_{n+1}$ is finite dimensional for each $n\in\N$ and is non-zero for
infinitely many $n\in\N$;
\item[($A_3$)]$\ker p_n/\ker p_{n+1}$ is infinite dimensional for
infinitely many $n\in\N$.
\end{itemize}
It is an easy exercise (left to the reader) to show that exactly one
of the conditions ($A_0$--$A_3$) is satisfied for each $X$ and
$\{p_n\}$ and that each of the conditions ($A_0$--$A_3$) does not
depend on the choice of an increasing sequence $\{p_n\}_{n\in\N}$ of
seminorms defining the topology of $X\in{\cal M}$. Thus, conditions
($A_0$--$A_3$) split the class ${\cal M}$ into four disjoint
subclasses ${\cal M}_j$ with $0\leq j\leq 3$, where $X\in{\cal M}_j$
if for some (=for each) increasing sequence $\{p_n\}_{n\in\N}$ of
seminorms on $X$ defining the topology of $X$, the condition $(A_j)$
is satisfied.

\begin{remark}\label{MJ}
It is easy to show that the following alternative way of defining
the classes ${\cal M}_j$ holds. Namely, let $X\in{\cal M}$. Then
\begin{itemize}\itemsep-3pt
\item $X\in {\cal M}_0$ if and only if the topology of $X$ is weak;
\item $X\in{\cal M}_1$ if and only if $X$ possesses a continuous
norm;
\item $X\in {\cal M}_2$ if and only if there is a closed linear
subspace $Y$ of $X$ such that the topology of $Y$ is weak, $X/Y$
possesses a continuous norm and both $Y$ and $X/Y$ are infinite
dimensional;
\item $X\in {\cal M}_3$ if and only if the topology of $X$ can be
defined by an increasing sequence $\{p_n\}_{n\in\N}$ of seminorms
such that $\ker p_n/\ker p_{n+1}$ is infinite dimensional for every
$n\in\N$.
\end{itemize}
\end{remark}

\begin{theorem}\label{th1a} Let $E$ be a metrizable locally convex
space of countable algebraic dimension. Then
$$
\text{$E$ is orbital\ }\iff\text{\ $E$ is strongly orbital\ }\iff \
E\notin {\cal M}_2.
$$
\end{theorem}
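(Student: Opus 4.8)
The three conditions will be shown equivalent by the cycle: strongly orbital $\Rightarrow$ orbital $\Rightarrow E\notin{\cal M}_2\Rightarrow$ strongly orbital. The first implication is immediate from the definitions. Before treating the other two, observe that if an infinite dimensional $E$ is orbital via $T\in L(E)$ and $x\in E$, then the vectors $T^nx$ must be linearly independent (otherwise $\spann(O(T,x))$ would be finite dimensional), so $\{T^nx:n\in\Z_+\}$ is a Hamel basis of $E$ and the pair $(E,T)$ is linearly isomorphic to $\K[t]$ equipped with the multiplication operator $M_t:f\mapsto tf$. Moreover, exactly as in the proof of Lemma~\ref{orbi}, Wengenroth's theorem \cite{ww} shows that for every non-zero polynomial $p$ the operator $p(T)$ has dense range; since each non-zero $y\in E$ equals $p(T)x$ for some $p\neq0$, every non-zero vector is cyclic for $T$. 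Consequently $T$ has no non-trivial closed invariant subspace, and each $p(T)$ with $p\neq0$ is injective (note $p(T)\neq0$ as the $T^nx$ are independent, so $\ker p(T)=0$: otherwise it would be a non-trivial closed invariant subspace). I shall use this rigidity in both remaining implications.

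For the implication $E\notin{\cal M}_2\Rightarrow$ strongly orbital I plan to follow the scheme of Proposition~\ref{prop1}. Since strong orbitality is clearly preserved by topological isomorphisms, it suffices to realize $E$, up to isomorphism, as the linear span of the orbit of a hypercyclic vector. Let $F=\hat E$ be the completion of $E$; then $F$ is a separable infinite dimensional Fr\'echet space, and since the class ${\cal M}_j$ is invariant under completion we have $F\notin{\cal M}_2$. By the results of Ansari \cite{ansa1} and Bernal--Gonz\'ales \cite{bernal} (and their extension to Fr\'echet spaces), there is a hypercyclic $T\in L(F)$; fixing a hypercyclic vector $x$ and putting $Z=\spann(O(T,x))$, the space $Z$ is a dense countably dimensional $T$-invariant subspace of $F$ on which $T|_Z$ acts with dense orbit and $Z=\spann(O(T|_Z,x))$, so $Z$ is strongly orbital. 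It then remains to transport this structure to $E$, for which I would prove the following classification statement (the ``byproduct'' of the abstract, generalizing Grivaux \cite{gri2} from Banach spaces): whenever $F$ is a separable infinite dimensional Fr\'echet space with $F\notin{\cal M}_2$, the group $GL(F)$ acts transitively on the dense countably dimensional subspaces of $F$. Granting this, there is $S\in GL(F)$ with $S(Z)=E$, and then $STS^{-1}\bigr|_E$ witnesses that $E$ is strongly orbital, exactly as in Proposition~\ref{prop1}. The three cases $F\in{\cal M}_0,{\cal M}_1,{\cal M}_3$ would be handled inside this transitivity lemma by a back-and-forth argument adapted to the weak, the normable, and the ``thick'' parts respectively.

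The implication orbital $\Rightarrow E\notin{\cal M}_2$, which I expect to be the main obstacle, I would prove contrapositively: assuming $E\in{\cal M}_2$ is orbital via $T$, I seek a contradiction with the rigidity above. By Remark~\ref{MJ} fix an increasing sequence of seminorms $\{p_n\}$ defining the topology with $Y:=\ker p_1$ infinite dimensional and carrying the weak topology, while $N:=E/\ker p_1$ is an infinite dimensional normed space. Continuity of $T$ yields, for each $m$, an index $m'$ with $p_m(Tz)\leq C\,p_{m'}(z)$; hence $T(\ker p_{m'})\subseteq\ker p_m$, and iterating along a chain $1=m_0<m_1<m_2<\cdots$ gives $T^j(\ker p_{m_j})\subseteq\ker p_1=Y$ for every $j$. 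Thus $T$ maps the cofinite (in $Y$) subspaces $\ker p_{m_j}$ into $Y$, while the quotient $N$ remains infinite dimensional; the goal is to turn this partial compatibility of $T$ with the weak part $Y$ into a non-trivial closed $T$-invariant subspace, contradicting that every non-zero vector is cyclic. The difficulty, and the crux of the whole theorem, is that $Y$ itself need not be $T$-invariant, so one must extract the invariant subspace from the interplay between the infinite dimensional weak kernel $Y$ and the infinite dimensional normed quotient $N$ --- intuitively, a single orbit cannot simultaneously fill an infinite dimensional normable quotient and leave an infinite dimensional weak kernel while the shift stays continuous. Making this dichotomy precise (most plausibly by passing to the dual, where $T'$ acts as a backward shift on $E'\hookrightarrow\K^{\Z_+}$ and $Y^\perp\cong N'$ is an infinite dimensional Banach space, and locating a weak-$*$ closed $T'$-invariant subspace whose pre-annihilator is proper and non-zero) is the step I expect to require the most work.
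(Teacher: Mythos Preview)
Your proposal has genuine gaps in both nontrivial implications.

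For $E\notin{\cal M}_2\Rightarrow$ strongly orbital, the transitivity lemma you propose is false. By Corollary~\ref{grgr1}, if the completion $F$ lies in ${\cal F}_2$ there are three isomorphism classes of dense countably dimensional subspaces, and if $F\in{\cal F}_3$ there are uncountably many; so $GL(F)$ acts transitively on ${\cal E}(F)$ only when $F\in{\cal F}_0\cup{\cal F}_1$. Your claim that the ${\cal M}_j$ classes pass to completions is also false: any $X\in{\cal F}_2\cup{\cal F}_3$ contains dense countably dimensional subspaces in ${\cal M}_1$ (this is (\ref{grgr11}.2)), so $E\in{\cal M}_1$ does not force $F\in{\cal F}_1$. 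The paper's route is instead to construct the hypercyclic orbit so that its span lands in the correct isomorphism class. For $E\in{\cal M}_1$ one shows (Proposition~\ref{boho1}) that the orbit of a suitably chosen hypercyclic vector is $p$-independent, so its span also lies in ${\cal M}_1$, and then Corollary~\ref{GR2} supplies the isomorphism. For $E\in{\cal M}_3$ one builds (Lemma~\ref{hyhoho}) a hypercyclic $T$ on the completion whose orbit satisfies $\overline{O(T,x)\cap\ker p_n}=\overline{E\cap\ker p_n}$ for every $n$; the isomorphism then comes from Corollary~\ref{GRgrgr}, which only requires these ``profiles'' to match rather than full transitivity.

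For orbital $\Rightarrow E\notin{\cal M}_2$, your rigidity step is unjustified: Wengenroth's theorem yields dense range of $p(T)$ only for hypercyclic $T$, and mere orbitality does not suffice (cf.\ Remark~\ref{phirem}, which needs the extra hypothesis that $T'$ has empty point spectrum to reach the invariant-subspace conclusion). The paper's argument (Proposition~\ref{noorb}) bypasses invariant subspaces entirely. The key observation is that any continuous linear map from a space carrying weak topology to one possessing a continuous norm has finite rank; applied to $\ker p\hookrightarrow E\xrightarrow{T}E\twoheadrightarrow E/\ker p$ this gives $T(\ker p)\subseteq\ker p+L$ with $\dim L<\infty$. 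Pulling back along the linear bijection $\K[z]\to E$, $q\mapsto q(T)x$, the subspace $J=\{q:q(T)x\in\ker p\}$ then satisfies $\dim(J+zJ)/J<\infty$, and an elementary lemma (Lemma~\ref{idea}) forces such a $J$ to be either finite dimensional or of finite codimension in $\K[z]$; both alternatives contradict $E\in{\cal M}_2$. This finite-rank/polynomial argument is the crux you are missing.
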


The above theorem characterizes (strongly) orbital metrizable
countably dimensional locally convex topological vector spaces.
Proposition~\ref{prop1a} indicates that completeness is an essential
difficulty in constructing operators with no non-trivial invariant
subspaces. Countable algebraic dimension is often perceived as
almost incompatible with completeness. Basically, there is only one
complete topological vector space of countable dimension, most
analysts are aware of. Namely, the locally convex direct sum $\phi$
(see \cite{rob}) of countably many copies of the one-dimensional
space $\K$ has countable dimension and is complete. In other words,
$\phi$ is a vector space of countable dimension endowed with the
topology defined by the family of {\it all$\,$} seminorms. It is
easy to see that every linear subspace of $\phi$ is closed, which
leads to the following observation. It features (many times) in the
literature, see, for instance, \cite{koe}. We present the proof for 
the sake of completeness.

\begin{proposition}\label{prop2} The space $\phi$ does not support a
cyclic operator with dense range $($and therefore has the invariant
subspace property$).$
\end{proposition}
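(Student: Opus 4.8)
The plan is to exploit the single structural fact singled out just before the statement: every linear subspace of $\phi$ is closed. This trivializes the topological notions involved. Indeed, for a cyclic vector $x$ of $T\in L(\phi)$, the set $\spann(O(T,x))$ is a linear subspace, hence closed, so being cyclic forces $\spann(O(T,x))=\phi$ outright; likewise dense range forces $T(\phi)=\phi$, since $T(\phi)$ is again a (closed) subspace. Thus it suffices to show that no $T\in L(\phi)$ admits a vector $x$ with $\spann(O(T,x))=\phi$ and $T(\phi)=\phi$ simultaneously.

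First I would argue that a cyclic vector's orbit is automatically a Hamel basis. Suppose $\spann(O(T,x))=\phi$. If the vectors $T^nx$ were linearly dependent, there would be a least $k$ with $T^kx\in\spann\{T^jx:0\le j<k\}$; applying $T$ repeatedly then shows that $T^mx$ lies in this same finite-dimensional span for every $m\ge k$, whence $\phi=\spann(O(T,x))$ would be finite dimensional, which is impossible. So $\{T^nx:n\in\Z_+\}$ is linearly independent and, spanning $\phi$, is a Hamel basis.

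With respect to this basis, $T$ is nothing but the forward (unilateral) shift $T^nx\mapsto T^{n+1}x$. Consequently $T(\phi)=\spann\{T^nx:n\ge 1\}$ does not contain $x=T^0x$, so $T$ is not surjective, contradicting $T(\phi)=\phi$. This proves that $\phi$ supports no cyclic operator with dense range. For the parenthetical assertion I would argue the contrapositive: if some $T\in L(\phi)$ had no non-trivial closed invariant subspace, then (using again that all subspaces are closed) $\ker T$ and $T(\phi)$, being invariant subspaces, must each equal $\{0\}$ or $\phi$. Ruling out $T=0$ (which has invariant lines) leaves $T$ bijective, and for each $x\ne 0$ the invariant subspace $\spann(O(T,x))$ must equal $\phi$, so every non-zero vector is cyclic. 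Then $T$ would be a cyclic operator with $T(\phi)=\phi$, i.e.\ with dense range, contradicting the first part. Hence every $T$ has a non-trivial closed invariant subspace, and $\phi$ has the invariant subspace property.

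There is no serious obstacle here: the only points requiring care are the elementary induction showing the orbit is independent, and the bookkeeping observation that in $\phi$ the word \emph{dense} may everywhere be replaced by \emph{equal to} $\phi$.
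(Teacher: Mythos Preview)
Your proof is correct and follows essentially the same approach as the paper's: both hinge on the fact that every linear subspace of $\phi$ is closed, split into the dependent/independent orbit cases, and in the independent case observe that $T$ restricted to $E=\spann(O(T,x))$ is the forward shift, whose range is a proper hyperplane. The only organizational difference is that you front-load the reduction ``dense $=$ equal to $\phi$'' and thus argue directly with surjectivity, whereas the paper keeps the language of density until the final step; you also spell out the parenthetical implication about the invariant subspace property, which the paper leaves implicit.
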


\begin{proof} Assume that $T\in L(\phi)$ has dense range and $x\in\phi$.
We have to show that $x$ is not a cyclic vector for $T$. If $T^nx$
for $n\in\Z_+$ are not linearly independent, then $\spann(O(T,x))$
is finite dimensional and therefore $x$ is not a cyclic vector for
$T$. If $T^nx$ for $n\in\Z_+$ are linearly independent, then
$E=\spann(O(T,x))\neq T(E)=\spann(O(T,Tx))$. Indeed, in this case
$T(E)$ is a hyperplane in $E$. Hence $T(E)\neq\phi$. Since every
linear subspace of $\phi$ is closed, $T(E)$ is not dense in $\phi$.
Since $T$ has dense range, $E$ is not dense in $\phi$. Since
$E=\spann(O(T,x))$, $x$ is not a cyclic vector for $T$.
\end{proof}

\begin{remark}\label{phirem} The space $\phi$ is orbital.
If one is after an analogue of Lemma~\ref{orbi} for the invariant
subspace property, a modification of orbitality is needed. Indeed,
one can show that if $X$ is a countably dimensional locally convex
space $x\in X$, $T\in L(X)$, $X=\spann(O(T,x))$ and the point
spectrum of the dual operator $T'$ (we have to take the
complexification of $T'$ in the case $\K=\R$) is empty, then $X$
does not have the invariant subspace property. More specifically,
$T$ does not have non-trivial invariant subspaces. The proof goes
along the same lines as in Lemma~\ref{orbi}.
\end{remark}

In the proof of Proposition~\ref{prop1} we have used the fact (due
to Grivaux \cite{gri2}) that isomorphisms on a separable infinite
dimensional Banach space act transitively on the set of dense
countable linearly independent sets. This property for more general
spaces is studied in \cite{ss}. In this paper we are interested in
the following corollary of this result. Namely, it immediately
follows that $GL(X)$ acts transitively on the set ${\cal E}(X)$ of
dense countably dimensional subspaces of a separable Banach space
$X$. Equivalently, countably dimensional normed spaces are
isomorphic as topological vector spaces precisely when their
completions are isomorphic. We extend this result to Fr\'echet
spaces. The answer turns out to be rather unexpected. Again, we
split the class ${\cal F}$ of separable infinite dimensional
Fr\'echet spaces into four subclasses
$$
\text{${\cal F}_j={\cal F}\cap{\cal M}_j$ for $0\leq j\leq 3$.}
$$
Using the well-known facts that every infinite dimensional Fr\'echet
space, whose topology is weak, is isomorphic to $\omega$ and that
every isomorphic to $\omega$ subspace of a Fr\'echet space is
complemented, we see that Remark~\ref{MJ} implies the following
fact.

\begin{remark}\label{MJ1}
Let $X\in{\cal F}$. Then $X\in {\cal F}_0$ if and only if $X$ is
isomorphic to $\omega$ and $X\in {\cal F}_2$ if and only if $X$ is
isomorphic to $Y\times\omega$, where $Y$ is infinite dimensional and
possesses a continuous norm.
\end{remark}

The following result provides the number of isomorphism classes of
dense countably dimensional linear subspaces of a Fr\'echet space.
From now on, for a topological vector space $X$,
$$
\text{${\cal E}(X)$ the set of dense countably dimensional linear
subspaces of $X$.}
$$

\begin{theorem}\label{grgr11} Let $X\in{\cal F}$.
Then
\begin{itemize}\itemsep-2pt
\item[\rm(\ref{grgr11}.1)] $X\in{\cal F}_0\iff {\cal E}(X)\cap{\cal M}_0\neq\varnothing
\iff{\cal E}(X)\subseteq{\cal M}_0$. Furthermore, if $X\in{\cal
F}_0$, then every $F,G\in{\cal E}(X)$ are isomorphic $($as
topological vector spaces$);$
\item[\rm(\ref{grgr11}.2)]$X\notin{\cal F}_0\iff {\cal E}(X)\cap{\cal
M}_1\neq\varnothing$ and every $E,F\in {\cal E}(X)\cap{\cal M}_1$
are isomorphic.  Furthermore, $X\in{\cal F}_1\iff {\cal
E}(X)\subseteq{\cal M}_1;$
\item[\rm(\ref{grgr11}.3)]$X\in{\cal F}_2\cup{\cal F}_3\iff {\cal E}(X)\cap{\cal
M}_2\neq\varnothing$. If $X\in{\cal F}_3$, then every $E,F\in {\cal
E}(X)\cap{\cal M}_2$ are isomorphic. If $X\in{\cal F}_2$, then
${\cal E}(X)\cap{\cal M}_2$ splits into $($exactly$)$ two
isomorphism classes$;$
\item[\rm(\ref{grgr11}.4)]$X\in{\cal F}_3\iff {\cal E}(X)\cap{\cal
M}_3\neq\varnothing$. If $X\in{\cal F}_3$, then the number of
isomorphism classes in ${\cal E}(X)\cap{\cal M}_3$ is uncountable.
\end{itemize}
\end{theorem}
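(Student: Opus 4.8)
The plan is to attach to each $E\in\mathcal{E}(X)$ two layers of data: a \emph{coarse} invariant, namely the class $\mathcal{M}_j$ containing $E$, which settles every equivalence and every ``whole class is one isomorphism type'' assertion; and a \emph{fine} invariant recording how $E$ sits inside $X$, which is responsible for the count $2$ in (\ref{grgr11}.3) and the uncountable count in (\ref{grgr11}.4). Fix an increasing sequence $\{p_n\}$ of seminorms defining the topology of $X$ and let $\pi_n\colon X\to X_n$ be the canonical map into the $n$-th local Banach space $X_n=X/\overline{\ker p_n}$. I would first record two inheritance facts. \emph{Domination:} $(E\cap\ker p_n)/(E\cap\ker p_{n+1})$ embeds into $\ker p_n/\ker p_{n+1}$, so every seminorm jump of $E$ is bounded by the corresponding jump of $X$, whence an infinite jump of $E$ forces one of $X$. \emph{Density:} each $\pi_n(E)$ is dense in $X_n$, so $\dim\pi_n(E)<\infty$ forces $\dim X_n<\infty$, and $E$ carries a continuous norm precisely when some $p_n$ is injective on $E$. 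Since a subspace of a weak space is weak and a continuous norm restricts to every subspace, the inclusions $\mathcal{E}(X)\subseteq\mathcal{M}_0$ for $X\in\mathcal{F}_0$ and $\mathcal{E}(X)\subseteq\mathcal{M}_1$ for $X\in\mathcal{F}_1$ are immediate. In each remaining equivalence the forward implication follows from the two inheritance facts together with Remark~\ref{MJ} and Remark~\ref{MJ1}, and the reverse from an explicit construction; the representative construction is $\mathcal{E}(X)\cap\mathcal{M}_1\neq\varnothing$ for $X\not\cong\omega$, where, choosing $N$ with $\dim X_N=\infty$, I would build a dense sequence in $X$ whose $\pi_N$-images remain linearly independent, making $p_N$ a norm on its span.

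For the rigidity assertions I would invoke the transitivity of $GL(X)$ on suitable families of dense countable sequences established in \cite{ss}, the non-normable analogue of the Grivaux theorem \cite{gri2} used in Proposition~\ref{prop1}. The mechanism is uniform: to prove $E\cong F$ it suffices to find $\Phi\in GL(X)$ with $\Phi(E)=F$, and \cite{ss} supplies such a $\Phi$ as soon as $E$ and $F$ carry dense independent sequences compatible with the filtration $\{\ker p_n\}$ in the same manner. For (\ref{grgr11}.1) with $X\cong\omega$, compatibility is automatic and all of $\mathcal{E}(X)$ collapses to one class. For the $\mathcal{M}_1$ part of (\ref{grgr11}.2), a continuous norm lets one transport the question to the completion and apply the Banach case verbatim, exactly as in Proposition~\ref{prop1}. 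For the $\mathcal{F}_3$ part of (\ref{grgr11}.3), an $\mathcal{M}_2$ subspace occupies only a finite-dimensional portion of each infinite jump of $X$, and this slack is what makes any two such subspaces $GL(X)$-equivalent.

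The genuine content is the fine count, where $\mathcal{M}_j$ no longer determines the type. For (\ref{grgr11}.3) with $X\in\mathcal{F}_2$ I would use Remark~\ref{MJ1} to fix an isomorphism $X\cong Y\times\omega$ with $Y$ infinite dimensional and normable, set $W=\{0\}\times\omega$, and attach to each $E\in\mathcal{E}(X)\cap\mathcal{M}_2$ an invariant recording the position of its weak part relative to $W$. Membership in $\mathcal{M}_2$ already forces $E\cap W$ to be infinite dimensional, and I expect exactly two possibilities to survive -- the weak part of $E$ being ``full'' or ``deficient'' in $W$ -- with the transitivity results of \cite{ss}, applied separately inside $W$ and across the quotient $X/W\cong Y$, showing that each possibility is a single $GL(X)$-orbit while a direct invariant separates the two. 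This produces exactly two classes and, together with the single $\mathcal{M}_1$ class, recovers the count $3$ for $\ell_2\times\omega$ from the abstract.

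For (\ref{grgr11}.4) the fine invariant must take uncountably many values. Here I would pass to the inverse system $\bigl(\pi_n(E)\subseteq X_n\bigr)_n$ with its connecting surjections. Although each individual $\pi_n(E)$ is, by the density fact and \cite{gri2}, a rigid dense subspace of $X_n$, the way these dense subspaces are glued across the infinitely many infinite jumps of $X\in\mathcal{F}_3$ is a bona fide invariant of the system. I would manufacture a continuum of pairwise non-isomorphic such systems, say by prescribing along a fixed subsequence of infinite jumps a $\{0,1\}$-valued pattern dictating whether $E$ meets successive blocks of $\ker p_n/\ker p_{n+1}$ in a bounded or an unbounded fashion, and arrange that distinct patterns cannot be matched. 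Since any isomorphism between metrizable locally convex spaces sends one fundamental sequence of seminorms to an equivalent one, every \emph{abstract} isomorphism respects the filtration up to cofinal reindexing, which upgrades ``non-isomorphic systems'' to ``non-isomorphic spaces'' and yields uncountably many classes. I expect this last point to be the main obstacle: isolating an invariant of the inverse system that is at once fine enough to assume uncountably many values and robust enough to be preserved by \emph{every} topological isomorphism, not merely by $GL(X)$-conjugation. The $\mathcal{F}_2$ dichotomy of (\ref{grgr11}.3) is the smallest nontrivial instance of the same difficulty, with the invariant degenerating to a single bit.
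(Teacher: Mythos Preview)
Your outline for (\ref{grgr11}.1)--(\ref{grgr11}.3) matches the paper's strategy closely. The transitivity results from \cite{ss} do the work you describe, and your ``full versus deficient'' dichotomy for $E\cap W$ in the ${\cal F}_2$ case is exactly the paper's invariant: one checks whether $\omega/\overline{E_0}$ is finite or infinite dimensional, where $E_0=\{u\in\omega:(0,u)\in E\}$. The separating argument you leave unspecified is that $X/\overline{E_0}$ possesses a continuous norm in the ``full'' case but not in the ``deficient'' one, and this property is carried by any isomorphism.

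The genuine gap is (\ref{grgr11}.4). Your proposed invariant---a $\{0,1\}$ pattern recording whether $E$ meets each block $\ker p_n/\ker p_{n+1}$ in a ``bounded or unbounded'' fashion---does not do the job, and you correctly flag this as the main obstacle without resolving it. The difficulty is twofold. First, cofinal reindexing of the seminorm filtration identifies far too many patterns: any combinatorial invariant built from finitely many bits per level will collapse under such reindexing to at most countably many equivalence classes. Second, and more seriously, one can arrange $E\in{\cal E}(X)\cap{\cal M}_3$ with \emph{every} jump $E\cap\ker p_n/E\cap\ker p_{n+1}$ infinite dimensional, so the pattern is identically $1$; yet such $E$ already account for uncountably many isomorphism types, so the pattern cannot be the invariant.

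The paper's actual invariant is analytic rather than combinatorial. For $r\in\Omega$ (decreasing positive sequences with $\{nr_n\}$ bounded) one defines a nuclearity-type class ${\cal F}_{[r]}\subseteq{\cal F}$ via a decay condition on the approximation numbers $\alpha_n(q,p,{\rm Id}_X)$, and this class is closed under passing to closed subspaces. The argument is then a \emph{diagonalization}, not a direct continuum construction: given countably many $E_k\in{\cal E}(X)\cap{\cal M}_3$, one first chooses $r$ so fast-decaying that $\overline{E_k\cap\ker p_n}\notin{\cal F}_{[r]}$ for all $k,n$ (Lemma~\ref{fufu}), then builds a closed $Y\subseteq\ker p_1$ with $Y\in{\cal F}_{[r]}$ and $Y\cap\ker p_n$ infinite dimensional for all $n$ (Lemma~\ref{cuddly}), and finally takes $E$ with $\overline{E\cap\ker p_1}=Y$. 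Any isomorphism $T\colon E_k\to E$ extends to $GL(X)$ and forces $\overline{E_k\cap\ker p_m}$ to embed in $Y$ for some $m$, hence to lie in ${\cal F}_{[r]}$, contradicting the choice of $r$. This approximation-number machinery is the missing idea in your proposal.
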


Note that isomorphism classes in ${\cal E}(X)$ are in the natural
one-to-one correspondence with the orbits in ${\cal E}(X)$ under the
natural action of $GL(X)$. For instance, (\ref{grgr11}.2) implies
that $GL(X)$ acts transitively on ${\cal E}(X)\cap{\cal M}_1$. The
above theorem will be proved in subsequent sections. Right now, we
derive the following corollary.

\begin{corollary}\label{grgr1} For $X\in{\cal F}$ let $\nu(X)$ be the number
of isomorphism classes of spaces in ${\cal E}(X)$. Then $\nu(X)=1$
if $X\in{\cal F}_0\cup{\cal F}_1$, $\nu(X)=3$ if $X\in{\cal F}_2$
and $\nu(X)$ is uncountable if $x\in{\cal F}_3$.
\end{corollary}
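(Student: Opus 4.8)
The plan is to deduce the count purely from Theorem~\ref{grgr11} by a bookkeeping argument across the four disjoint classes ${\cal M}_j$. First I would record two structural facts. Since $X$ is separable and infinite dimensional, the linear span of a countable dense subset is dense and, being dense, cannot be finite dimensional, so it is a dense countably dimensional subspace; hence ${\cal E}(X)\neq\varnothing$. Next, every $E\in{\cal E}(X)$ is an infinite dimensional (being dense in the infinite dimensional $X$) metrizable locally convex space, so $E\in{\cal M}$, and therefore $E$ lies in exactly one of the four disjoint subclasses ${\cal M}_0,\dots,{\cal M}_3$. This gives the partition
$$
{\cal E}(X)=\bigsqcup_{j=0}^{3}\bigl({\cal E}(X)\cap{\cal M}_j\bigr).
$$

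The crucial observation making the count additive is that the classes ${\cal M}_j$ are invariant under topological isomorphism. This follows from the intrinsic descriptions in Remark~\ref{MJ}: membership in ${\cal M}_0$ (weak topology), ${\cal M}_1$ (existence of a continuous norm), ${\cal M}_2$ (existence of an infinite dimensional closed subspace with weak topology whose infinite dimensional quotient is normable) and ${\cal M}_3$ are each preserved by isomorphisms. Consequently no space from ${\cal M}_i$ is isomorphic to a space from ${\cal M}_j$ when $i\neq j$, so any isomorphism class of ${\cal E}(X)$ is entirely contained in a single piece ${\cal E}(X)\cap{\cal M}_j$. Therefore $\nu(X)$ equals the sum, over $j$, of the number of isomorphism classes inside each nonempty piece.

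It then remains to feed in Theorem~\ref{grgr11} case by case. If $X\in{\cal F}_0$, the middle equivalence and the last clause of (\ref{grgr11}.1) give ${\cal E}(X)\subseteq{\cal M}_0$ with all members isomorphic, so $\nu(X)=1$. If $X\in{\cal F}_1$, the last clause of (\ref{grgr11}.2) gives ${\cal E}(X)\subseteq{\cal M}_1$ while its first clause makes all members of ${\cal E}(X)\cap{\cal M}_1$ isomorphic, so again $\nu(X)=1$. If $X\in{\cal F}_2$, then (\ref{grgr11}.1) forces ${\cal E}(X)\cap{\cal M}_0=\varnothing$ and (\ref{grgr11}.4) forces ${\cal E}(X)\cap{\cal M}_3=\varnothing$, while (\ref{grgr11}.2) contributes one class from the nonempty ${\cal E}(X)\cap{\cal M}_1$ and (\ref{grgr11}.3) contributes exactly two classes from ${\cal E}(X)\cap{\cal M}_2$, giving $\nu(X)=1+2=3$. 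If $X\in{\cal F}_3$, then ${\cal E}(X)\cap{\cal M}_0=\varnothing$ again, the pieces ${\cal E}(X)\cap{\cal M}_1$ and ${\cal E}(X)\cap{\cal M}_2$ each contribute a single class (the latter by (\ref{grgr11}.3)), and (\ref{grgr11}.4) already provides uncountably many isomorphism classes in ${\cal E}(X)\cap{\cal M}_3$; hence $\nu(X)$ is uncountable.

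Since the argument is essentially a tabulation of Theorem~\ref{grgr11}, I expect no serious obstacle; the only point that genuinely needs care is the isomorphism invariance of the classes ${\cal M}_j$, which is what guarantees that an isomorphism class of ${\cal E}(X)$ never straddles two pieces of the partition, so that the individual counts simply add.
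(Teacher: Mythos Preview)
Your proposal is correct and follows essentially the same route as the paper's own proof: both derive the count by partitioning ${\cal E}(X)$ according to the classes ${\cal M}_j$ and reading off the number of isomorphism classes in each piece from Theorem~\ref{grgr11}. You are simply more explicit than the paper about why ${\cal E}(X)\neq\varnothing$ and why isomorphism classes cannot straddle two pieces (isomorphism invariance of the ${\cal M}_j$), but these are the same implicit ingredients the paper relies on.
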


\begin{proof} We use Theorem~\ref{grgr11}. From (\ref{grgr11}.1) it follows that
$\nu(X)=1$ if $X\in{\cal F}_0$. According to (\ref{grgr11}.2),
$\nu(X)=1$ if $X\in{\cal F}_1$. By (\ref{grgr11}.2) and
(\ref{grgr11}.3), $\nu(X)=3$ if $X\in{\cal F}_2$. Indeed, in the
notation of Theorem~\ref{grgr11}, ${\cal E}=({\cal E}\cap {\cal
M}_1)\cup({\cal E}\cap {\cal M}_2)$ with ${\cal E}\cap {\cal M}_1$
providing one isomorphism class and ${\cal E}\cap {\cal M}_2$
consisting of two isomorphism classes. Finally, (\ref{grgr11}.4)
implies that $\nu(X)$ is uncountable if $X\in {\cal F}_3$.
\end{proof}

\begin{remark}\label{re1} Theorem~\ref{grgr11} completes nicely the
main result of \cite{ss}, which says that for $X\in{\cal F}$,
$GL(X)$ acts transitively on the set of dense countable linearly
independent subsets of $X$ if and only if $X\in{\cal F}_1$. Note
also that $\nu(X)$ (defined as in Corollary~\ref{grgr1}) does not
exceed $2^{\aleph_0}$ for every $X\in{\cal F}$. We conjecture that
$\nu(X)=2^{\aleph_0}$ for each $X\in{\cal F}_3$.
\end{remark}

Contrary to the common perception, there is an abundance of complete
topological vector spaces of countable dimension.

\begin{theorem}\label{main} There is a complete locally convex
topological vector space $X$ of countable algebraic dimension such
that $X$ does not have the invariant subspace property.
\end{theorem}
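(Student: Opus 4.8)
The plan is to build the space in exactly the shape that Remark~\ref{phirem} can exploit. Thus it suffices to produce a complete locally convex space $X$ of countable algebraic dimension, an operator $T\in L(X)$ and a vector $x\in X$ with $X=\spann(O(T,x))$ such that the point spectrum of the complexified dual operator $T'$ is empty; Remark~\ref{phirem} then shows that $T$ has no non-trivial closed invariant subspace, so $X$ does not have the invariant subspace property. I would fix once and for all a Hamel basis $\{e_n\}_{n\in\Z_+}$ of a countably dimensional vector space $X$, let $T$ be the forward shift $Te_n=e_{n+1}$, and take $x=e_0$. Then $O(T,x)=\{e_n:n\in\Z_+\}$ is a Hamel basis, so $X=\spann(O(T,x))$ holds for free and $X$ is orbital by construction. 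The eigenvalue equation $T'f=\lambda f$ reads $f(e_{n+1})=\lambda f(e_n)$ and forces $f(e_n)=f(e_0)\lambda^n$, so the only possible eigenvectors of $T'$ are the geometric functionals $g_\lambda\colon e_n\mapsto\lambda^n$. Hence $T'$ has empty point spectrum if and only if no $g_\lambda$ with $\lambda\in\C$ is continuous on the complexification of $X$. Everything therefore reduces to choosing a \emph{complete} locally convex topology $\tau$ on $X=\spann\{e_n\}$ which makes $T$ continuous and keeps every $g_\lambda$ discontinuous.

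Two observations fix the shape of $\tau$ and reveal the difficulty. First, a countably infinite dimensional complete locally convex space cannot be metrizable: a Fr\'echet space of countable dimension would be a countable union of nowhere dense finite dimensional subspaces, contradicting the Baire category theorem (this is also why the obvious candidate $\phi$, carrying the finest locally convex topology, is the wrong one: by Proposition~\ref{prop2} it has the invariant subspace property). So $\tau$ must come from an uncountable directed family of seminorms, equivalently from a family of equicontinuous subsets of the algebraic dual. Second, completeness is best tracked through Grothendieck's completeness theorem: the completion of $(X,\tau)$ is the space of those linear functionals on $X'$ whose restriction to each equicontinuous set is $\sigma(X',X)$-continuous, with $X$ sitting inside by evaluation; thus $(X,\tau)$ is complete precisely when every such functional is evaluation at a point of $X$. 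Continuity of $T$ means that the backward shift maps $X'$ into itself, and a short calculation shows that iterating the backward shift on any nonzero finitely supported functional eventually produces the coordinate functional $e_0^*=g_0$. Hence already excluding the eigenvalue $0$ forces $X'$ to contain \emph{no} nonzero finitely supported functional --- a strong constraint pushing $\tau$ away from the finest topology, in direct tension with the fineness that completeness requires. Note moreover that refining $\tau$ only enlarges $X'$, so the construction must never \emph{add} a $g_\lambda$; the starting dual must already be free of them.

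With this in hand I would construct $\tau$ by transfinite recursion of length $2^{\aleph_0}$, producing an increasing family of seminorms. Enumerate all the obstructions to be destroyed: on one side all candidate non-evaluation Grothendieck functionals, on the other all functionals $g_\lambda$, $\lambda\in\C$. Starting from a suitable shift-compatible weak topology whose dual is point-separating yet contains no $g_\lambda$, at stage $\alpha$ I would enlarge the current directed family by adjoining finitely many new seminorms, closed up under the operation $p\mapsto p\circ T$ (so that $T$ stays continuous), which destroy the $\alpha$-th obstruction --- either by exhibiting an equicontinuous set on which the offending Grothendieck functional fails to be $\sigma(X',X)$-continuous, thereby expelling it from the completion, or by ensuring that the relevant $g_\lambda$ stays outside the enlarged dual --- all the while never admitting a previously excluded $g_\lambda$ nor resurrecting a previously expelled limit point. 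At limit stages one passes to the topology generated by the union of the seminorms produced so far. If every obstruction can indeed be handled, then in the end $X'$ contains no $g_\lambda$ and the completion of $X$ adds no new point, that is, $(X,\tau)$ is complete.

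The hard part is precisely the compatibility of the three demands at each stage. Completeness pushes $\tau$ to be as fine as possible so that external Grothendieck functionals cannot survive on every equicontinuous set, whereas excluding all $g_\lambda$ (in particular all finitely supported functionals) pushes $\tau$ to stay coarse in the corresponding directions, and the shift-closure of the seminorm family rigidly couples the seminorms along the orbit. The crux of the argument is to show that at every stage one can still add enough seminorms to capture the next missing Cauchy limit without ever rendering a forbidden functional continuous or breaking the shift compatibility; the transfinite bookkeeping that makes this scheduling possible is where the main work lies. Once the recursion terminates, $(X,\tau)$ is a complete countably dimensional locally convex space, $T$ is continuous with $X=\spann(O(T,x))$, and $T'$ has empty point spectrum, so Remark~\ref{phirem} completes the proof. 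Finally, re-running the same recursion with the extra requirement that every basic neighbourhood of every point contain some $e_n$ forces the orbit $\{e_n\}$ to be dense, making $e_0$ a hypercyclic vector; under the Continuum Hypothesis this stronger scheduling goes through, the resulting $X$ is strongly orbital, and Lemma~\ref{orbi} then yields the sharper conclusion that $X$ even fails the invariant subset property.
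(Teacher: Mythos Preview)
Your proposal is a plan rather than a proof: you yourself name ``the crux of the argument'' and then leave it undone. The transfinite recursion has a moving target --- the Grothendieck functionals to be expelled are functionals on $X'$, and $X'$ grows at every step, so the obstruction list cannot be fixed in advance --- and you supply no mechanism showing that at an arbitrary stage one can enlarge the seminorm family so as to kill a prescribed limit point while keeping every $g_\lambda$ discontinuous and remaining closed under $p\mapsto p\circ T$. Even the base case is nontrivial: a starting Hausdorff locally convex topology on $X$ whose dual is backward-shift-invariant yet contains no $g_\lambda$ (hence, by your own observation, no coordinate functional $e_n^*$) must be exhibited, and you do not exhibit one.

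The paper avoids the recursion altogether. It realises the pair $(X,T)$ as the hyperplane $L^0_{\Z_\tau}$ of the free locally convex space over $(\Z,\tau)$ together with the linear extension $S_f$ of the shift $f(n)=n{+}1$, where $\tau$ is a regular topology on $\Z$ built in Lemma~\ref{m2} by transporting to $\Z$ the subspace topology of an orbit $\{T^nx:n\in\Z\}$ of a weakly hypercyclic bilateral weighted shift on $(\ell_2(\Z),\sigma)$. Completeness then falls out of Uspenskii's criterion (Proposition~\ref{prop3}): $L^0_{\Z_\tau}$ is complete because $\Z_\tau$ has no nontrivial convergent sequences. Your eigenvalue condition ``no $g_\lambda$ is continuous'' becomes the concrete statement that $n\mapsto z^n$ is discontinuous on $\Z_\tau$ for every $z\in\C\setminus\{0,1\}$, verified directly in Lemma~\ref{m2}; Lemma~\ref{m1} then finishes exactly along the lines of your Remark~\ref{phirem} reduction. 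A transfinite induction of the flavour you sketch \emph{is} what the paper uses for Theorem~\ref{main1} (under CH, with the hard inductive step isolated as Lemma~\ref{instru}), but for Theorem~\ref{main} the free-space route delivers a CH-free proof with no transfinite bookkeeping at all.
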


In other words, Theorem~\ref{main} provides a complete locally
convex topological vector space $X$ with $\dim X=\aleph_0$ and $T\in
L(X)$ such that $T$ does not have non-trivial closed invariant
subspaces. Assuming the Continuum Hypothesis, we can go even
further.

\begin{theorem}\label{main1} Under the assumption of the Continuum
Hypothesis, there is a complete locally convex topological vector
space $X$ of countable algebraic dimension such that $X$ is strongly
orbital and therefore does not have the invariant subset property.
\end{theorem}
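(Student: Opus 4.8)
The plan is to realize $X$ as a fixed countably infinite dimensional vector space $V$, equipped with an elaborately constructed locally convex topology $\tau$. Since a complete metrizable countably dimensional locally convex space is finite dimensional (by the Baire category theorem), the topology $\tau$ we are after must be non-metrizable, and hence cannot be generated by countably many seminorms. This is precisely where the Continuum Hypothesis enters: it lets us run a transfinite construction of length $\omega_1=2^{\aleph_0}$, at each stage adjoining a new seminorm. I would first fix $V$ together with an operator $T$ and a vector $x$ for which the orbit $\{T^nx:n\in\Z_+\}$ is a Hamel basis of $V$; this makes $\spann(O(T,x))=V$ hold automatically and independently of $\tau$, so that the only analytic requirements left are that $T\in L(V,\tau)$ and that $x$ be $\tau$-hypercyclic.

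For the seed topology I would embed $V$ as a dense subspace of a separable infinite dimensional Fr\'echet space $B$ carrying a hypercyclic operator $\widetilde T$ that extends $T$ and for which $x$ is a hypercyclic vector; the subspace topology $\tau_0$ then already makes $x$ hypercyclic, but its completion is all of $B$, so $(V,\tau_0)$ is very far from complete. The heart of the argument is to refine $\tau_0$ to a finer locally convex topology $\tau$ under which $V$ becomes complete, \emph{without} destroying either the continuity of $T$ or the density of the orbit. Refining the topology shrinks the completion, so the idea is to adjoin seminorms that successively excise the spurious limit points. Under CH one enumerates the $\aleph_1$ points of $B\setminus V$ as $\{z_\alpha:\alpha<\omega_1\}$ and builds an increasing chain $\tau_0\subseteq\tau_\alpha$ of locally convex topologies on $V$ so that, at stage $\alpha$, a single seminorm is added which keeps $T$ continuous, keeps the orbit dense, and removes $z_\alpha$ from the completion of $(V,\tau_{\alpha+1})$. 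The crucial one-step lemma asserts that, given only countably many constraints already in force, there is always enough freedom to choose such a seminorm: continuity of $T$ amounts to a recurrence of the form $p\circ T\leq C_p\,q$ for some already-present $q$, and density of the orbit only constrains $p$ on a countable set, leaving room to make the chosen Cauchy net at $z_\alpha$ non-Cauchy.

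Finally I would take $\tau$ to be the supremum of the topologies $\tau_\alpha$, generated by the union of all the seminorms produced along the way. Because $\tau\supseteq\tau_0$, every $\tau$-Cauchy filter is $\tau_0$-Cauchy and therefore converges to a point of $B$; granting that the transfinite bookkeeping succeeds, each would-be limit in $B\setminus V$ has been excised and no new ones are created, so the completion of $(V,\tau)$ is exactly $V$. Together with the continuity of $T$ and the $\tau$-density of $O(T,x)$, which are arranged to persist through the limit stage, this shows that $X=(V,\tau)$ is a complete, countably dimensional, strongly orbital locally convex space; Lemma~\ref{orbi} then gives that $X$ does not have the invariant subset property.

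The main obstacle is the genuine tension between the two competing demands: every seminorm we add to force completeness makes $\tau$ finer and threatens to pull the orbit away from points it is supposed to approximate, i.e.\ to kill hypercyclicity, while adding too few seminorms leaves spurious limit points in the completion. Worse, refining the topology can \emph{split} a single limit point of $B$ into several distinct points of the finer completion, or create new limit points sitting over points of $B$ that already belong to $V$; so the construction must be organised to pin the final completion down to exactly $V$ rather than merely to remove $B\setminus V$. Controlling this at the limit stage $\alpha=\omega_1$ — ensuring that the diagonal of the $\omega_1$-chain neither resurrects killed points nor manufactures new ones — is the delicate part, and it is exactly here that the cardinality bound $\aleph_1=2^{\aleph_0}$ supplied by CH makes the bookkeeping close.
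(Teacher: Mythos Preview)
Your architecture is exactly the paper's: take $E$ to be the finitely supported sequences inside $\omega$, use Theorem~\ref{th1a} to get $T\in L(\omega)$ and a hypercyclic $u\in E$ with $E=\spann(O(T,u))$, enumerate $\omega\setminus E$ as $\{x_\alpha\}_{\alpha<\aleph_1}$ via CH, and run a length-$\omega_1$ induction adjoining seminorms that excise each $x_\alpha$ while preserving density of the orbit.

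The gap is precisely the point you yourself flag as ``the delicate part'': you do not say how to stop the refined completion from growing new limit points (your ``splitting'' worry), and without this the argument does not close. The paper's device is concrete. All seminorms are drawn from a fixed class $\mathcal P$ of the form $p_v(x)=\sum_n|\langle x,v_n\rangle|$ with $\{v_n\}\subset E$ having point-finite supports. Two features matter. First, the ball $\{x\in E:p_v(x)\le1\}$ is $\tau_0$-closed in $E$; a standard criterion (Lemma~\ref{comple}) then says $(E,\tau)$ is complete as soon as every $\tau$-Cauchy net has its $\omega$-limit already in $E$, so no splitting can occur and the only candidate limit points are those of $\omega\setminus E$. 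Second, $p_v$ extends to a $[0,\infty]$-valued function on all of $\omega$; one excises $y\in\omega\setminus E$ by choosing $v$ with $\sum_n|\langle y,v_n\rangle|=\infty$, which rules $y$ out as the $\omega$-limit of any $\tau$-Cauchy net in one stroke (Lemma~\ref{compl}). Continuity of $T$ is arranged not by your domination scheme $p\circ T\le C_p\,q$ but by making each $\mathcal P_\alpha$ $T$-invariant: whenever $p_v$ is adjoined, so are all $p_v\circ T^k$, and these again lie in $\mathcal P$ because the matrix of $T$ is both row- and column-finite. The one-step lemma (Lemma~\ref{instru}) then interleaves the choice of the coordinates $v_k$ (each supported at a single index in $\supp(y)$) with the selection of a subsequence $T^{n_j}u$ approximating a fixed countable dense set in all the countably many seminorms already in force together with the partially built $p_v^{[k]}$.
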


\section{Auxiliary results}

Recall that a subset $D$ of a locally convex space $X$ is called a
{\it disk} if $D$ is bounded, convex and balanced (=is stable under
multiplication by any $\lambda\in\K$ with $|\lambda|\leq1$). The
symbol $X_D$ stands for the space $\spann(D)$ endowed with the norm
$\pd$ being the Minkowskii functional of the set $D$. Boundedness of
$D$ implies that the topology of $X_D$ is stronger than the one
inherited from $X$. A disk $D$ in $X$ is called a {\it Banach disk}
if the normed space $X_D$ is complete. It is well-known that a
sequentially complete disk is a Banach disk, see, for instance,
\cite{bonet}. In particular, a compact or sequentially compact disk
is a Banach disk.

The following result features as Lemma~4.1 in \cite{ss}.

\begin{lemma}\label{l11} Let $\{x_n\}_{n\in\Z_+}$ be a convergent to $0$ sequence
in a sequentially complete locally convex space $X$. Then the set
$D=\Bigl\{\sum\limits_{n=0}^\infty a_nx_n:a\in\ell_1,\ \|a\|_1\leq
1\Bigr\}$ is a Banach disk. Moreover, $E=\spann\{x_n:n\in \Z_+\}$ is
a dense linear subspace of the Banach space $X_D$.
\end{lemma}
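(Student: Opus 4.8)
The plan is to prove that the set
$$
D=\Bigl\{\sum_{n=0}^\infty a_nx_n:a\in\ell_1,\ \|a\|_1\leq 1\Bigr\}
$$
is a Banach disk, and that $E=\spann\{x_n:n\in\Z_+\}$ is a dense subspace of $X_D$.

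First I would verify that $D$ is well-defined, i.e.\ that each series $\sum a_nx_n$ converges in $X$. Since $x_n\to0$, the sequence $\{x_n\}$ is bounded, so it is contained in some closed disk; more carefully, for $a\in\ell_1$ the partial sums form a Cauchy sequence because for any continuous seminorm $p$ we have $p\bigl(\sum_{n=N}^{M}a_nx_n\bigr)\leq \sup_k p(x_k)\sum_{n=N}^M|a_n|$, and $\sup_k p(x_k)<\infty$ while the tail of $\|a\|_1$ vanishes. By sequential completeness of $X$ the series converges, so $D\subseteq X$ is genuinely defined. That $D$ is convex and balanced is immediate from the convexity and balancedness of the unit ball of $\ell_1$ together with linearity of $a\mapsto\sum a_nx_n$; boundedness follows from the same seminorm estimate, giving $p(y)\leq\sup_k p(x_k)$ for all $y\in D$.

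The main step is completeness of the normed space $X_D$ with the Minkowski functional $\pd$ as its norm. Here I would use the criterion already quoted in the excerpt: a sequentially complete disk is a Banach disk. So it suffices to show $D$ is sequentially complete in the topology of $X$. The natural approach is to take a $\pd$-Cauchy sequence $\{y^{(k)}\}$ in $X_D$, note that it is then Cauchy in $X$ (since the $X$-topology is weaker), hence converges in $X$ to some $y$ by sequential completeness, and then to identify $y$ as an element of $X_D$ with $\pd(y^{(k)}-y)\to0$. To control the $\ell_1$-coefficients I would exploit that $D$ is the continuous linear image of the closed unit ball $B$ of $\ell_1$ under the map $\Phi(a)=\sum a_nx_n$; a $\pd$-convergent analysis reduces to extracting, via weak$^*$ compactness of $B$ (Banach--Alaoglu, since $\ell_1=c_0'$), a limit of the coefficient sequences representing the $y^{(k)}$, and checking that $\Phi$ sends this weak$^*$ limit to the $X$-limit $y$, placing $y\in D$ and hence in $X_D$.

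For the final assertion, density of $E$ in $X_D$ is the easiest part: each $x_n\in D\subseteq X_D$, so $E\subseteq X_D$, and any $y=\sum a_nx_n\in X_D$ is approximated in $\pd$-norm by its partial sums $\sum_{n=0}^{N}a_nx_n\in E$, since the tail $\sum_{n>N}a_nx_n$ lies in $\bigl(\sum_{n>N}|a_n|\bigr)D$, so its $\pd$-norm is at most $\sum_{n>N}|a_n|\to0$. The hardest part I expect to be the identification of the $X$-limit of a $\pd$-Cauchy sequence as lying in $X_D$ with the correct norm, i.e.\ ruling out that completeness fails through coefficients ``escaping'' in $\ell_1$; the weak$^*$ compactness argument is designed precisely to close that gap, and one must check continuity of $\Phi$ from $(B,\text{weak}^*)$ into $X$ carefully, using that the $x_n$ tend to $0$ so that finitely supported functionals suffice to test convergence.
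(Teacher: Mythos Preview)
The paper does not prove this lemma; it is simply quoted as Lemma~4.1 from \cite{ss}, so there is no proof in the present paper to compare your proposal against.

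Your outline is essentially correct, but the middle paragraph conflates two approaches. Having announced that you will show $D$ is sequentially complete in the topology of $X$ (and then invoke the cited criterion), you instead start with a $\pd$-Cauchy sequence, which is the \emph{direct} route to completeness of $X_D$. The cleaner execution of your own idea is to stay with the first plan: take an $X$-Cauchy sequence $\{y^{(k)}\}$ in $D$, write $y^{(k)}=\Phi(a^{(k)})$ with $\|a^{(k)}\|_1\leq 1$, use weak$^*$ compactness (and metrizability, since $c_0$ is separable) of the unit ball of $\ell_1$ to pass to a subsequence $a^{(k_j)}\to a$ weak$^*$, verify (exactly as you describe, using $x_n\to0$) that $\Phi$ is weak$^*$-to-$X$ sequentially continuous on that ball, conclude $y^{(k_j)}\to\Phi(a)\in D$ in $X$, and hence the whole $X$-Cauchy sequence converges in $X$ to a point of $D$. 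This shows $D$ is sequentially complete (indeed compact) in $X$, and the criterion finishes the job. If you insist on the $\pd$-Cauchy route, beware that the representation $y=\Phi(a)$ need not be unique, so you cannot infer that $\|a^{(k)}-a^{(l)}\|_1$ is small from $\pd(y^{(k)}-y^{(l)})$ being small; your weak$^*$ argument still rescues this, but it must be organized as above rather than as a limit of ``the'' coefficient sequences. Your density argument for $E$ in $X_D$ is fine as written.
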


We say that a continuous seminorm $p$ on a locally convex space $X$
is {\it non-trivial} if
$$
\ker p=\{x\in X:p(x)=0\}
$$
has infinite codimension in $X$. If $p$ is a continuous seminorm on
a locally convex space $X$, we say that $A\subset X$ is $p$-{\it
independent} if $p(z_1a_1+{\dots}+z_na_n)\neq 0$ for any $n\in\N$,
any pairwise different $a_1,\dots,a_n\in A$ and any non-zero
$z_1,\dots,z_n\in\K$. In other words, vectors $x+\ker p$ for $x\in
A$ are linearly independent in $X/\ker p$.

\begin{lemma}\label{l22} Let $X$ be a Fr\'echet space,
$\{X_j\}_{j\in\N}$ be a decreasing sequence of closed linear
subspaces of $X$ and for each $j\in\N$, let $A_j$ and $B_j$ be dense
countable subsets of $X_j$. Then there is a Banach disk $D$ in $X$
such that for every $j\in\N$, both $A_j$ and $B_j$ are dense subsets
of the Banach space $(X_D\cap X_j,p_D)$.
\end{lemma}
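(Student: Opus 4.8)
The plan is to reduce the statement to the construction of a single null sequence and then to build that sequence by an inductive, diagonal bookkeeping. By Lemma~\ref{l11} it suffices to produce a sequence $\{x_n\}_{n\in\N}$ converging to $0$ in $X$ such that the Banach disk $D=\{\sum_n a_nx_n:\|a\|_1\leq1\}$ has the required property; completeness of $X_D$ then comes for free. Fix a translation invariant metric (an $F$-norm $\|\cdot\|$) defining the topology of $X$. The two containments $A_j,B_j\subseteq X_D\cap X_j$ are the easy part: I would arrange that every element $c$ of the countable set $C=\bigcup_j(A_j\cup B_j)$ occurs as a positive scalar multiple $t\,x_n$ of some term of the sequence, so that $c\in\spann\{x_n\}\subseteq X_D$; since $c$ lies in the relevant $X_j$ by hypothesis, $c\in X_D\cap X_j$. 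Because the $X_j$ are closed in $X$ and the $p_D$-topology is finer, each $X_D\cap X_j$ is $p_D$-closed, hence $\overline{A_j}^{\,p_D}$ and $\overline{B_j}^{\,p_D}$ are automatically contained in $X_D\cap X_j$. Thus the whole content of the lemma is the reverse inclusions $X_D\cap X_j\subseteq\overline{A_j}^{\,p_D}$ and $X_D\cap X_j\subseteq\overline{B_j}^{\,p_D}$ for every $j$; these are exactly the two density assertions.

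To get these I would exploit the nesting $X_1\supseteq X_2\supseteq\cdots$ and build the generators level by level. All generators chosen ``at level $j$'' will be differences (and scalar multiples) of elements of $A_j\cup B_j$, so that they lie in $X_j$ and are therefore available inside $X_D\cap X_i$ for every $i\leq j$. The generators are produced by an induction along a surjection $\N\to\{(j,k)\}$ that repeatedly visits every level $j$ with every precision $1/k$. At each step I maintain a finite generating set, the associated partial Minkowski functional and a finite coordinate system; the new generator is selected, using the Fr\'echet density of $A_j$ and of $B_j$ in $X_j$, so as to (i) refine the coordinate images of $A_j$ and of $B_j$ into a set that is $\ell_1$-dense among the coordinate images of the elements of $X_j$ represented so far, and (ii) be so small in $\|\cdot\|$ that the sequence stays null and that adjoining it perturbs the already-computed values of $p_D$ by less than $2^{-k}$. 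Interleaving $A_j$ and $B_j$ at each level is what lets both families come out dense simultaneously: every $p_D$-convergent combination of generators that happens to land in $X_j$ is, by the coordinate density, an $\ell_1$-limit of coordinate vectors coming from $A_j$ and, separately, from $B_j$, and this translates into $p_D$-approximation of the given element by elements of $A_j$ and of $B_j$.

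The main obstacle is controlling the disk from above, i.e. guaranteeing the reverse inclusions so that $X_D\cap X_j$ acquires no element that $A_j$ (or $B_j$) cannot $p_D$-approximate. Two difficulties combine. First, elements of $X_D$ are genuine infinite $p_D$-convergent series $\sum_nc_nx_n$, so the approximation must survive passage to the limit; this is handled by the smallness requirement (ii), which makes the tails $p_D$-negligible and keeps the partial coordinate data meaningful in the limit. Second, and more seriously, generators introduced at levels $i<j$ need not lie in $X_j$, yet a clever $p_D$-convergent combination of them could cancel into $X_j$ and produce an element of $X_D\cap X_j$ with no visible $A_j$-approximant. To rule this out I would choose the lower-level generators to be quantitatively independent modulo $X_j$ --- their images in the Fr\'echet quotient $X/X_j$ forming a family whose absolutely convergent combinations vanish only trivially --- so that any representation of an element of $X_j$ forces the lower-level coefficients to be $p_D$-negligible. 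Establishing this independence uniformly in $j$, while simultaneously meeting the coordinate-density and smallness demands for all levels and for both families $A_j,B_j$, is the delicate core of the induction; once it is in place, the reverse inclusions, and hence the density of $A_j$ and $B_j$ in $(X_D\cap X_j,p_D)$, follow.
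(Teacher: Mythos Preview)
Your reduction to Lemma~\ref{l11} and your identification of the reverse inclusions $X_D\cap X_j\subseteq\overline{A_j}^{\,p_D}$ (and the companion for $B_j$) as the heart of the matter are both correct and match the paper. However, the paper's construction is far simpler than what you outline, and the mechanism you propose for the hard step does not work as written.

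The device you are missing is this. For each $j$, let $C_j$ be the set of rational linear combinations of $A_j\cup B_j$ and enumerate it via a map $f_j:\N\to C_j$ hitting every point infinitely often. Using Fr\'echet density of $A_j$ and $B_j$ in $X_j$, pick $\alpha_j(m)\in A_j$ and $\beta_j(m)\in B_j$ so close to $f_j(m)$ that the vectors $2^m\bigl(f_j(m)-\alpha_j(m)\bigr)$ and $2^m\bigl(f_j(m)-\beta_j(m)\bigr)$ still tend to $0$ in $X$. Throw these \emph{scaled-up differences}, together with scaled-down copies $\gamma_{j,m}x_{j,m}$, $\gamma_{j,m}y_{j,m}$ of all elements of all $A_j$, $B_j$, into a single null sequence and let $D$ be the disk of Lemma~\ref{l11}. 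Because each $2^m\bigl(f_j(m)-\alpha_j(m)\bigr)$ is itself one of the generators, it lies in $D$, and therefore $p_D\bigl(f_j(m)-\alpha_j(m)\bigr)\leq 2^{-m}$. Thus every $c\in C_j$ is automatically a $p_D$-limit of elements of $A_j$ (and, symmetrically, of $B_j$). Given that $C_j$ is $p_D$-dense in $X_D\cap X_j$, the density of $A_j$ and of $B_j$ follows at once. There is no induction, no finite ``coordinate system'' to maintain, and no balancing of smallness against density beyond the one-line choice of $\alpha_j(m)$, $\beta_j(m)$.

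Your proposed cure for the ``second difficulty'' --- forcing the lower-level generators to be quantitatively independent modulo each $X_j$ so that no $\ell_1$-combination of them can cancel into $X_j$ --- is in direct conflict with your own first step. You have already committed to putting a scalar multiple of \emph{every} element of every $A_i$ into the null sequence, and the sets $A_i$ are given to you; nothing prevents individual elements of $A_i$, let alone finite linear combinations of them, from lying in $X_j$ for $j>i$. You therefore cannot arrange the independence you want while still securing $A_i\subseteq X_D$. The paper does not try to keep such vectors out of $X_D\cap X_j$; instead it makes $A_j$ rich enough, via the scaled-difference trick, to $p_D$-approximate whatever lands there.
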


\begin{proof} Let $d$ be a translation invariant metric defining the
topology of $X$. For each $j\in\N$, let $C_j$ be the set of all
linear combinations of the elements of $A_j\cup B_j$ with rational
coefficients. Obviously, $C_j$ is countable. Pick a map $f_j:\N\to
C_j$ such that $f_j^{-1}(x)$ is an infinite subset of $\N$ for every
$x\in C$. Since $A_j$ and $B_j$ are dense in $X_j$, we can find maps
$\alpha_j:\N\to A_j$ and $\beta_j:\N\to B_j$ such that
$d(2^m(f_j(m)-\alpha_j(m)),0)<2^{-j-m}$ and
$d(2^m(f_j(m)-\beta_j(m)),0)<2^{-j-m}$ for every $m\in\N$. Since
$A_j$ and $B_j$ are countable, we can write $A_j=\{x_{j,m}:m\in\N\}$
and $B_j=\{y_{j,m}:m\in\N\}$. Using metrizability of $X$, we can
find a sequence $\{\gamma_{j,m}\}_{m\in\N}$ of positive numbers such
that $d(\gamma_{j,m} x_{j,m},0)<2^{-j-m}$ and $d(\gamma_{j,m}
y_{j,n})<2^{-j-m}$ for every $m\in\N$. Enumerating the countable set
$$
\bigcup_{j,m\in\N}\{2^m(f_j(m)-\alpha_j(m)),2^m(f_j(m)-\beta_j(m)),\gamma_{j,m}
x_{j,m},\gamma_{j,m} y_{j,m}\}
$$
as one (convergent to 0) sequence and applying Lemma~\ref{l11} to
this sequence, we find that there is a Banach disk $D$ in $X$ such
that $X_D$ contains $A_j$ and $B_j$, the linear span of $A_j\cup
B_j$ is $p_D$-dense in $X_D\cap X_j$ and $f_j(m)-\alpha_j(m)\to 0$
and $f_j(m)-\beta_j(m)\to 0$ as $m\to\infty$ in $X_D$ for each
$j<n$. The $p_D$-density of the linear span of $A_j\cup B_j$ in
$X_D\cap X_j$ implies the $p_D$-density of $C_j$ in $X_D\cap X_j$.
Taking into account that $f_j^{-1}(x)$ is infinite for every $x\in
C_j$ and that $\alpha_j$ takes values in $A_j$, $p_D$-density of
$C_j$ in $X_D\cap X_j$ and the relation $\pd(f_j(m)-\alpha_j(m))\to
0$ imply that $A_j$ is $p_D$-dense in $X_D\cap X_j$. Similarly,
$B_j$ is $p_D$-dense in $X_D\cap X_j$. Thus $D$ satisfies all
required conditions.
\end{proof}

Applying Lemma~\ref{l22} with $X_j=X$, $A_j=A$ and $B_j=B$ for every
$j\in\N$, we obtain the following result.

\begin{lemma}\label{l22pr} Let $A$ and $B$ be dense countable subsets
of a Fr\'echet space $X$. Then there is a Banach disk $D$ in $X$
such that both $A$ and $B$ are dense subsets of the Banach space
$X_D$.
\end{lemma}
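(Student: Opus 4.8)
The plan is to derive Lemma~\ref{l22pr} as a direct specialization of Lemma~\ref{l22}, exactly as the intervening sentence in the excerpt suggests. The only subtlety is that Lemma~\ref{l22} is stated for a \emph{decreasing sequence} of closed subspaces $\{X_j\}_{j\in\N}$ together with dense countable subsets $A_j,B_j$ of each $X_j$, whereas here we are handed a single pair $A,B$ of dense countable subsets of $X$ itself. So the first step is to set $X_j=X$ for every $j\in\N$; this is trivially a decreasing sequence of closed linear subspaces of $X$, since each term equals the whole space. Next I would set $A_j=A$ and $B_j=B$ for every $j$. By hypothesis $A$ and $B$ are dense countable subsets of $X=X_j$, so the hypotheses of Lemma~\ref{l22} are met verbatim.

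Having arranged the data, I invoke Lemma~\ref{l22} to produce a Banach disk $D$ in $X$ with the property that for every $j\in\N$, both $A_j$ and $B_j$ are dense subsets of the Banach space $(X_D\cap X_j,\pd)$. Reading this conclusion back through the identifications $X_j=X$, $A_j=A$, $B_j=B$, it says precisely that $A$ and $B$ are dense subsets of $(X_D\cap X,\pd)=(X_D,\pd)$, which is the Banach space $X_D$. That is exactly the assertion of Lemma~\ref{l22pr}, so the proof is complete.

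I do not expect any genuine obstacle here: the statement is engineered as the ``constant'' case of the more refined Lemma~\ref{l22}, and all the real work (the enumeration argument, the summability estimates forcing the relevant vectors into a common Banach disk, and the $\pd$-density bookkeeping) has already been carried out in the proof of Lemma~\ref{l22}. The only thing to be careful about is the bookkeeping of the identifications, namely that collapsing the sequence $\{X_j\}$ to the constant value $X$ turns the family-indexed density conclusion into the single unindexed density conclusion we want, and that $X_D\cap X$ is literally $X_D$ because $X_D\subseteq X$. These are purely formal checks rather than mathematical difficulties.
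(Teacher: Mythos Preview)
Your proposal is correct and is exactly the paper's own argument: the paper also derives Lemma~\ref{l22pr} by applying Lemma~\ref{l22} with $X_j=X$, $A_j=A$ and $B_j=B$ for every $j\in\N$. The formal checks you flag (that the constant sequence is decreasing and that $X_D\cap X=X_D$) are indeed the only points to note.
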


Applying the above lemma with $A=B$, we obtain the following result.

\begin{corollary}\label{l22a} Let $A$ be a dense countable subset
of a Fr\'echet space $X$. Then there is a Banach disk $D$ in $X$
such that $A$ is a dense subset of the Banach space $X_D$.
\end{corollary}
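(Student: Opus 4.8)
The plan is to obtain this statement as the degenerate case $B=A$ of Lemma~\ref{l22pr}, exactly as the preceding sentence in the excerpt already suggests. First I would observe that a single dense countable subset $A$ of the Fr\'echet space $X$ can be regarded as a \emph{pair} of dense countable subsets, simply by putting $B=A$. The hypotheses of Lemma~\ref{l22pr} are then met trivially: $A$ is dense and countable by assumption, and setting $B=A$ gives a second set that is likewise dense and countable. Invoking Lemma~\ref{l22pr} therefore yields a Banach disk $D$ in $X$ for which both $A$ and $B$ are dense subsets of the Banach space $X_D$. Since $B=A$, the two conclusions coincide, and the desired density of $A$ in $X_D$ follows at once.

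I do not expect any genuine obstacle in this step. All of the real work — constructing a convergent-to-zero sequence whose $\ell_1$-hull is a Banach disk (via Lemma~\ref{l11}) and verifying $p_D$-density of the prescribed sets — has already been carried out in the proof of Lemma~\ref{l22} and inherited by Lemma~\ref{l22pr}. Consequently the only thing that needs checking here is that the specialization $B=A$ is admissible, which is immediate, so the proof reduces to a direct citation of Lemma~\ref{l22pr} with no additional estimate or construction required.
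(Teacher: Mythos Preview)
Your proposal is correct and matches the paper's own proof exactly: the paper also derives the corollary by applying Lemma~\ref{l22pr} with $A=B$.
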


If $p$ is a continuous seminorm on a locally convex space $X$ and
$f\in X'$, we denote
$$
p^*(f)=\sup\{|f(x)|:x\in X,\ p(x)<1\}.
$$
Clearly $X'_p=\{f\in X':p^*(f)<\infty\}$ is a linear subspace of
$X'$ and $p^*$ is a norm on $X'_p$. It is easy to see that
$(X'_p,p^*)$ is a Banach space. The following result is Lemma~5.1 in
\cite{ss}.

\begin{lemma}\label{bububu} Let $p$ be a continuous seminorm on a
locally convex space $X$ and $E$ be a countably dimensional subspace
of $X$ such that $E\cap \ker p=\{0\}$. Then there exist a Hamel
basis $\{u_n\}_{n\in\N}$ in $E$ and a sequence $\{f_n\}_{n\in\N}$ in
$X'_p$ such that $f_n(u_m)=\delta_{n,m}$ for every $m,n\in\N$.
\end{lemma}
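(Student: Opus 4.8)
The plan is to reduce the statement to the construction of a biorthogonal system in a normed space, and then to build such a system by a Gram--Schmidt-type induction based on the Hahn--Banach theorem. First I would pass to the quotient normed space $Y=X/\ker p$, equipped with the norm $\tilde p$ induced by $p$, and let $q\colon X\to Y$ be the quotient map. Since $\tilde p(q(x))=p(x)$ for every $x\in X$, and since $E\cap\ker p=\{0\}$, the restriction $q\bigl|_E$ is a linear isomorphism of $E$ onto $Z:=q(E)$, a countably dimensional subspace of $Y$. Moreover, a functional $f$ belongs to $X'_p$ precisely when it factors as $f=g\circ q$ with $g\in Y^*$: the inequality $p^*(f)<\infty$ forces $f$ to vanish on $\ker p$, while conversely any $\tilde p$-bounded $g$ yields $|f(x)|\le\|g\|\,p(x)$, so that $f$ is continuous on $X$ and $p^*(f)\le\|g\|$. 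Thus it suffices to produce a Hamel basis $\{w_n\}_{n\in\N}$ of $Z$ and bounded functionals $\{g_n\}_{n\in\N}\subset Y^*$ with $g_n(w_m)=\delta_{n,m}$; then $u_n=(q\bigl|_E)^{-1}(w_n)$ and $f_n=g_n\circ q$ will do the job.

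To construct these, I would fix any Hamel basis $\{z_n\}_{n\in\N}$ of $Z$, set $F_n=\spann\{z_1,\dots,z_n\}$, and build $w_n$ and $g_n$ inductively, maintaining at each stage that $g_i(w_j)=\delta_{i,j}$ for $i,j\le n$ and $\spann\{w_1,\dots,w_n\}=F_n$. At the $n$-th step I would put
$$
w_n=z_n-\sum_{i<n}g_i(z_n)\,w_i,
$$
so that $g_j(w_n)=0$ for $j<n$, and, since $z_n\notin F_{n-1}$, also $w_n\notin F_{n-1}$ and $\spann\{w_1,\dots,w_n\}=F_n$. Because $F_{n-1}$ is finite dimensional it is closed in $Y$, so $w_n$ lies at a positive distance from $F_{n-1}$; by the Hahn--Banach theorem there is $g_n\in Y^*$ with $g_n\bigl|_{F_{n-1}}=0$ and $g_n(w_n)=1$. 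As $w_1,\dots,w_{n-1}\in F_{n-1}$, this gives $g_n(w_j)=0$ for $j<n$, completing the inductive step. The resulting system is biorthogonal, hence $\{w_n\}$ is linearly independent, and $\spann\{w_n\}=\bigcup_n F_n=Z$, so $\{w_n\}$ is indeed a Hamel basis of $Z$.

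The routine verifications are that $q\bigl|_E$ is an isometric isomorphism and that the correspondence $f\mapsto g$ between $X'_p$ and $Y^*$ behaves as claimed; both follow at once from the identity $\tilde p(q(x))=p(x)$. The only genuine point is the existence of the \emph{bounded} biorthogonal functionals $g_n$, and this is exactly where the hypothesis $E\cap\ker p=\{0\}$ enters: it makes $\tilde p$ a norm on $Z$ and lets me invoke Hahn--Banach in the normed space $Y$ against the closed finite-dimensional subspace $F_{n-1}$. Pulling everything back through $q$ finally produces a Hamel basis $\{u_n\}$ of $E$ and functionals $f_n\in X'_p$ with $f_n(u_m)=g_n(w_m)=\delta_{n,m}$, as required.
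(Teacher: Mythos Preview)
Your proof is correct. Note that the paper does not actually prove this lemma: it simply records it as Lemma~5.1 of \cite{ss} and moves on. Your argument---passing to the quotient normed space $X/\ker p$, identifying $X'_p$ with its dual via $f\mapsto g$, and then running a Gram--Schmidt/Hahn--Banach induction to build a biorthogonal system for the countably dimensional image of $E$---is the standard route to this statement and is almost certainly what the cited proof does as well. The key observation, which you isolate cleanly, is that $E\cap\ker p=\{0\}$ makes $p$ a genuine norm on $E$, so that at each stage $w_n$ sits at positive distance from the finite-dimensional $F_{n-1}$ and Hahn--Banach produces a \emph{bounded} separating functional; this is precisely what lands the $f_n$ in $X'_p$ rather than merely in the algebraic dual.
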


The following result features as Lemma~4.3 in \cite{ss}.

\begin{lemma}\label{lile} Let $X$ be a separable Fr\'echet space and
$p$ be a non-trivial continuous seminorm on $X$. Then for every
dense countable set $A\subset X$, there is $B\subseteq A$ such that
$B$ is $p$-independent and dense in $X$.
\end{lemma}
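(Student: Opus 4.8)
The plan is to reformulate $p$-independence in the quotient $X/\ker p$ and then build $B$ greedily as a sequence indexed by a countable base of the topology, adjoining one point at a time while preserving linear independence of the quotient images. Recall that a subset of $X$ is $p$-independent exactly when the images of its elements under the quotient map $q:X\to X/\ker p$ are linearly independent, where $X/\ker p$ carries the norm $\|q(x)\|=p(x)$; this is a genuine norm because $\ker p$ is closed and $p$ is constant on cosets, and $q$ is continuous. Thus the entire $p$-independence condition lives in this normed quotient.

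Since $X$ is separable and metrizable, I would fix a countable base $\{U_m\}_{m\in\N}$ of non-empty open subsets of $X$, and construct points $b_m\in A$ by induction on $m$, maintaining two invariants: that $b_m\in U_m$ (which will force density of $B=\{b_m:m\in\N\}$) and that $q(b_1),\dots,q(b_m)$ are linearly independent in $X/\ker p$ (which will force $p$-independence of $B$). At stage $m$, put $V_{m-1}=\spann\{q(b_1),\dots,q(b_{m-1})\}$, a finite dimensional and hence closed subspace of the normed space $X/\ker p$, so that $q^{-1}(V_{m-1})$ is a closed linear subspace of $X$. The crucial point is that this subspace is proper: since $p$ is non-trivial, $\ker p$ has infinite codimension, so $X/\ker p$ is infinite dimensional while its subspace $V_{m-1}$ is finite dimensional; therefore $q^{-1}(V_{m-1})$ has infinite codimension in $X$, and in particular is not all of $X$.

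The key step is the elementary fact that a proper linear subspace of a topological vector space has empty interior: were it to contain a neighborhood of one of its points, then by translation and the absorbing property of neighborhoods of $0$ it would coincide with $X$. Hence $q^{-1}(V_{m-1})$ cannot contain the non-empty open set $U_m$, so $U_m\setminus q^{-1}(V_{m-1})$ is non-empty and open. Density of $A$ then yields a point $b_m\in A\cap\bigl(U_m\setminus q^{-1}(V_{m-1})\bigr)$; by construction $q(b_m)\notin V_{m-1}$, so $q(b_1),\dots,q(b_m)$ remain linearly independent, and in particular the $b_i$ are forced to be pairwise distinct. Once the induction is complete, $B=\{b_m:m\in\N\}\subseteq A$ meets every basic open set $U_m$ and is therefore dense, while the linear independence of all the $q(b_m)$ gives exactly that $B$ is $p$-independent.

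The main obstacle to anticipate is the tension between the two invariants: discarding elements of $A$ in order to gain $p$-independence could in principle destroy density. This is resolved by never discarding blindly, but instead selecting, for each prescribed open target $U_m$, a witness that simultaneously avoids the ``bad'' closed subspace $q^{-1}(V_{m-1})$. The non-triviality of $p$ is precisely what keeps that subspace proper at every stage, and the empty-interior fact is what guarantees that a legal choice survives inside every $U_m$.
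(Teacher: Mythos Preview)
The paper does not give its own proof of this lemma; it merely cites it as Lemma~4.3 in \cite{ss}. Your argument is correct and is the natural one: reformulating $p$-independence via the quotient $q:X\to X/\ker p$ reduces the task to choosing, inside each basic open set $U_m$, an element of $A$ whose image avoids the finite-dimensional span $V_{m-1}$ of the previous choices, and non-triviality of $p$ keeps $q^{-1}(V_{m-1})$ a proper closed subspace (hence with empty interior), so the dense set $A$ must meet $U_m\setminus q^{-1}(V_{m-1})$.
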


\begin{corollary}\label{nnn1} Let $X$ be a separable Fr\'echet space
and $p$ be a non-trivial continuous seminorm on $X$. Then there is a
dense countably dimensional subspace $E$ of $X$ such that $E\cap\ker
p=\{0\}$.
\end{corollary}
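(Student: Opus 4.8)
The plan is to read this off directly from Lemma~\ref{lile}, which already does all the work. First I would invoke separability of $X$ to fix a dense countable subset $A\subseteq X$. Since $p$ is a non-trivial continuous seminorm, Lemma~\ref{lile} supplies a subset $B\subseteq A$ that is $p$-independent and dense in $X$. I would then simply set $E=\spann(B)$ and argue that this $E$ has all three required properties.

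The verification splits into three quick checks. Density is immediate, since $B\subseteq E$ and $B$ is dense in $X$. For the dimension count, I would observe that $p$-independence of $B$ means exactly that the vectors $\{x+\ker p:x\in B\}$ are linearly independent in $X/\ker p$; in particular $B$ is a linearly independent subset of $X$, so $\dim E=|B|$. Non-triviality of $p$ makes $\ker p$ of infinite codimension, hence $X$ is infinite dimensional, so the dense set $B$ must be infinite. Therefore $E$ is countably (infinite) dimensional.

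Finally, for $E\cap\ker p=\{0\}$, I would take any nonzero $x\in E$ and write it as $x=z_1a_1+\dots+z_na_n$ with pairwise distinct $a_1,\dots,a_n\in B$ and nonzero $z_1,\dots,z_n\in\K$; then $p$-independence gives $p(x)\neq0$, so $x\notin\ker p$. This last step is precisely the point of the $p$-independence condition, which was tailored to guarantee trivial intersection with $\ker p$. I do not expect any genuine obstacle here: the entire substance lies in Lemma~\ref{lile}, and this corollary merely repackages its conclusion in terms of a subspace rather than a set.
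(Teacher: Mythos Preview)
Your proposal is correct and follows exactly the paper's own argument: pick a dense countable $A$, apply Lemma~\ref{lile} to extract a dense $p$-independent $B\subseteq A$, and take $E=\spann(B)$. The paper simply asserts that ``$E$ has all desired properties'' where you spell out the three checks, but the approach is identical.
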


\begin{proof}Since $X$ is separable, there is a dense countable
subset $A$ of $X$. By Lemma~\ref{lile}, there is a dense in $X$
$p$-independent subset $B$ of $A$. Clearly $E=\spann(B)$ has all
desired properties.
\end{proof}

For a continuous seminorm $p$ on a locally convex space $X$, $f\in
X'$ and $x\in X$, we denote
$$
\widehat p(f,x)=p^*(f)p(x)
$$
with the agreement that $\infty\cdot 0=0$. That is, $\widehat
p(f,x)=0$ if $p(x)=0$, $\widehat p(f,x)=\infty$ if $p(x)>0$ and
$p^*(f)=\infty$ and $\widehat p(f,x)=p^*(f)p(x)$ otherwise.

\begin{lemma}\label{nuc} Let $X$ be a Fr\'echet space
$\{x_n\}_{n\in\N}$ and $\{f_n\}_{n\in\N}$ be sequences in $X$ and
$X'$ respectively and $\{p_n\}_{n\in\N}$ be a sequence of seminorms
on $X$ defining the topology of $X$. If for every $k\in\N$,
$c_k=\sum\limits_{n=1}^\infty\widehat p_k(f_n,x_n)<\infty$, then the
formula
$$
Tx=\sum_{n=1}^\infty f_n(x)x_n
$$
defines a continuous linear operator on $X$. Furthermore, if $c_k<1$
for every $k\in\N$, then the operator $I+T$ is invertible.
\end{lemma}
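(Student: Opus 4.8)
The plan is to reduce both assertions to the single scalar estimate
$p_k(f_n(x)x_n)\le\widehat p_k(f_n,x_n)\,p_k(x)$, valid for all $k,n\in\N$ and all $x\in X$, and then to exploit completeness of $X$. Once this bound is in hand, summing over $n$ gives $\sum_{n=1}^\infty p_k(f_n(x)x_n)\le c_kp_k(x)<\infty$ for each $k$, so the partial sums of $\sum_n f_n(x)x_n$ form a Cauchy sequence with respect to every defining seminorm $p_k$ and hence converge in the Fr\'echet space $X$. This defines $Tx$; linearity is immediate from the linearity of each $f_n$, and passing to the limit in the partial-sum estimate yields $p_k(Tx)\le c_kp_k(x)$ for every $k$, which is exactly the statement that $T\in L(X)$.

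To establish the pointwise bound I would argue case by case, since $\widehat p_k(f_n,x_n)=p_k^*(f_n)p_k(x_n)=|f_n(x)|^{-1}\cdot 0$-type degeneracies are governed by the convention $\infty\cdot 0=0$. Write $p_k(f_n(x)x_n)=|f_n(x)|\,p_k(x_n)$. If $p_k(x_n)=0$ the left side vanishes and the inequality is trivial. If $p_k(x_n)>0$, then finiteness of $c_k$ forces $\widehat p_k(f_n,x_n)<\infty$ and hence $p_k^*(f_n)<\infty$; from the definition of $p_k^*$ together with homogeneity one deduces $|f_n(x)|\le p_k^*(f_n)p_k(x)$, where the case $p_k(x)=0$ is covered by the observation that a functional with $p_k^*(f_n)<\infty$ must vanish on $\ker p_k$. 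Multiplying by $p_k(x_n)$ gives the claim.

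For invertibility I would use the Neumann series. Iterating $p_k(Tx)\le c_kp_k(x)$ yields $p_k(T^mx)\le c_k^m p_k(x)$, so when $c_k<1$ for every $k$ the series $Sx=\sum_{m=0}^\infty(-1)^mT^mx$ converges absolutely with respect to each $p_k$, and by completeness defines an operator $S$ with $p_k(Sx)\le(1-c_k)^{-1}p_k(x)$; thus $S\in L(X)$. A telescoping computation then gives $(I+T)S=S(I+T)=I$: writing $Sx+T(Sx)=\sum_{m\ge0}(-1)^mT^mx+\sum_{m\ge0}(-1)^mT^{m+1}x$ and shifting the index in the second sum leaves only the $m=0$ term $x$. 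The interchange of $T$ with the infinite sum is legitimate precisely because $T$ is continuous, so it commutes with the convergent series. Hence $I+T$ is invertible with $(I+T)^{-1}=S\in L(X)$, i.e. $I+T\in GL(X)$.

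The main obstacle is not the functional-analytic machinery, which is the routine Neumann-series argument, but the careful bookkeeping of the convention $\infty\cdot0=0$ in the pointwise estimate. One must resist simply writing $|f_n(x)|\le p_k^*(f_n)p_k(x)$, since this can fail on $\ker p_k$ when $p_k^*(f_n)=\infty$; the split according to whether $p_k(x_n)$ vanishes, combined with the finiteness of $c_k$ to exclude the genuinely infinite case, is exactly what makes the degenerate terms harmless.
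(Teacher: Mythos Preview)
Your proof is correct and follows essentially the same route as the paper: the pointwise estimate $p_k(f_n(x)x_n)\le\widehat p_k(f_n,x_n)\,p_k(x)$, absolute convergence giving $p_k(Tx)\le c_kp_k(x)$, and the Neumann series $S=\sum_{m\ge0}(-T)^m$ for the inverse. The only cosmetic difference is that the paper invokes the uniform boundedness principle to get continuity of $S$, whereas you extract the explicit bound $p_k(Sx)\le(1-c_k)^{-1}p_k(x)$ directly from the geometric series; your version is slightly more informative and avoids the extra citation.
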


\begin{proof} It is easy to see that for every $x\in X$ and
$k,n\in\N$, $p_k(f_n(x)x_n)\leq p_k(x)\widehat p_k(f_n,x_n)$. It
follows that the series in the last display converges absolutely in
$X$ and $p_k(Tx)\leq c_kp_k(x)$ for every $x\in X$ and every
$k\in\N$. Hence $T$ is a well-defined continuous linear operator on
$X$.

Assume now that $c_k<1$ for every $k\in\N$. Define $S:X\to X$ by the
formula
$$
S=\sum_{n=0}^\infty (-T)^n.
$$
Since $p_k(Tx)\leq c_kp_k(x)$ for every $x\in X$ and every $k\in\N$,
we have $p_k(T^nx)\leq c_k^np_k(x)$ for every $x\in X$ and every
$k,n\in\N$. Since $c_k<1$, the series of operators in the above
display converges pointwise. By the uniform boundedness principle,
$S$ is a continuous linear operator. It is a routine exercise to
check that $S(I+T)=(I+T)S=I$. That is, $I+T$ is invertible.
\end{proof}

\begin{lemma}\label{BuBuBu} Let $X$ be a separable Fr\'echet space,
whose topology is given by an increasing sequence $\{p_n\}_{n\in\N}$
of seminorms and let $\{X_n\}_{n\in\Z_+}$ be a sequence of closed
linear subspaces of $X$ such that $X_{n+1}\subseteq X_{n}\cap\ker
p_{n+1}$ and $X_n/(X_{n}\cap\ker p_{n+1})$ is infinite dimensional
for each $n\in\Z_+$. Then there exist $\{u_{n,k}:n,k\in\Z_+\}\subset
X$ and $\{f_{n,k}:n,k\in\Z_+\}\subset X'$ such that
\begin{align}
&\text{$\spannn\{u_{m,k}:m\geq n,\ k\in\Z_+\}=X_n$ for every
$n\in\Z_+$};\label{BuBuBu1}
\\
&\text{for every $n\in\Z_+$, $u_{n,k}$ for $k\in\Z_+$ are
$p_{n+1}$-independent};\label{BuBuBu2}
\\
&\text{$p_{n+1}^*(f_{n,k})<\infty$ for every $n\in\N$ and
$k\in\Z_+$};\label{BuBuBu3}
\\
&\text{$f_{n,k}(u_{m,j})=\delta_{n,m}\delta_{k,j}$ for every
$n,k,m,j\in\Z_+$}.\label{BuBuBu4}
\end{align}
\end{lemma}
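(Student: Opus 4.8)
My plan is to build the system one level at a time, pushing almost everything onto the tools already available and isolating a single genuine difficulty. First I record the cheap reductions; we may assume $X_0=X$ (otherwise restrict to $X_0$, which is again a separable Fréchet space with the induced increasing seminorms, and extend the functionals produced below from $X_0$ to $X$ by Hahn--Banach with the same $p_{n+1}^*$-bound). Each $X_n$ is a closed subspace of the separable Fréchet space $X$, hence itself a separable Fréchet space, and the hypothesis that $X_n/(X_n\cap\ker p_{n+1})$ is infinite dimensional says exactly that $p_{n+1}$ is non-trivial on $X_n$. Two instances of \eqref{BuBuBu4} are automatic once \eqref{BuBuBu3} holds: if $p_{n+1}^*(f)<\infty$ then $f$ vanishes on $\ker p_{n+1}$, and since $u_{m,j}\in X_m\subseteq X_{n+1}\subseteq\ker p_{n+1}$ for $m>n$, every $f_{n,k}$ automatically kills all higher-level vectors; symmetrically $f_{m,j}$ kills $u_{n,k}$ whenever $m<n$. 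Thus the only non-automatic part of \eqref{BuBuBu4} is the downward relation $f_{n,k}(u_{m,j})=0$ for $m<n$ together with the within-level relation $f_{n,k}(u_{n,j})=\delta_{k,j}$. I would also replace \eqref{BuBuBu1} by the cleaner per-level requirement
$$
(\ast)\qquad \spannn(\{u_{n,k}:k\in\Z_+\}\cup X_{n+1})=X_n\quad\text{for every }n,
$$
i.e.\ the level-$n$ vectors are dense in $X_n/X_{n+1}$; iterating $(\ast)$ from level $n$ up to any level $N$ and using $X_{N+1}\subseteq\ker p_{N+1}\subseteq\ker p_j$ for $j\le N$ shows that $\spann\{u_{m,k}:m\ge n\}$ is $p_j$-dense in $X_n$ for every $j$, which is \eqref{BuBuBu1}.

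Now to the inductive step. Suppose $u_{m,j},f_{m,j}$ have been constructed for $m<n$ so that $(\ast)$, \eqref{BuBuBu2}, \eqref{BuBuBu3} and \eqref{BuBuBu4} hold among them. Let $Z_n$ be the Banach completion of $(X,p_{n+1})$ and $q_n\colon X\to Z_n$ the canonical map, so $\|q_nx\|=p_{n+1}(x)$ and $\ker q_n=\ker p_{n+1}$; put $\Pi_n=\spannn q_n(\spann\{u_{m,j}:m<n\})$ and $W_n=Z_n/\Pi_n$ with quotient map $\pi_n$. Iterating $(\ast)$ over the already-built levels $0,\dots,n-1$ gives $\spannn(\spann\{u_{m,j}:m<n\}\cup X_n)=X$, and since $q_n$ kills $X_{n+1}$ while $q_n(\spann\{u_{m,j}:m\ge n\})\subseteq q_n(X_n)$, the set $\Pi_n+q_n(X_n)$ is dense in $Z_n$; hence $q_n(X_n)$ is dense in the separable Banach space $W_n$. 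The plan is to choose $u_{n,k}\in X_n$ dense in $X_n/X_{n+1}$ and such that the images $\pi_nq_n(u_{n,k})$ form a fundamental minimal system of $W_n$. Biorthogonal functionals for this system exist precisely because the system is minimal; they are elements of $W_n'$, and composing with $\pi_nq_n$ yields $f_{n,k}\in X'_{p_{n+1}}$ (so \eqref{BuBuBu3} holds) that are biorthogonal to the $u_{n,j}$ and vanish on $\Pi_n\supseteq\spann\{u_{m,j}:m<n\}$ — exactly the missing downward part of \eqref{BuBuBu4}. Minimality in $W_n$ forces the $q_n(u_{n,k})$ to be linearly independent in $Z_n$, which is \eqref{BuBuBu2}. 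Producing such $u_{n,k}$, simultaneously dense in $X_n/X_{n+1}$ and projecting to a fundamental minimal system of $W_n$, is a Markushevich-basis type construction: I would start from a countable subset of $X_n$ dense in $X_n/X_{n+1}$ and biorthogonalise it in $W_n$ step by step, using Lemma~\ref{lile}, Corollary~\ref{nnn1} and Lemma~\ref{bububu}, each correction staying inside $X_n$ and not disturbing the density already secured.

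The real obstacle, and the only place where the hypotheses must be used with care, is guaranteeing that $W_n$ is infinite dimensional — equivalently that $q_n(X_n)\cap\Pi_n$ has infinite codimension in the infinite-dimensional space $q_n(X_n)$. This is where the earlier levels constrain the later ones, and it is genuinely delicate: if at some stage $m<n$ the level-$m$ vectors were taken dense in all of $X_m$ rather than merely in $X_m/X_{m+1}$, their $q_n$-images would already be dense in $q_n(X_m)\supseteq q_n(X_n)$, forcing $W_n=\{0\}$. So $(\ast)$ must be used as an economy principle, and more: one must choose the level-$m$ vectors so that their ``finer components'', measured by the seminorms $p_{n+1}$ with $n>m$, stay transverse to $q_n(X_n)$. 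The model $X=\prod_{i\in\N}\ell_2$ with $p_n(x)^2=\sum_{i\le n}\|x_i\|^2$ and $X_n=\ker p_n$ makes this transparent: choosing each $u_{m,j}$ inside the single block $H_{m+1}$ keeps every $\Pi_n$ inside $\spannn q_n(H_1\oplus\dots\oplus H_n)$, disjoint from $q_n(X_n)\cong H_{n+1}$, so every $W_n$ is infinite dimensional.

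In general no such block splitting is available, so the construction has to be a single simultaneous induction exploiting the fact that the whole increasing sequence $\{p_n\}$ is known in advance, maintaining as an inductive invariant that $q_n(X_n)$ is infinite dimensional modulo $\Pi_n$ for all $n$ still to come. I expect this bookkeeping, rather than any individual step above, to be the crux. In particular I would warn against the naive remedy of rescaling $u_{m,j}\mapsto\lambda u_{m,j}$, $f_{m,j}\mapsto\lambda^{-1}f_{m,j}$: it leaves the scale-invariant product $p_{n+1}^*(f_{m,j})\,p_{n+1}(u_{m,j})\ge|f_{m,j}(u_{m,j})|=1$ unchanged, so thinness of $\Pi_n$ cannot be bought by rescaling; it is the transversal placement of the lower-level vectors that must do the work.
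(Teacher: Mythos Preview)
Your structural analysis is correct: the only non-automatic biorthogonality is the downward one, $f_{n,k}(u_{m,j})=0$ for $m<n$, and you have correctly located the obstruction to a level-by-level construction---once infinitely many lower-level vectors are in place, their $p_{n+1}$-closure $\Pi_n$ may swallow $q_n(X_n)$ and leave $W_n$ trivial. But you stop exactly where the proof begins: you name the bookkeeping as the crux without carrying it out, so as written this is a plan, not a proof.

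The paper dissolves rather than manages this obstacle. It fixes a bijection $\alpha=(\alpha_1,\alpha_2):\N\to\Z_+^2$ and builds one vector $w_m$ and one functional $g_m$ per step, interleaving all levels. Starting from dense $p_{n+1}$-independent sequences $\{v_{n,k}\}$ in each $X_n$ (Lemma~\ref{lile}), it sets
\[
w_m = v_{\alpha(m)} - \sum_{j<m} g_j(v_{\alpha(m)})\, w_j,
\]
a Gram--Schmidt correction. Two automatic cancellations make this work. First, if $\alpha_1(j)<\alpha_1(m)$ then $g_j$ vanishes on $\ker p_{\alpha_1(j)+1}\supseteq X_{\alpha_1(m)}\ni v_{\alpha(m)}$, so only same-or-higher-level terms survive in the sum; this keeps $w_m$ in the correct span and preserves $p_{\alpha_1(m)+1}$-independence. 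Second, the new functional $g_m$ need only satisfy $g_m(w_m)=1$ and $g_m(w_j)=0$ for the \emph{finitely many} $j<m$ with $\alpha_1(j)\le\alpha_1(m)$ (higher-level $w_j$ lie in $\ker p_{\alpha_1(m)+1}$ and are killed automatically); these finitely many vectors are $p_{\alpha_1(m)+1}$-independent, so Hahn--Banach supplies $g_m$ directly. Because at every stage only finitely many vectors have been placed, there is no infinite closure to control---no $\Pi_n$, no transversality invariant to maintain across future levels. The interleaving is not merely bookkeeping for your scheme; it replaces the scheme, and the difficulty you identified simply does not arise.
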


\begin{proof} Since $X_n/(X_{n}\cap\ker p_{n+1})$ is infinite dimensional,
$p_{n+1}$ is a non-trivial seminorm on the separable Fr\'echet space
$X_n$. By Lemma~\ref{lile}, there is a $p_{n+1}$-independent dense
sequence $\{v_{n,k}\}_{k\in\Z_+}$ in $X_n$. Fix a bijection
$\alpha=(\alpha_1,\alpha_2):\N\to\Z_+^2$. We shall construct
inductively $w_j\in X$ and $g_j\in X'$ for $j\in\N$ such that for
every $k\in\N$,
\begin{align}
&w_k\in v_{\alpha_1(k),\alpha_2(k)}+
\spann\{v_{\alpha_1(j),\alpha_2(j)}:j<k,\
\alpha_1(j)\geq\alpha_1(k)\};\label{BuBuBu5}
\\
&p^*_{\alpha_1(k)+1}(g_k)<\infty;\label{BuBuBu6}
\\
&g_k(w_k)=1;\label{BuBuBu7}
\\
&\text{$g_k(w_j)=0$ if $j<k$};\label{BuBuBu8}
\\
&\text{$g_j(w_k)=0$ if $j<k$}.\label{BuBuBu9}
\end{align}

We start by setting $w_1=v_{\alpha_1(1),\alpha_2(1)}$ and observing
that $w_1\notin\ker p_{n+1}$, where $n=\alpha_1(1)$. Indeed, this
follows from $p_{n+1}$-independence of $\{v_{n,k}\}_{k\in\Z_+}$. By
the Hahn--Banach theorem, there is $g_1\in X'$ such that
$p^*_{n+1}(g_1)<\infty$ and $g_1(w_1)=1$. Obviously,
(\ref{BuBuBu5}--\ref{BuBuBu9}) with $k=1$ are satisfied. Thus we
have our basis of induction.

Assume now that $m\geq 2$ and $w_j\in X$, $g_j\in X'$ for $j<m$
satisfy (\ref{BuBuBu5}--\ref{BuBuBu9}) for every $k<m$. We have to
construct $w_m$ and $g_m$ such that (\ref{BuBuBu5}--\ref{BuBuBu9})
hold for $k=m$. First, we define
\begin{equation}\label{foo}
w_m=v_{\alpha_1(m),\alpha_2(m)}-\sum_{j=1}^{m-1}
g_j(v_{\alpha_1(m),\alpha_2(m)})w_j.
\end{equation}
Let $n=\alpha_1(m)$ and $j<m$. By (\ref{BuBuBu6}) with $k=j$,
$p^*_{\alpha_1(j)+1}(g_j)<\infty$ and therefore $g_j$ vanishes on
$\ker p_{\alpha_1(j)+1}\supseteq X_{\alpha_1(j)+1}$. Since
$v_{\alpha_1(m),\alpha_2(m)}\in X_n$, we see that
$g_j(v_{\alpha_1(m),\alpha_2(m)})=0$ if $\alpha_1(j)<n=\alpha_1(m)$.
Thus (\ref{BuBuBu5}) for $k<m$ and (\ref{foo}) imply (\ref{BuBuBu5})
for $k=m$. The latter together with $p_{j+1}$-independence of
$\{v_{j,k}\}_{k\in\Z_+}$ ensures that the vectors $w_s$ for $s\leq
m$, $\alpha_1(s)\leq n$ are $p_{n+1}$-independent. By the
Hahn--Banach theorem, there exists $g_m\in X'$ such that
$p^*_{n+1}(g_m)<\infty$, $g_m(w_m)=1$ and $g_m(w_j)=1$ whenever
$j<m$ and $\alpha_1(j)\leq n$. If $j<m$ and $\alpha_1(j)>n$, then
$w_j\in X_{\alpha_1(j)}\subseteq X_{n+1}\subseteq \ker p_{n+1}$ and
therefore $g_m(w_j)=0$ since $g_m$ vanishes on $\ker p_{n+1}$. Thus
(\ref{BuBuBu6}), (\ref{BuBuBu7}) and (\ref{BuBuBu8}) with $k=m$ are
satisfied. Finally, using (\ref{foo}) and
(\ref{BuBuBu7}--\ref{BuBuBu9}) with $k<m$, one can easily verify
that (\ref{BuBuBu9}) for $k=m$ is also satisfied. This completes the
inductive construction of the sequences $\{w_n\}$ and $\{g_n\}$
satisfying (\ref{BuBuBu5}--\ref{BuBuBu9}).

Now we set $u_{\alpha_1(j),\alpha_2(j)}=w_j$ and
$f_{\alpha_1(j),\alpha_2(j)}=g_j$. Clearly (\ref{BuBuBu6}) implies
(\ref{BuBuBu3}), while (\ref{BuBuBu7}--\ref{BuBuBu9}) imply
(\ref{BuBuBu4}). Next, (\ref{BuBuBu2}) follows from
$p_{j+1}$-independence of $\{v_{j,k}\}_{k\in\Z_+}$ and
(\ref{BuBuBu5}). Finally, (\ref{BuBuBu5}) ensures that
$\spann\{u_{m,k}:m\geq n,\ k\in\Z_+\}=\spann\{v_{m,k}:m\geq n,\
k\in\Z_+\}$. It remains to recall that $\{v_{m,k}:m\geq n,\
k\in\Z_+\}$ is a dense subset of $X_n$ to see that (\ref{BuBuBu1})
follows.
\end{proof}

We also need the following lemma, which helps to verify the
completeness of locally convex spaces. It is a variation of a pretty
standard fact, see the completeness chapter in \cite{shifer}. We
include its proof for the sake of convenience.

\begin{lemma}\label{comple} Let $(E,\tau)$ be a complete locally convex space, $F$
be a linear subspace of $E$ and ${\cal Q}$ be a collection of
seminorms on $F$ defining a locally convex topology $\theta$ on $F$
stronger than the one inherited from $E:$ \
$\theta\supseteq\tau\bigr|_F$. Assume also that for every $q\in{\cal
Q}$, the ball $B_q=\{x\in F:q(x)\leq 1\}$ is $\tau\bigr|_F$-closed
in $F$ and that for every $\theta$-Cauchy net $\{x_\alpha\}$ in $F$
its $\tau$-limit belongs to $F$: $\lim_\tau x_\alpha\in F$. Then
$(F,\theta)$ is complete.
\end{lemma}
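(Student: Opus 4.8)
The plan is to verify completeness directly from the definition: I start with an arbitrary $\theta$-Cauchy net $\{x_\alpha\}$ in $F$ and exhibit a $\theta$-limit lying in $F$.

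First I would note that, because $\theta\supseteq\tau\bigr|_F$, every $\theta$-Cauchy net is in particular $\tau$-Cauchy in $E$. Completeness of $(E,\tau)$ then yields a $\tau$-limit $x\in E$ (unique, since all spaces are Hausdorff), and the second hypothesis guarantees $x\in F$. Thus $x$ is the only possible candidate for the $\theta$-limit, and the whole problem reduces to upgrading the convergence $x_\alpha\to x$ from $\tau$ to $\theta$, i.e. to proving $q(x_\alpha-x)\to 0$ for each $q\in{\cal Q}$.

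This upgrade is where the closed-ball hypothesis enters and is the main point of the argument. Fix $q\in{\cal Q}$ and $\epsilon>0$. By the $\theta$-Cauchy property there is an index $\alpha_0$ with $q(x_\alpha-x_\beta)\leq\epsilon$ for all $\alpha,\beta\geq\alpha_0$; equivalently, for each fixed $\alpha\geq\alpha_0$ the tail $\{x_\beta-x_\alpha:\beta\geq\alpha_0\}$ is contained in the scaled ball $\epsilon B_q=\{y\in F:q(y)\leq\epsilon\}$, which is $\tau\bigr|_F$-closed because $y\mapsto\epsilon y$ is a $\tau\bigr|_F$-homeomorphism of $F$ and $B_q$ is $\tau\bigr|_F$-closed. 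Freezing $\alpha$ and letting $\beta$ vary, the net $\{x_\beta-x_\alpha\}_{\beta\geq\alpha_0}$ $\tau$-converges to $x-x_\alpha$, which lies in $F$; since its terms lie in $F$ and in the $\tau\bigr|_F$-closed set $\epsilon B_q$, the limit $x-x_\alpha$ also lies in $\epsilon B_q$, that is, $q(x-x_\alpha)\leq\epsilon$. As $\alpha\geq\alpha_0$ was arbitrary, $q(x_\alpha-x)\to 0$, and as $q\in{\cal Q}$ was arbitrary, $x_\alpha\to x$ in $\theta$. This proves that $(F,\theta)$ is complete.

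The one delicacy worth flagging is that closedness of the balls is assumed only for the weaker topology $\tau\bigr|_F$, whereas the estimate we need lives in the stronger seminorm $q$. The device that bridges this gap is to fix one index of the net and pass to the $\tau$-limit in the other index inside a fixed $\tau\bigr|_F$-closed ball: this transfers a $\theta$-sized Cauchy estimate across a merely $\tau$-convergent limit, without ever requiring the balls to be $\theta$-closed.
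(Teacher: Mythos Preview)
Your proof is correct and follows essentially the same approach as the paper's: fix one index of the Cauchy net, pass to the $\tau$-limit in the other index inside a $\tau\bigr|_F$-closed ball, and conclude the seminorm estimate. The only cosmetic difference is that the paper first translates to reduce to the case where the $\tau$-limit is $0$, whereas you work directly with the general limit $x$; your version is arguably slightly more direct.
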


\begin{proof} It suffices to prove that every $\tau$-convergent to $0$
$\theta$-Cauchy net $\{x_\alpha\}$ in $F$ is also
$\theta$-convergent to $0$. Indeed, due to our assumptions, this
will imply that every $\theta$-Cauchy net $\{x_\alpha\}$ in $F$ is
$\theta$-convergent to $\lim_\tau x_\alpha\in F$.

Let $\{x_\alpha\}_{\alpha\in D}$ be a $\theta$-Cauchy net
$\{x_\alpha\}$ in $F$ such that $\lim_\tau x_\alpha=0$. Take any
$q\in{\cal Q}$. Since $\{x_\alpha\}$ is $\theta$-Cauchy, for every
$\epsilon>0$, there is $\alpha_0\in D$ such that
$q(x_\alpha-x_\beta)\leq \epsilon$ for every $\alpha,\beta\in D$,
$\alpha,\beta>\alpha_0$. Fix temporarily $\beta\in D$ such that
$\beta>\alpha_0$. Then we can write
$$
\{x_\alpha:\alpha>\alpha_0\}\subseteq W=\{x\in
F:q(x-x_\beta)\leq\epsilon\}.
$$
Since $B_q$ is $\tau$-closed in $F$, $W$ is also $\tau$-closed in
$F$. Since $\{x_\alpha\}$ is $\tau$-convergent to $0$, from the
above display it follows that $0\in W$. Hence
$q(x_\beta)\leq\epsilon$. Since $\beta\in D$ is an arbitrary element
such that $\beta>\alpha_0$, we have $q(x_\beta)\leq\epsilon$ for
every $\beta>\alpha_0$. Hence $q(x_\alpha)\to 0$. Since $q\in{\cal
Q}$ is arbitrary, $\lim_\theta x_\alpha=0$, as required.
\end{proof}

\subsection{An obstacle to embedding of Fr\'echet spaces \label{FFFUUU}}

In the proof of Theorem~\ref{grgr11}, we will need to establish that
a certain Fr\'echet space is non-isomorphic to a subspace of another
given Fr\'echet space. Our approach is pretty standard. If $p$ is a
seminorm on a vector space $X$, $q$ is a seminorm on a vector space
$Y$, symbol $L_{p,q}(X,Y)$ stands for the space of linear maps
$T:X\to Y$ for which there is $c>0$ such that $q(Tx)\leq cp(x)$ for
every $x\in X$. The minimal possible $c$ with this property will be
denoted $\pi_{p,q}(T)$. For $n\in\Z_+$ and $T\in L_{p,q}(X,Y)$, we
denote
$$
\alpha_n(p,q,T)=\inf\{\pi_{p,q}(T-S):S\in L_{p,q}(X,Y),\ \dim
S(X)\leq n\}.
$$
The numbers $\alpha_n(p,q,T)$ are usually called approximation
numbers. Clearly, they form a decreasing sequence of non-negative
numbers.

In this section, the symbol $\Omega$ stands for the set of
decreasing sequences $r=\{r_n\}_{n\in\Z_+}$ of positive numbers such
that $\{nr_n\}$ is bounded. For $r\in\Omega$, the symbol ${\cal
F}_{[r]}$ stands for the class of $X\in{\cal F}$ such that for every
continuous seminorm $p$ on $X$ and $j\in\N$, there is a continuous
seminorm $q$ on $X$ satisfying $q\geq p$ and
$$
\lim_{n\to\infty}\frac{n^j\alpha_n(q,p,{\rm Id}_X)}{r_n}=0.
$$
It is an easy exercise to see that ${\cal F}_{[r]}$ is a subclass of
the class ${\cal N}$ of nuclear Fr\'echet spaces and ${\cal
F}_{[r]}={\cal N}$ if $n^{-k}=O(r_n)$ for some $k\in\N$. It is easy
(see \cite{pich}) to verify that if $Z$ is a subspace of $Y$ and
$T\in L_{p,q}(X,Y)$ satisfies $T(X)\subseteq Z$, then
$\alpha_n(p,q,T)\leq \alpha_n(p,q,\widetilde{T})\leq
(n+1)\alpha_n(p,q,T)$, where $\widetilde{T}$ denotes $T$ considered
as an element of $L_{p,q}(X,Z)$. It follows that each of the classes
${\cal F}_{[r]}$ is closed under passing to a closed linear
subspace. We will not need this, but it is worth noting that each
${\cal F}_{[r]}$ is also closed under quotients.

\begin{lemma}\label{fufu} Let $\{X_n\}_{n\in\N}$ be a sequence in
${\cal F}\setminus{\cal F}_0$. Then there is $r\in\Omega$ such that
$X_n\notin {\cal F}_{[r]}$ for every $n\in\N$.
\end{lemma}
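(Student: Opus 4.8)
The plan is to use the hypothesis $X_n\notin{\cal F}_0$ only through a single positivity property of approximation numbers, and then to build $r$ by a diagonal minimum. By Remark~\ref{MJ}, $X_n\notin{\cal F}_0$ means that the topology of $X_n$ is not weak; equivalently $X_n$ fails condition $(A_0)$, and in each of the remaining cases $(A_1)$--$(A_3)$ one checks at once that $X_n$ carries a \emph{non-trivial} continuous seminorm, i.e. one whose kernel has infinite codimension. Fix, for every $n$, an increasing fundamental sequence $\{p^{(n)}_k\}_{k\in\N}$ of continuous seminorms defining the topology of $X_n$, arranged so that $p^{(n)}_1$ is non-trivial (having produced one non-trivial seminorm, replace the first term of any fundamental sequence by its maximum with that seminorm). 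Put $a^{(n)}_{k,m}=\alpha_m(p^{(n)}_k,p^{(n)}_1,{\rm Id})$.

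The crux is the claim that $a^{(n)}_{k,m}>0$ for all $n,k\in\N$ and $m\in\Z_+$, and this is the only place the hypothesis is used. To see it, note that $\ker p^{(n)}_k\subseteq\ker p^{(n)}_1$, so $X_n/\ker p^{(n)}_k$ surjects onto the infinite dimensional space $X_n/\ker p^{(n)}_1$; choose $x_0,\dots,x_m\in X_n$ with $p^{(n)}_k(x_i)=1$ whose images in $X_n/\ker p^{(n)}_1$ are linearly independent. If $a^{(n)}_{k,m}=0$ there are rank-$\le m$ operators $S$ with $\pi_{p^{(n)}_k,p^{(n)}_1}({\rm Id}-S)\le\epsilon$ and $\epsilon\downarrow0$; since $Sx_0,\dots,Sx_m$ are $m+1$ vectors in an at most $m$-dimensional space, there is $c\neq0$ with $\max_i|c_i|=1$ and $\sum_ic_iSx_i=0$, whence for $z=\sum_ic_ix_i$ one gets $p^{(n)}_1(z)=p^{(n)}_1(z-Sz)\le\epsilon\,p^{(n)}_k(z)\le(m+1)\epsilon$. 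This contradicts the lower bound $p^{(n)}_1(z)\ge\delta\max_i|c_i|=\delta$ coming from the linear independence of the images $x_i+\ker p^{(n)}_1$ (on their span $p^{(n)}_1$ induces a genuine norm). Equivalently, $a^{(n)}_{k,m}$ is the $m$-th approximation number of the canonical map between the local Banach spaces at $p^{(n)}_k$ and at $p^{(n)}_1$, a map with infinite dimensional range, and such numbers never vanish. For $X_n\cong\omega$ this would fail — every seminorm has finite-codimensional kernel and $a^{(n)}_{k,m}=0$ for large $m$ — which is precisely why ${\cal F}_0$ must be excluded.

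With positivity in hand I set, for $m\in\N$,
$$
r_m=\min\Bigl(\{\,m^{-1}\,\}\cup\{\,a^{(n)}_{k,m}:1\le n\le m,\ 1\le k\le m\,\}\Bigr),
$$
then replace $r_m$ by $\min_{l\le m}r_l$ to make the sequence non-increasing (and set $r_0=1$). Each $r_m$ is a minimum of finitely many positive numbers, hence positive, while $r_m\le m^{-1}$ makes $\{m r_m\}$ bounded; thus $r\in\Omega$.

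It remains to verify $X_n\notin{\cal F}_{[r]}$ for every $n$. Fix $n$, take $p=p^{(n)}_1$ and $j=1$, and let $q\ge p$ be an arbitrary continuous seminorm on $X_n$. By fundamentality there are $k\in\N$ and $C>0$ with $q\le Cp^{(n)}_k$, and directly from the definition of $\pi$ this yields $\alpha_m(q,p^{(n)}_1,{\rm Id})\ge C^{-1}a^{(n)}_{k,m}$ for every $m$. For all $m\ge\max(n,k)$ the pair $(n,k)$ occurs in the minimum defining $r_m$, so $r_m\le a^{(n)}_{k,m}$, and therefore
$$
\frac{m\,\alpha_m(q,p^{(n)}_1,{\rm Id})}{r_m}\ge\frac{m}{C}\longrightarrow\infty\qquad(m\to\infty).
$$
Hence no $q\ge p^{(n)}_1$ can make the limit in the definition of ${\cal F}_{[r]}$ vanish for $j=1$, so $X_n\notin{\cal F}_{[r]}$. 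The main (indeed the only) obstacle is the positivity $a^{(n)}_{k,m}>0$; once it is secured, the diagonal minimum dominates from below all the sequences $\{a^{(n)}_{k,m}\}_m$ while staying inside $\Omega$, and no appeal to nuclearity is needed — non-nuclear $X_n$ are in any case excluded by ${\cal F}_{[r]}\subseteq{\cal N}$ and are equally well covered by the computation above.
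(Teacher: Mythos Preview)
Your proof is correct and follows essentially the same approach as the paper: choose fundamental sequences with non-trivial first seminorm, observe that the relevant approximation numbers are strictly positive, and pick $r$ to decay at least as fast as all of them via a diagonal argument. Your version is more explicit than the paper's (you spell out both the positivity argument and the diagonal minimum construction, whereas the paper simply asserts that such an $r$ can be picked), and you economize slightly by working only with the pairs $(p^{(n)}_k,p^{(n)}_1)$ rather than all $(p^{(n)}_m,p^{(n)}_k)$ with $k<m$, which is sufficient.
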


\begin{proof} Since $X_n$ is non-isomorphic to $\omega$, there is an
increasing sequence $\{p_{n,m}\}_{m\in\N}$ of seminorms on $X_n$
defining the topology of $X_n$ such that $X_n/\ker p_{n,1}$ is
infinite dimensional. Then $\alpha_t(p_{n,k},p_{n,m},{\rm
Id}_{X_n})>0$ for every $t\in\Z_+$ and $k,m,n\in\N$ satisfying
$k<m$. Now we can pick $r\in\Omega$ such that $nr_n\to 0$ and
$$
\lim_{t\to\infty}\frac{r_t}{\alpha_t(p_{n,m},p_{n,k},{\rm
Id}_{X_n})}=0\ \ \text{whenever $k,m,n\in\N$ and $k<m$.}
$$
Since $\{p_{n,m}\}_{m\in\N}$ defines the topology of $X_n$, we have
$X_n\notin{\cal F}_{[r]}$.
\end{proof}

\begin{lemma}\label{cuddly} Let $r\in\Omega$, $X\in{\cal F}$ and
$\{p_n\}_{n\in\N}$ be an increasing  sequence of seminorms defining
the topology of $X$ such that $\ker p_n/\ker p_{n+1}$ is infinite
dimensional for each $n\in\N$. Then there exists a closed linear
subspace $Y$ of $\ker p_1$ such that $Y\in {\cal F}_{[r]}$ and
$Y\cap \ker p_n$ is infinite dimensional for every $n\in\N$.
\end{lemma}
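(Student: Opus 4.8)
The plan is to realise $Y$ as the closed span of a biorthogonal ``staircase'' system inside $\ker p_1$, and to read off the nuclear-type condition $Y\in{\cal F}_{[r]}$ from the fact that, scale by scale, only finitely many (or only very high-frequency) basis vectors are ``visible'' to each seminorm $p_n$. First I would manufacture the system. Since $\ker p_n/\ker p_{n+1}$ is infinite dimensional for every $n$, I apply Lemma~\ref{BuBuBu} not to $X$ itself but to the separable Fr\'echet space $\ker p_1$, equipped with the shifted seminorms $\widetilde p_n=p_{n+1}|_{\ker p_1}$ and the closed subspaces $\widetilde X_n=\ker p_{n+1}$ for $n\in\Z_+$. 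The shift is precisely what makes the hypotheses hold: $\widetilde X_{n+1}\subseteq\widetilde X_n\cap\ker\widetilde p_{n+1}$ and $\widetilde X_n/(\widetilde X_n\cap\ker\widetilde p_{n+1})=\ker p_{n+1}/\ker p_{n+2}$ is infinite dimensional. This yields vectors $u_{n,k}\in\ker p_{n+1}\setminus\ker p_{n+2}$ (of ``depth'' $n+1$) and functionals $f_{n,k}$, all in $\ker p_1$, with $p_{n+2}^*(f_{n,k})<\infty$ and $f_{n,k}(u_{m,j})=\delta_{n,m}\delta_{k,j}$; extending the $f_{n,k}$ to $X'$ by Hahn--Banach keeps the $p_{n+2}^*$-bound.

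I would then carve out $Y$. Taking a subfamily $\{v_i\}$ of (rescaled) $u_{n,k}$ and setting $Y=\spannn\{v_i\}$ makes $Y$ a closed subspace of $\ker p_1$ automatically, and including chosen vectors of arbitrarily large depth forces $Y\cap\ker p_s$ to contain infinitely many linearly independent (biorthogonal) vectors, hence to be infinite dimensional, for every $s$. For the membership $Y\in{\cal F}_{[r]}$ I would estimate the linking maps: writing $g_i$ for the (rescaled) coefficient functionals and $P_Ny=\sum_{i\le N}g_i(y)v_i$, one gets for $m$ past the depths involved
$$
\alpha_N(p_m,p_n,{\rm Id}_Y)\ \le\ \sum_{i>N}p_m^*(g_i)\,p_n(v_i),
$$
and here $p_n(v_i)=0$ as soon as ${\rm depth}(v_i)\ge n$, so only the ``shallow'' vectors (depth $<n$) ever contribute.

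This last observation already settles the stated lemma in its cleanest form: if I keep exactly one vector of each depth, then for every $n$ the seminorm $p_n|_Y$ vanishes on $\spannn\{v_i:{\rm depth}(v_i)\ge n\}$, a subspace of finite codimension in $Y$, so $Y/\ker(p_n|_Y)$ is finite dimensional, every linking map ${\rm Id}:(Y,p_m)\to(Y,p_n)$ has finite rank, and $\alpha_N(p_m,p_n,{\rm Id}_Y)=0$ for all large $N$; thus $Y\in{\cal F}_{[r]}$ holds trivially (in fact $Y\cong\omega$), while $Y\cap\ker p_s$ is infinite dimensional as arranged. The genuine obstacle appears only if one wants a non-$\omega$ space actually realising the rate $r$: then infinitely many shallow vectors must be kept and the series above must be driven below $r_N/N^j$ for every $j$. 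I would meet this by choosing each shallow $v_i$ of very high frequency --- a small fixed-scale anchor plus a vector taken from a deep kernel $\ker p_{M_i}$ with $M_i\uparrow\infty$ --- and choosing $g_i$ tight via Hahn--Banach, so that $p_m^*(g_i)\,p_n(v_i)$ is comparable to the ratio $p_n(v_i)/p_m(v_i)$ that high frequency sends to $0$. The hard part is securing this decay simultaneously for all pairs $(n,m)$; I would do it by a single diagonal induction, choosing the $i$-th vector deep enough to defeat the finitely many constraints active at stage $i$, the boundedness of $\{Nr_N\}$ (which is what $r\in\Omega$ provides) reducing the goal to a faster-than-polynomial decay that is easily overshot.
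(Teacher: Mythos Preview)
Your simple route---keep exactly one vector of each depth and observe that the resulting $Y$ is isomorphic to $\omega$---does prove the lemma exactly as stated. Every continuous seminorm on $\omega$ has kernel of finite codimension, so the approximation numbers $\alpha_N(q,p,{\rm Id}_\omega)$ eventually vanish and $\omega\in{\cal F}_{[r]}$ trivially for every $r\in\Omega$; meanwhile $Y\cap\ker p_s\supseteq\spannn\{v_d:d\ge s\}$ is infinite dimensional, and $Y\subseteq\ker p_1$ since every $v_d$ has depth $\ge1$. This is correct and far shorter than the paper's argument.

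The paper takes a genuinely different and more elaborate path. It assembles from the biorthogonal system of Lemma~\ref{BuBuBu} a nuclear-type operator $S$ on $X$ and defines $Y$ as the closure of the ``telescope'' $G=\bigl\{\sum_n x_n:x_n\in X_n,\ Sx_{n+1}=x_n\bigr\}$; on $G$ the identity coincides with $S$ modulo the relevant kernels, and finite-rank truncations of $S$ yield $\alpha_k(q_{m+1},q_m,{\rm Id}_Y)\le 2^{1-k}r_{k+1}$ directly. What this buys is that the paper's $Y$ has $(Y\cap\ker p_n)/(Y\cap\ker p_{n+1})$ infinite dimensional for every $n$---so $Y\in{\cal F}_3$, not ${\cal F}_0$. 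This extra property is not recorded in the lemma's statement, but it is tacitly used in the proof of (\ref{grgr11}.4): there one builds $E$ by piecing together dense countably dimensional subspaces of $Y\cap X_m$, and the claim ``$E\in{\cal M}_3$'' requires the quotients $(E\cap\ker p_n)/(E\cap\ker p_{n+1})$ to be infinite dimensional for infinitely many $n$. With your $Y\cong\omega$ those quotients are at most one-dimensional (they inject into $(Y\cap\ker p_n)/(Y\cap\ker p_{n+1})$), so $E$ would land in ${\cal M}_2$ and the application would collapse. Your final paragraph anticipates this and sketches how to produce a non-$\omega$ space, but the ``high frequency'' heuristic is too vague to carry through in an abstract Fr\'echet space with no basis; the operator-$S$ device is the paper's concrete substitute for it.
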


\begin{proof}Denote $X_n=\ker p_n$ for $n\in\N$ and $X_0=X$.
By Lemma~\ref{BuBuBu}, there exist $\{u_{n,k}:n,k\in\Z_+\}\subset X$
and $\{f_{n,k}:n,k\in\Z_+\}\subset X'$ such that
(\ref{BuBuBu1}--\ref{BuBuBu4}) are satisfied. By
Corollary~\ref{l22a}, there exists a Banach disk $D$ in $X$ such
that $u_{n,k}\in X_D$ for every $n,k\in\Z_+$. For each $n\in\N$, let
$q_n$ be the Minkowski functional of the set $D+\{x\in X:p_n(x)\leq
1\}$. It is easy to see that each $q_n$ is equivalent to $p_n$ and
therefore $\{q_n\}$ is an increasing sequence of seminorms defining
the topology of $X$. It also has an extra convenient property:
$q_n(x)\leq p_D(x)$ for every $x\in X$. Let $\beta:\N\times\N\to\N$
be a bijection. Now for $A\subseteq\N\times\N$, we can define a
linear map $S_A:X\to X$ by the formula
$$
S_Ax=\sum_{(n,k)\in A}
\frac{r_{\beta(n,k)}f_{n+1,2k}(x)}{2^{\beta(n,k)}
q_{n+2}^*(f_{n+1,2k})p_D(u_{n,k})}u_{n,k}.
$$
We also set $S=S_{\N\times\N}$. First, we shall verify that each
$S_A$ is a well-defined continuous linear operator. To this end,
consider the linear map $S_{A,n}:X\to X$ given by
$$
S_{A,n}x=\sum_{k:(n,k)\in A}
\frac{r_{\beta(n,k)}f_{n+1,2k}(x)}{2^{\beta(n,k)}
q_{n+2}^*(f_{n+1,2k})p_D(u_{n,k})}u_{n,k}.
$$
Since for each $x\in X$, $|f_{n+1,2k}(x)|\leq
q_{n+2}(x)q_{n+2}^*(f_{n+1,2k})$. Plugging this estimate into the
above display, we immediately see that the above series converges
absolutely in the Banach space $X_D$ and therefore in $X$. It
follows that $S_{A,n}$ is well-defined. Furthermore
$$
p_D(S_{A,n}x)\leq q_{n+2}(x)\sum_{k:(n,k)\in A}
\frac{r_{\beta(n,k)}}{2^{\beta(n,k)}}\leq2\cdot
2^{-j(A,n)}r_{j(A,n)}q_{n+2}(x),
$$
where $j(A,n)=\min\{s\in\N:\beta^{-1}(s)\in
A\cap(\{n\}\times\Z_+)\}$. It immediately follows that  $S_{A,n}$
are continuous. Furthermore, $S_{A,n}(X)\subseteq X_n$. Since $q_n$
vanishes on $X_n$ it follows that the series
$S_Ax=\sum\limits_{n=1}^\infty S_{A,n}x$ converges absolutely in the
Fr\'echet space $X$. Thus $S_A$ is well defined. The continuity of
each $S_{A,n}$ and the uniform boundedness principle imply the
continuity of $S_A$. Moreover, for $x\in X$ and $m\in\N$, we can use
the above display to see that
\begin{align}
q_m(S_Ax)&=q_m\biggl(\sum_{n=0}^\infty S_{A,n}x\biggr)=
q_m\biggl(\sum_{n=0}^{m-1} S_{A,n}x\biggr)\leq\sum_{n=0}^{m-1}
q_m(S_{A,n}x)\leq \sum_{n=0}^{m-1} p_D(S_{A,n}x)\notag
\\
&\leq 2 \sum_{n=0}^{m-1} 2^{-j(A,n)}r_{j(A,n)}q_{n+2}(x)
\leq2q_{m+1}(x) \sum_{n=0}^{m-1} 2^{-j(A,n)}r_{j(A,n)}\leq 4
q_{m+1}(x)2^{-t(A)}r_{t(A)},\label{truu}
\end{align}
where $t(A)=\min\{s\in\N:\beta^{-1}(s)\in A\}$.

Observe that according to (\ref{BuBuBu1}--\ref{BuBuBu4}),
$S(X_{n+1})\subseteq X_n$ for $n>1$ and $S$ vanishes on $X_1$. Now
we define $Y$ as the closure in $X$ of
$$
G=\Bigl\{\sum_{n=1}^\infty x_n:x_n\in X_n\ \ \text{and}\ \
Sx_{n+1}=x_n\ \ \text{for}\ \ n\in\N\Bigr\}.
$$
Note that each series in the definition of $G$ converges absolutely
since $x_n\in X_n$ and each $q_k$ vanishes on $X_s$ for $s\geq k$.
First, we shall show that $G\cap X_n$ is infinite dimensional for
every $n\in\N$. Let $n\in\N$ and $m$ be an odd positive integer.
Using (\ref{BuBuBu1}--\ref{BuBuBu4}) and the definition of $S$ we
see that $Su_{n,m}=0$ and for each $j\in\Z_+$ there is $c_j>0$ such
that $c_0=1$ and $S(c_{j+1}u_{n+j+1,2^{j+1}m})=c_{j}u_{n+j,2^jm}$.
Since $u_{n,k}\in X_n$, it follows that
$w_{n,m}=\sum\limits_{j=0}^\infty c_{j}u_{n+j,2^jm}\in G$. Using
(\ref{BuBuBu1}--\ref{BuBuBu4}) once again, we see that
$f_{n,m}(w_{n,m'})=\delta_{m,m'}$ whenever $m$ and $m'$ are odd
positive integers. Thus $\{w_{n,m}:m+1\in 2\N\}$ is an infinite
linearly independent subset of $G\cap X_n$. Since $G\subseteq Y$,
$Y\cap X_n$ is infinite dimensional.

Now let $m\geq 2$ and $x=\sum\limits_{n=1}^\infty x_n\in G$, where
$x_n\in X_n$ and $Sx_{n+1}=x_n$ for $n\in\N$. Let
$u=\sum\limits_{n=1}^m x_n$. Using the equalities $Sx_{n+1}=x_n$ and
$Sx_1=0$ as well as the inclusions $x_k\in X_k$, we obtain
$$
x-u\in\ker p_{m+1}\ \ \ \text{and}\ \ \ x-Su\in\ker p_m.
$$
It follows that up to moving along the kernels of the corresponding
seminorms, ${\rm Id}_G$ coincides with $S\bigr|_G$ as elements of
$L_{p_{m+1},p_m}(G,G)=L_{q_{m+1},q_m}(G,G)$. Since $G$ is dense in
$Y$, we have
$$
\alpha_k(q_{m+1},q_m,{\rm Id}_Y)=\alpha_k(q_{m+1},q_m,S\bigr|_Y)\ \
\ \text{for each $k\in\Z_+$}.
$$
Now for each $k\in\Z_+$, we can consider
$A_k=\N\times\N\setminus\{(n,j)\in\N\times\N:\beta(n,j)\leq k\}$.
Obviously $|\N\times\N\setminus A_k|=k$ and $t(A_k)=k+1$. By
\ref{truu}, $q_m(S_{A_k}x)\leq 2^{1-k}r_{k+1}q_{m+1}(x)$ for every
$x\in X$ (and therefore for every $x\in Y$). Since the linear
operator $S-S_{A_k}$ has rank $k$, the above display yields
$$
\alpha_k(q_{m+1},q_m,{\rm Id}_Y)=\alpha_k(q_{m+1},q_m,S\bigr|_Y)\leq
2^{1-k}r_{k+1}.
$$
Since the sequence $\{q_m\}$ defines the topology of $Y$, $Y\in
{\cal F}_{[r]}$.
\end{proof}

\subsection{A specific countable topological space}

We call a sequence trivial if it is eventually stabilizing.

\begin{lemma}\label{m2} There exists a regular topology $\tau$ on $\Z$ such
that the topological space $\Z_\tau=(\Z,\tau)$ has the following
properties$:$
\begin{itemize}\itemsep-2pt
\item[\rm (a)]$f:\Z\to \Z$, $f(n)=n+1$ is a homeomorphism of $\Z_\tau$ onto
itself$\,;$
\item[\rm (b)]$\Z_+$ is dense in $\Z_\tau;$
\item[\rm (c)]for every $z\in\C\setminus\{0,1\}$,
$n\mapsto z^n$ is non-continuous as a map from $\Z_\tau$ to $\C;$
\item[\rm (d)]$\Z_\tau$ has no non-trivial convergent sequences.
\end{itemize}
\end{lemma}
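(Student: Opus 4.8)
The plan is to realize $\tau$ as a Hausdorff group topology on the additive group $\Z$. If $\tau$ is such a topology, then every translation $n\mapsto n+k$ is automatically a homeomorphism, so (a) holds for $f$ and $f^{-1}$; moreover every Hausdorff topological group is completely regular, so $\tau$ is regular for free. Thus it remains to produce a Hausdorff group topology with prescribed behaviour of its neighbourhood filter $\mathcal N$ at $0$, since by translation invariance the whole topology, and each of the properties (b)--(d), is encoded by $\mathcal N$.

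I would first translate (b)--(d) into conditions on $\mathcal N$. Density of $\Z_+$ (property (b)) is equivalent, by translation invariance, to the statement that every $U\in\mathcal N$ is unbounded above: the basic neighbourhood $m+U$ of $m$ meets $\Z_+$ for all $m$ precisely when each $U$ contains arbitrarily large positive integers. For (c), continuity of $n\mapsto z^n$ in a group topology is equivalent to continuity at $0$, since $z^n=z^m z^{n-m}$ and translation is a homeomorphism. When $|z|\neq 1$ discontinuity is then immediate from (b): every neighbourhood of $0$ contains arbitrarily large $n$, along which $|z^n-1|$ stays bounded away from $0$ (it tends to $\infty$ if $|z|>1$ and to $1$ if $|z|<1$). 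When $|z|=1$ and $z\neq 1$ the map $n\mapsto z^n$ is a character, and its discontinuity for every such $z$ is exactly the assertion that $(\Z,\tau)$ is \emph{minimally almost periodic}, i.e.\ admits no non-trivial continuous character. Finally, a sequence of distinct integers $n_k$ converges to $m$ iff $n_k-m\to 0$, so (d) is equivalent to the condition that no infinite subset of $\Z$ is eventually contained in every member of $\mathcal N$.

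With these reductions, I would construct $\mathcal N$ (equivalently, a base of symmetric neighbourhoods closed under the group-topology axiom $V+V\subseteq U$) by transfinite recursion, interleaving three families of requirements: (i) keep every neighbourhood unbounded above, for (b); (ii) for each $z\in\T\setminus\{1\}$ fix some $\epsilon_z>0$ and ensure that every neighbourhood of $0$ contains an $n$ with $|z^n-1|\geq\epsilon_z$, thereby killing that character, for (c); and (iii) for each infinite $A\subseteq\Z$ place into $\mathcal N$ a set omitting infinitely many points of $A$, thereby defeating $A$ as a convergent sequence, for (d). Hausdorffness is secured by demanding $\bigcap\mathcal N=\{0\}$. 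Constructions of minimally almost periodic group topologies, and of group topologies without non-trivial convergent sequences, are by now standard (they go back to von Neumann and were developed systematically through the theory of $T$-sequences of Protasov and Zelenyuk, and work of Dikranjan and Prodanov); the task here is to run a single recursion meeting all three kinds of demands simultaneously.

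The main obstacle is exactly this simultaneity. The character-killing requirement forces the topology to be far from precompact: a precompact group topology on $\Z$ would have a monothetic compact completion, which is abelian and hence, by Pontryagin duality, carries many continuous characters, contradicting minimal almost periodicity; so the neighbourhoods must be kept thin in the almost-periodic directions. At the same time (d) forbids $\mathcal N$ from admitting any infinite set inside all its members, and (b) forbids shrinking neighbourhoods to bounded sets. Reconciling the continuum-many character constraints of (ii) with the sequence constraints of (iii), while preserving the $V+V\subseteq U$ stability and symmetry needed for a genuine Hausdorff group topology, is the delicate bookkeeping at the heart of the argument; everything else is the routine verification that the resulting $\mathcal N$ yields (a)--(d) together with regularity.
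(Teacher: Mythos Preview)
Your approach is fundamentally different from the paper's, and in fact aims at a strictly stronger statement. The paper does \emph{not} build a group topology on $\Z$. Instead it takes a bilateral weighted shift $T$ on $\ell_2(\Z)$ that is hypercyclic for the weak topology (Chan--Sanders), picks a weakly hypercyclic vector $x$, and pulls back the weak topology of $\ell_2(\Z)$ along the bijection $n\mapsto T^n x$ onto the two-sided orbit. Properties (a)--(d) then follow from concrete features of this setup: (a) because $T$ is a weak homeomorphism of the orbit, (b) from hypercyclicity of $x$, (d) from a monotonicity-of-norms argument ruling out any weakly convergent subsequence inside the orbit, and (c) by a short case analysis (for $|z|\neq1$ via discreteness of $\{z^n\}$, for $|z|=1$ via a joint-density result for hypercyclic orbits). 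Crucially, the author remarks immediately after the proof that the resulting $\tau$ is \emph{not} a group topology (addition is only separately continuous), and explicitly poses as an open curiosity whether a topology satisfying (a)--(d) and making $\Z$ a topological group exists.

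So your proposal is not just an alternative route; it is an attack on a question the paper flags as unresolved. The gap is that you never actually construct $\mathcal N$. Your reductions of (b)--(d) to conditions on the neighbourhood filter at $0$ are correct, but the ``delicate bookkeeping'' you defer is the entire content: you must exhibit a filter base of symmetric sets, stable under $V+V\subseteq U$, that simultaneously kills every character $z\in\T\setminus\{1\}$ and every candidate convergent sequence, while remaining Hausdorff and non-discrete. The literature you invoke does not hand this over: Protasov--Zelenyuk $T$-sequences are designed to \emph{create} convergent sequences, not suppress them, and the classical topology without convergent sequences on $\Z$ (the Bohr topology) makes \emph{all} characters continuous, the exact opposite of what you need. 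Absent an explicit construction reconciling these opposing demands, what you have is a programme, not a proof.
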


\begin{proof}Consider the Hilbert space $\ell_2(\Z)$ and the
bilateral weighted shift $T\in L(\ell_2(\Z))$ given by
$Te_n=e_{n-1}$ if $n\leq 0$ and $Te_n=2e_{n-1}$ if $n>0$, where
$\{e_n\}_{n\in\Z}$ is the canonical orthonormal basis in
$\ell_2(\Z)$. Symbol $\hh_\sigma$ stands for $\ell_2(\Z)$ equipped
with its weak topology $\sigma$. Clearly, $T\in GL(\ell_2(\Z))$ and
therefore $T\in GL(\hh_\sigma)$. According to Chan and Sanders
\cite{cs}, there is $x\in\ell_2(\Z)$ such that the set
$O=\{T^nx:n\in\Z_+\}$ is dense in $\hh_\sigma$. Then
$Y=\{T^nx:n\in\Z\}$ is also dense in $\hh_\sigma$. We equip $Y$ with
the topology inherited from $\hh_\sigma$ and transfer it to $\Z$ by
declaring the bijection $n\mapsto T^nx$ from $\Z$ to $Y$ a
homeomorphism.

Since $\sigma$ is a completely regular topology, so is the just
defined topology $\tau$ on $\Z$. Since $T\in GL(\hh_\sigma)$ and $Y$
is a subset of $\ell_2(\Z)$ invariant for both $T$ and $T^{-1}$, $T$
is a homeomorphism from $Y$ onto itself. Since $T(T^nx)=T^{n+1}x$,
it follows that $f$ is a homeomorphism of $\Z_\tau$ onto itself.
Density of $O$ in $\hh_\sigma$ implies the density of $\Z_+$ in
$\Z_\tau$.

Observe that the sequence $\{\|T^nx\|\}_{n\in\Z}$ is strictly
increasing and $\|T^nx\|\to\infty$ as $n\to+\infty$. Indeed, the
inequality $\|Tu\|\geq \|u\|$ for $u\in\ell_2(\Z)$ follows from the
definition of $T$. Hence $\{\|T^nx\|\}_{n\in\Z}$ is increasing.
Assume that $\|T^{n+1}x\|=\|T^nx\|$ for some $n\in\Z$. Then, by
definition of $T$, $T^nx$ belongs to the closed linear span $L$ of
$e_n$ with $n<0$. The latter is invariant for $T$ and therefore
$T^mx\in L$ for $m\geq n$, which is incompatible with the
$\sigma$-density of $O$. Next, if $\|T^nx\|$ does not tend to
$\infty$ as $n\to+\infty$, the sequence $\{\|T^nx\|\}_{n\in\Z_+}$ is
bounded. Since every bounded subset of $\ell_2(\Z)$ is
$\sigma$-nowhere dense, we have again obtained a contradiction with
the $\sigma$-density of $O$.

In order to show that $X$ has no non-trivial convergent sequences,
it suffices to show that $Y$ has no non-trivial convergent
sequences. Assume that $\{T^{n_k}x\}_{k\in\Z_+}$ is a non-trivial
convergent sequence in $Y$. Without loss of generality, we can
assume that the sequence $\{n_k\}$ of integers is either strictly
increasing or strictly decreasing. If $\{n_k\}$ is strictly
increasing the above observation ensures that
$\|T^{n_k}x\|\to\infty$ as $k\to\infty$. Since every
$\sigma$-convergent sequence is bounded, we have arrived to a
contradiction. If $\{n_k\}$ is strictly decreasing, then by the
above observation, the sequence $\{\|T^{n_k}x\|\}$ of positive
numbers is also strictly decreasing and therefore converges to
$c\geq 0$. Then $\|T^lx\|>c$ for every $l\in\Z$. Since
$\{T^{n_k}x\}$ $\sigma$-converges to $T^mx\in Y$, the upper
semicontinuity of the norm function with respect to $\sigma$ implies
that $\|T^mx\|\leq c$ and we have arrived to a contradiction.

Finally, let $z\in\C\setminus\{0,1\}$ and $f:\Z\to \C$, $f(n)=z^n$.
It remains to show that $f$ is not continuous as a function on
$\Z_\tau$. Equivalently, it is enough to show that the function
$g:Y\to\C$, $g(T^nx)=z^n$ is non-continuous. Assume the contrary.
First, consider the case $|z|\neq 1$. In this case the the topology
on the set $M=\{z^n:n\in\Z\}$ inherited from $\C$ is the discrete
topology. Continuity of the bijection $g:Y\to M$ implies then that
$Y$ is also discrete, which is not the case: the density of $Y$ in
$\hh_\sigma$ ensures that $Y$ has no isolated points. It remains to
consider the case $|z|=1$, $z\neq1$. In this case the closure $G$ of
$\{z^n:n\in \Z\}$ is a closed subgroup of the compact abelian
topological group $\T$. Since $x$ is a hypercyclic vector for $T$
acting on $\hh_\sigma$ and $z$ generates the compact abelian
topological group $G$, \cite[Corollary~4.1]{66} implies that
$\{(T^nx,z^n):n\in\Z_+\}$ is dense in $\hh_\sigma\times G$. Hence
the graph of $g$ is dense in $Y\times G$. Since $G$ is not a
singleton, the latter is incompatible with the continuity of $g$.
The proof is complete.
\end{proof}

\begin{remark}\label{uuu}
It is easy to see that the topology $\tau$ on $\Z$ constructed in
the proof of Lemma~\ref{m2} does not agree with the group structure.
That is $\Z_\tau$ is not a topological group. Indeed, it is easy to
see that the group operation $+$ is only separately continuous on
$\Z_\tau$, but not jointly continuous. As a matter of curiosity, it
would be interesting to find out whether there exists a topology
$\tau$ on $\Z$ satisfying all conditions of Lemma~\ref{m2} and
turning $\Z$ into a topological group.
\end{remark}

\section{Isomorphisms of countably dimensional spaces in ${\cal M}$}

The following theorem is Theorem~3.1 in \cite{ss}.

\begin{theorem}\label{omeg} The group $GL(\omega)$ acts transitively on
the set ${\cal E}(\omega)$ of dense countably dimensional subspaces
of $\omega$.
\end{theorem}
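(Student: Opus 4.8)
The plan is to prove that every $E\in{\cal E}(\omega)$ is $GL(\omega)$-equivalent to the fixed subspace $\Phi_0=\spann\{u_n:n\in\N\}$, where $u_n\in\omega$ denotes the $n$-th canonical unit vector. Since ``lying in one $GL(\omega)$-orbit'' is an equivalence relation, transitivity on ${\cal E}(\omega)$ is immediate once, for each $E$, one produces $S\in GL(\omega)$ with $S(\Phi_0)=E$ (for $E,F\in{\cal E}(\omega)$ the operator $S_FS_E^{-1}$ then carries $E$ onto $F$). First I would record the description of density in $\omega$. Writing $\pi_n:\omega\to\K$ for the coordinate functionals and $\pi^{(n)}=(\pi_1,\dots,\pi_n):\omega\to\K^n$, every element of $\omega'$ is a finite linear combination of the $\pi_n$, so $E$ is dense if and only if no non-zero functional in $\omega'$ annihilates $E$, which in turn is equivalent to the map $\pi^{(n)}|_E:E\to\K^n$ being onto for every $n\in\N$.

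The core of the argument is a normal form: every $E\in{\cal E}(\omega)$ has a Hamel basis $\{v_n\}_{n\in\N}$ in full-pivot echelon form, meaning $(v_n)_i=0$ for $i<n$ and $(v_n)_n=1$. Granting this, I would define $S:\omega\to\omega$ by $Sx=\sum_n x_nv_n$. The echelon condition makes the $i$-th coordinate of $Sx$ equal to the finite sum $\sum_{n\le i}x_n(v_n)_i$, so the series converges in $\omega$ and $S$ is a continuous operator whose matrix is lower triangular with unit diagonal. Solving $Sx=y$ by forward substitution, $x_i=y_i-\sum_{n<i}(v_n)_i x_n$, exhibits $S$ as a bijection of $\omega$ whose inverse again depends, coordinatewise, on finitely many coordinates of $y$, hence is continuous; thus $S\in GL(\omega)$. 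Since $x=\sum_n x_nu_n$ in the product topology for every $x\in\omega$, we have $Su_n=v_n$ and therefore $S(\Phi_0)=\spann\{v_n\}=E$, as required.

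Constructing the echelon basis is the step I expect to be the main obstacle, because the two demands on $\{v_n\}$ pull against each other: surjectivity of $\pi^{(n)}|_E$ easily yields, for each $n$, some $E$-vector whose first $n$ coordinates are $(0,\dots,0,1)$, hence a candidate pivot of index $n$, but a careless choice only spans a proper dense subspace of $E$ (the echelon expansion of a given $e\in E$ converges in $\omega$ yet may have infinitely many non-zero terms). I would fix a Hamel basis $\{e_j\}_{j\in\N}$ of $E$ and build the $v_n$ by a priority recursion. At stage $m$ I reduce each still-unabsorbed $e_j$ modulo the already chosen $v_1,\dots,v_{m-1}$ (a finite triangular computation); if some reduced target has leading index exactly $m$, I take the oldest such, suitably normalized, as $v_m$, thereby placing that $e_j$ into $\spann\{v_1,\dots,v_m\}$; otherwise I fill $v_m$ with any density-supplied pivot of index $m$. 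Every coordinate thus eventually becomes a pivot, so the system is full-pivot echelon. The delicate point, which is the technical heart, is that the recursion absorbs every $e_j$: a target is postponed at a stage equal to its current reduced leading index only when an older target is captured there, after which its leading index strictly increases; since only finitely many targets have higher priority than $e_j$, it can be preempted only finitely often and is therefore captured eventually. Verifying this bounded-preemption bookkeeping, together with the easy observation that the density-fills never lower the leading index of a pending target, is the part of the proof that requires care.
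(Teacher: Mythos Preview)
The paper does not prove this theorem; it merely quotes it as Theorem~3.1 of \cite{ss}, so there is no in-paper argument to compare against. Your self-contained proof is sound. The echelon construction is the right idea, and your priority recursion does what you claim: after reduction modulo $v_1,\dots,v_{m-1}$ every pending target has leading index $\geq m$, so the only way $e_j$ is skipped at a stage where its reduced leading index equals $m$ is that some $e_i$ with $i<j$ is captured there; this can happen at most $j-1$ times, and between such events the leading index of $r_j$ stays constant (density-fills occur only when no pending target has leading index $m$, hence leave $r_j$ unchanged). Thus $e_j$ is eventually captured or its reduction becomes zero, either way landing in $\spann\{v_1,\dots,v_m\}$. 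The lower-triangular unit-diagonal operator $S$ you build is then indeed in $GL(\omega)$ with $S(\Phi_0)=E$. One cosmetic point: you should record explicitly that each $v_m$ lies in $E$ (immediate by induction, since both reduced targets and density-supplied pivots are $E$-linear combinations of $E$-vectors), so that $\spann\{v_m\}\subseteq E$; together with the absorption of every $e_j$ this gives $\spann\{v_m\}=E$.
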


\begin{corollary}\label{isisis} Every two countably dimensional
spaces in ${\cal M}_0$ are isomorphic.
\end{corollary}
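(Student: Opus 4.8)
The plan is to reduce the statement to Theorem~\ref{omeg} by way of the embedding characterization of weak topologies recalled in Section~\ref{s1}. First I would invoke the fact, stated there, that a metrizable infinite dimensional topological vector space has weak topology if and only if it is isomorphic to a dense linear subspace of $\omega=\K^\N$. By Remark~\ref{MJ}, membership in ${\cal M}_0$ is precisely the condition that the topology be weak, so every infinite dimensional $X\in{\cal M}_0$ carries a linear homeomorphism $\iota_X$ onto a dense linear subspace of $\omega$.

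Next I would observe that when $\dim X=\aleph_0$, the image $\iota_X(X)$ is a dense linear subspace of $\omega$ of algebraic dimension $\aleph_0$, since an isomorphism preserves algebraic dimension and density is already guaranteed by the characterization. Hence $\iota_X(X)\in{\cal E}(\omega)$. Applying this to both of the given countably dimensional spaces $X,Y\in{\cal M}_0$ produces two members $\iota_X(X),\iota_Y(Y)\in{\cal E}(\omega)$.

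Finally, Theorem~\ref{omeg} supplies $S\in GL(\omega)$ with $S(\iota_X(X))=\iota_Y(Y)$. The restriction $S|_{\iota_X(X)}$ is then a linear homeomorphism of $\iota_X(X)$ onto $\iota_Y(Y)$, and composing with $\iota_X$ and $\iota_Y^{-1}$ yields an isomorphism $X\to Y$ of topological vector spaces.

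The argument is essentially a two-line deduction once the two external inputs are assembled, so there is no substantial obstacle beyond invoking them correctly. The only point meriting a moment's care is verifying that the embedding delivered by the weak-topology characterization has dense image of full countable dimension; but this is immediate, so the transitivity of $GL(\omega)$ on ${\cal E}(\omega)$ from Theorem~\ref{omeg} does all the real work.
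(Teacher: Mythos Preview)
Your proposal is correct and follows essentially the same route as the paper: identify each countably dimensional space in ${\cal M}_0$ with a dense countably dimensional subspace of $\omega$ (the paper phrases this as ``the completions of both $X$ and $Y$ are isomorphic to $\omega$''), and then invoke Theorem~\ref{omeg}. Your version is simply more explicit about the embedding and the final composition.
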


\begin{proof} Let $X$ and $Y$ be two countably dimensional
spaces in ${\cal M}_0$. Since $X$ and $Y$ are metrizable and sport
weak topologies, the completions of both $X$ and $Y$ are isomorphic
to $\omega$. Thus, without loss of generality, both $X$ and $Y$ are
dense countably dimensional linear subspaces of $\omega$. It remains
to apply Theorem~\ref{omeg}.
\end{proof}

\begin{lemma}\label{icod} Let $X$ be a Fr\'echet space isomorphic to $\omega$,
$E$ and $F$ be countably dimensional subspaces of $X$ such that both
$X/\overline{E}$ and $X/\overline{F}$ are infinite dimensional. Then
there is $T\in GL(X)$ such that $T(E)=F$.
\end{lemma}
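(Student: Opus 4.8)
The plan is to reduce everything to the transitivity of $GL(\omega)$ on dense countably dimensional subspaces (Theorem~\ref{omeg}), exploiting the fact that in $\omega$ every infinite dimensional closed subspace is again isomorphic to $\omega$ and is complemented. Since $X$ is isomorphic to $\omega$, I may assume $X=\omega$ outright. Both $\overline E$ and $\overline F$ are closed subspaces of $X$ containing the countably dimensional spaces $E$ and $F$, hence are infinite dimensional; being closed subspaces of a space carrying a weak topology, they again carry a weak topology, and therefore, being infinite dimensional Fr\'echet spaces with weak topology, each is isomorphic to $\omega$. Moreover, as subspaces isomorphic to $\omega$ they are complemented, so I can write $X=\overline E\oplus U$ and $X=\overline F\oplus V$ with $U,V$ closed. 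By hypothesis $U\cong X/\overline E$ and $V\cong X/\overline F$ are infinite dimensional, so $U$ and $V$ are infinite dimensional closed subspaces of $X$ and hence, by the same argument, also isomorphic to $\omega$.

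Next I would move $E$ into $\overline F$ and apply transitivity. Fix any isomorphism $\phi:\overline E\to\overline F$ (both are isomorphic to $\omega$). Since $\phi$ is a homeomorphism carrying the dense subspace $E$ of $\overline E$ onto a dense subspace of $\overline F$, the image $\phi(E)$ is a dense countably dimensional subspace of $\overline F$, and $F$ is such a subspace by construction. Applying Theorem~\ref{omeg} to $\overline F\cong\omega$, I obtain $\psi\in GL(\overline F)$ with $\psi(\phi(E))=F$; thus $\Phi=\psi\phi:\overline E\to\overline F$ is an isomorphism with $\Phi(E)=F$. Choosing in addition any isomorphism $\chi:U\to V$ (possible since $U\cong\omega\cong V$), I assemble
$$
T=\Phi P+\chi Q,
$$
where $P:X\to\overline E$ and $Q={\rm Id}-P:X\to U$ are the continuous projections of the decomposition $X=\overline E\oplus U$. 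Then $T$ is continuous, and it is invertible because the analogous formula with $\Phi^{-1}$, $\chi^{-1}$ and the projections of $X=\overline F\oplus V$ furnishes a continuous two-sided inverse; hence $T\in GL(X)$. Finally, for $e\in E\subseteq\overline E$ we have $Pe=e$ and $Qe=0$, so $Te=\Phi e$ and $T(E)=\Phi(E)=F$, as required.

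The bookkeeping is routine; the only points needing care are the two structural inputs invoked above — that an infinite dimensional closed subspace of $\omega$ is itself isomorphic to $\omega$ and is complemented — which I would justify through the ``weak topology $\Rightarrow$ isomorphic to $\omega$'' and ``copy of $\omega$ is complemented'' facts already recorded in the introduction. The genuinely substantive step is the appeal to Theorem~\ref{omeg}, which is exactly what corrects the discrepancy between $\phi(E)$ and $F$ inside $\overline F$; everything else is transport of structure along the two complemented decompositions.
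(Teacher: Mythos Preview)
Your proof is correct and follows essentially the same route as the paper's: both arguments observe that $\overline E$, $\overline F$ and their complements are each isomorphic to $\omega$, then use Theorem~\ref{omeg} to match $E$ with $F$ inside a copy of $\omega$ and an arbitrary isomorphism on the complementary factor. The paper compresses your explicit block-diagonal construction of $T=\Phi\oplus\chi$ into a single ``without loss of generality $X=\omega\times\omega$ with $E,F$ dense in $\omega\times\{0\}$'' step, but the content is identical.
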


\begin{proof}Since, up to an isomorphism, there is only one infinite
dimensional Fr\'echet space whose topology is weak (it is $\omega$),
each of the spaces $\overline{E}$, $\overline{F}$,
$\omega/\overline{E}$ and $\omega/\overline{F}$ is isomorphic to
$\omega$. Since every closed linear subspace of $\omega$ is
complemented \cite{bonet}, $X=\overline{E}\oplus
G=\overline{F}\oplus H$, where $G$ and $H$ are isomorphic to
$\omega$. Thus we can, without loss of generality, assume that
$X=\omega\times\omega$ and $E$ and $F$ are dense (countably
dimensional) subspaces of $\omega\times\{0\}$. By
Theorem~\ref{omeg}, there is $R\in GL(\omega)$ such that $(R\oplus
I)(E)=F$. Hence $R\oplus I$ is a required isomorphism.
\end{proof}

\subsection{Isomorphisms of spaces in ${\cal M}_1$}

The following theorem, generalizing the main result of \cite{gri2},
is Theorem~1.5 in \cite{ss}.

\begin{theorem}\label{GR} Let $X$ be a Fr\'echet space, $p$ be a non-trivial
continuous seminorm on $X$, and $A$ and $B$ be two countable dense
subsets of $X$ such that both $A$ and $B$ are $p$-independent. Then
there exists $R\in GL(X)$ such that $R(A)=B$ and $Rx=x$ for every
$x\in \ker p$.
\end{theorem}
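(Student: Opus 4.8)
The plan is to build $R$ as the limit of an infinite product $R=\lim_{k\to\infty}W_k\cdots W_1$ of invertible operators $W_k=I+T_k\in GL(X)$, each fixing $\ker p$ pointwise, produced by a back-and-forth argument. First I would fix an increasing sequence $\{p_n\}_{n\in\N}$ of seminorms defining the topology of $X$ with $p_1=p$; since then $p_1\le p_n$, we obtain the key uniform bound $p_n^*(f)\le p^*(f)$ for every $f\in X'_p$ and every $n$. Enumerate $A=\{a_i\}$ and $B=\{b_i\}$. At stage $k$ I maintain finite sets $A_k\subseteq A$, $B_k\subseteq B$ and the operator $R_k=W_{k-1}\cdots W_1$, together with the invariant that $R_k$ restricts to a bijection $A_k\to B_k$ fixing these already-matched vectors and that $B_k$ (a finite subset of the $p$-independent set $B$) is $p$-independent. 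On odd (``forth'') stages I take the least-index $a\in A\setminus A_k$ and route it into $B$; on even (``back'') stages I take the least-index $b\in B\setminus B_k$ and force it into the image, working on the inverse side. This guarantees $R(A)\subseteq B$ and $B\subseteq R(A)$ in the limit.

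For a single forth move, put $y=R_k(a)$ and, using density of $B$, choose a target $b\in B\setminus B_k$ as close to $y$ as I please. Because $B$ is $p$-independent, $b$ is $p$-independent from $B_k$, and for $b$ close enough to $y$ the same holds for $y$; hence (by Lemma~\ref{bububu}, or directly by Hahn--Banach on $X/\ker p$) there is $f\in X'_p$ with $f(y)=1$ and $f$ vanishing on $\spann(B_k)$. Set $T_k=f\otimes(b-y)$, i.e. $T_kx=f(x)(b-y)$, and $W_k=I+T_k$. Then $W_k(y)=b$, $W_k$ fixes $\spann(B_k)$ and, since $p^*(f)<\infty$ forces $f|_{\ker p}=0$, it fixes $\ker p$. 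By Lemma~\ref{nuc}, $W_k$ is invertible once $\widehat p_n(f,b-y)=p_n^*(f)p_n(b-y)<1$ for all $n$; after $f$ is fixed its norm $p^*(f)$ is a finite number, so choosing $b$ with $p_n(b-y)<2^{-k}/(1+p^*(f))$ for all $n\le k$ (possible by density, since these are finitely many seminorms) gives $\widehat p_n(f,b-y)\le 2^{-k}$ for every fixed $n$ and all $k\ge n$. The back step is identical, performed on $R_k^{-1}$: choose $a\in A\setminus A_k$ near $R_k^{-1}(b)$ and build the analogous small invertible $W_k$ fixing the matched vectors and $\ker p$.

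Convergence and invertibility then follow from $\sum_{k\ge n}\widehat p_n(f_k,\cdot)<\infty$: for each $n$ the tail product $\lim_j W_j\cdots W_n$ is a perturbation of the identity controlled as in Lemma~\ref{nuc}, so it converges and is invertible, and $R$ differs from it by finitely many invertible factors; thus $R\in GL(X)$, and as a pointwise limit of operators fixing $\ker p$ it fixes $\ker p$. The point that upgrades approximate to exact matching is \emph{freezing}: every $W_j$ with $j>k$ fixes $\spann(B_k)$, so once $a$ is matched to $b$ at stage $k$ the value $R_j(a)=b$ never changes and $R(a)=b$ exactly; symmetrically the inverse moves fix $\spann(A_k)$, so each $b$ keeps its preimage. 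Hence $R(A)=B$ on the nose. The main obstacle I anticipate is exactly this simultaneous bookkeeping: keeping each perturbation small enough in every seminorm for the product to converge to an \emph{invertible} operator even though the biorthogonal functionals $f$ may have large norm $p^*(f)$, while freezing already-matched vectors so that the set equality $R(A)=B$ holds exactly rather than merely in the closure. Arranging $p_1=p$ (giving $p_n^*\le p^*$) and fixing each functional \emph{before} selecting its close-by target are what render the two requirements compatible.
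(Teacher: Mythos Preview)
The paper does not prove this theorem; it is quoted as Theorem~1.5 of \cite{ss}. Your back-and-forth via rank-one perturbations fixing $\ker p$ is the standard route and is essentially what underlies the paper's proof of the more general Theorem~\ref{GRGRGR}. There the construction is organized \emph{additively}: one builds $R=I+T$ with $T_k=\sum_{j\le k} f_j(\cdot)v_j$ and invokes the ready-made Lemma~\ref{step1} (also imported from \cite{ss}) at each step, whereas you compose rank-one perturbations \emph{multiplicatively}. Both schemes work; the additive one has the advantage that Lemma~\ref{nuc} applies verbatim to give invertibility of the limit.

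Two places in your write-up need tightening. First, the claim that $y=R_k(a)$ is $p$-independent from $B_k$ should not be argued via ``$b$ close to $y$ has this property, hence so does $y$'': that is circular, since you fix $f$ (which requires this independence) \emph{before} choosing $b$. The clean reason is that each $W_j$ fixes $\ker p$, so $R_k$ induces a linear bijection on $X/\ker p$ carrying $\spann(A_k)+\ker p$ onto $\spann(B_k)+\ker p$; since $a\notin\spann(A_k)+\ker p$ by $p$-independence of $A$, one gets $y\notin\spann(B_k)+\ker p$. Second, Lemma~\ref{nuc} as stated concerns $I+\sum_n f_n(\cdot)x_n$, not an infinite product $\prod_k(I+f_k\otimes v_k)$. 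Your summability $\sum_{k\ge n}\widehat p_n(f_k,v_k)<\infty$ does yield convergence and invertibility of the product, but this is a separate (routine) estimate---expand the partial products and bound the cross-terms, or run the Neumann-series argument on each tail $\prod_{k\ge N}W_k$---rather than a direct citation of that lemma.
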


\begin{corollary}\label{GR2} Let $E,F\in{\cal M}_1$ be countably
dimensional. Then $E$ and $F$ are isomorphic if and only if their
completions $\overline{E}$ and $\overline{F}$ are isomorphic.
\end{corollary}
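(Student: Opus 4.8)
The plan is to prove the two implications separately; the forward one is routine and the reverse one carries all the content. For the forward implication, if $\phi\colon E\to F$ is an isomorphism, then $\phi$ and $\phi^{-1}$ are continuous linear maps, hence uniformly continuous for the additive uniformities of these metrizable spaces, so each extends by continuity to a continuous linear map between the completions; these extensions are mutually inverse (they are so on the dense subspaces $E$ and $F$), whence $\overline E\cong\overline F$. For the reverse implication, let $\Phi\colon\overline E\to\overline F$ be an isomorphism. Replacing $F$ by $\Phi^{-1}(F)$ — which is a dense countably dimensional subspace of $X:=\overline E$, still lies in ${\cal M}_1$ (an isomorphism invariant), and is isomorphic to $F$ — I reduce everything to the claim that \emph{if $X$ is a separable infinite dimensional Fr\'echet space and $E,F$ are dense countably dimensional subspaces of $X$ with $E,F\in{\cal M}_1$, then there is $R\in GL(X)$ with $R(E)=F$}; restricting such an $R$ to $E$ then yields $E\cong F$.

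To prove the claim I first manufacture a single non-trivial continuous seminorm adapted to both subspaces. Fix an increasing sequence $\{p_n\}_{n\in\N}$ of seminorms defining the topology of $X$. Since $E\in{\cal M}_1$ (Remark~\ref{MJ}), the subspace topology of $E$ carries a continuous norm, which for a metrizable space happens exactly when some $p_{N_1}\bigr|_E$ is already a norm, i.e. $\ker p_{N_1}\cap E=\{0\}$; likewise $\ker p_{N_2}\cap F=\{0\}$ for some $N_2$. I set $p=p_N$ with $N=\max\{N_1,N_2\}$, so that $E\cap\ker p=F\cap\ker p=\{0\}$ by monotonicity. As $E$ is infinite dimensional and injects into $X/\ker p$, the subspace $\ker p$ has infinite codimension, so $p$ is non-trivial. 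I want to stress that I only need $p$ to be non-trivial on $X$; I do \emph{not} claim $\overline E\in{\cal F}_1$, which in general is false, because $\ker p$ may be non-trivial in $X$ even while meeting $E$ trivially.

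The key step is then to choose a Hamel basis $A$ of $E$ that is \emph{simultaneously dense} in $X$, and in the same way a dense Hamel basis $B$ of $F$. Granting this, $A$ is automatically $p$-independent: any nonzero linear combination of finitely many distinct elements of $A$ lies in $E\setminus\{0\}$, hence outside $\ker p$, so $p$ does not vanish on it. Thus $A$ and $B$ are countable dense $p$-independent subsets of $X$, and Theorem~\ref{GR} produces $R\in GL(X)$ with $R(A)=B$. Since $\spann A=E$ and $\spann B=F$, we get $R(E)=R(\spann A)=\spann B=F$, which finishes the claim and hence the corollary.

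The main obstacle is the very existence of a dense Hamel basis of a dense countably dimensional subspace $E\subseteq X$. I would construct it by a back-and-forth recursion using a fixed Hamel basis $\{b_k\}_{k\in\N}$ of $E$ together with a sequence $\{z_k\}_{k\in\N}$ that is dense in $E$ with each value repeated infinitely often. At the odd steps I adjoin a vector of $E$ lying within distance $1/k$ of $z_k$ and outside the (finite dimensional) span accumulated so far — possible because $E$ is infinite dimensional and dense in $X$, so one can perturb $z_k$ by a small vector of $E$ escaping that finite dimensional subspace — which forces the final sequence to be dense. At the even steps I adjoin the $b_k$ of least index not yet in the current span, which keeps the sequence linearly independent and guarantees that every $b_k$ is eventually captured, so that the span is exactly $E$. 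The resulting sequence is the required dense Hamel basis.
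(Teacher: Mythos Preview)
Your proof is correct and follows essentially the same route as the paper: reduce to $E,F$ dense in a common Fr\'echet space $X$, pick a single continuous seminorm $p$ with $E\cap\ker p=F\cap\ker p=\{0\}$, produce dense Hamel bases $A\subset E$ and $B\subset F$ (automatically $p$-independent), and apply Theorem~\ref{GR}. The only difference is cosmetic: the paper obtains the dense Hamel bases by first invoking Lemma~\ref{lile} to get dense $p$-independent sets $A_1\subset E$, $B_1\subset F$ and then extending them to Hamel bases, whereas you build the dense Hamel bases directly by a back-and-forth recursion---your way is a touch more self-contained, the paper's is shorter given the lemma is already in hand.
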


\begin{proof} If $T:E\to F$ is an isomorphism, then the extension of
$T$ by continuity provides an isomorphism of $\overline{E}$ and
$\overline{F}$. Conversely, assume that $\overline{E}$ and
$\overline{F}$ are isomorphic. Then, without loss of generality, we
can assume that both $E$ and $F$ are dense countably dimensional
subspaces of the same Fr\'echet space $X$. Since $E,F\in{\cal M}_1$,
there exist continuous seminorms $p_1$ and $p_2$ on $X$ such that
$E\cap\ker p_1=F\cap\ker p_2=\{0\}$. Hence $E\cap\ker p=F\cap\ker
p=\{0\}$, where $p=p_1+p_2$. By Lemma~\ref{lile}, there are
$A_1\subset E$ and $B_1\subset F$ such that both $A_1$ and $B_1$ are
$p$-independent and dense in $X$. Pick a dense Hamel basis $A$ in
$E$ and a dense Hamel basis $B$ in $F$ such that $A_1\subseteq A$
and $B_1\subseteq B$. Since $E\cap\ker p=F\cap\ker p=\{0\}$, $A$ and
$B$ are $p$-independent. Moreover, $A$ and $B$ are dense in $X$ and
countable. By Theorem~\ref{GR}, there is $R\in GL(X)$ such that
$R(A)=B$. Obviously, $R(E)=F$. Hence $R\bigr|_E:E\to F$ is an
isomorphism.
\end{proof}

\subsection{Isomorphisms of spaces in ${\cal M}_3$}

The following result features as Lemma~2.2 in \cite{ss}.

\begin{lemma}\label{step1}Let $\epsilon>0$, $X$ be a locally convex
space, $D$ be a Banach disk in $X$, $Y$ be a closed linear subspace
of $X$, $\Lambda\subseteq Y\cap X_D$ be a dense subset of $Y$ such
that $\Lambda$ is $p_D$-dense in $Y\cap X_D$, $p$ be a continuous
seminorm on $X$, $L$ be a finite dimensional subspace of $X$ and
$T\in L(X)$ be a finite rank operator such that $T(Y)\subseteq Y\cap
X_D$, $T(\ker p)\subseteq\ker p$ and $\ker (I+T)=\{0\}$. Then
\begin{itemize}
\item[{\rm (1)}]for every $u\in Y\cap X_D$ such that
$(u+L)\cap \ker p=\varnothing$, there are $f\in X'$ and $v\in Y\cap
X_D$ such that $p^*(f)=1$, $f\bigr|_L=0$, $p_D(v)<\epsilon$,
$(I+R)u\in \Lambda$ and $\ker(I+R)=\{0\}$, where $Rx=Tx+f(x)v;$
\item[{\rm (2)}]for every $u\in Y\cap X_D$ such that
$(u+(I+T)(L))\cap\ker p=\varnothing$, there are $f\in X'$, $a\in
\Lambda$ and $v\in Y\cap X_D$ such that $p^*(f)=1$, $f\bigr|_L=0$,
$p_D(v)<\epsilon$, $(I+R)a=u$ and $\ker(I+R)=\{0\}$, where
$Rx=Tx+f(x)v.$
\end{itemize}
\end{lemma}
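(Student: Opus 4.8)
The plan is to build the required operator as a rank-one perturbation $R=T+f\otimes v$, where $(f\otimes v)(x)=f(x)v$, splitting the work into an algebraic part (the choice of $f$, by Hahn--Banach) and an approximation part (the choice of $v$, forced by the target resp.\ source vector taken from $\Lambda$). The guiding observation, which I would establish first, is that $I+T$ is invertible: since $T$ has finite rank, $I+T$ acts as the identity modulo the finite-dimensional space $\mathrm{Range}\,T$, which it maps bijectively onto itself (injectivity being the hypothesis $\ker(I+T)=\{0\}$), so $I+T$ is a bijection of $X$. The inclusions $T(Y)\subseteq Y\cap X_D$ and $T(\ker p)\subseteq\ker p$ make $Y$, $Y\cap X_D$ and $\ker p$ invariant under $T$, and the same argument applied to the restrictions shows $I+T$ is bijective on each of them; in particular $(I+T)(\ker p)=\ker p$ and $(I+T)(Y\cap X_D)=Y\cap X_D$. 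Finally, as $D$ is a Banach disk and $Y$ is closed, $(Y\cap X_D,p_D)$ is a Banach space on which $T$ is $p_D$-bounded (expand $T$ through finitely many vectors of $T(Y)\subseteq X_D$ and coefficient functionals from $X'$), so by the open mapping theorem $(I+T)^{-1}$ is $p_D$-continuous there. Hence, once $f$ is fixed, the functional $g=f\circ(I+T)^{-1}$ is $p_D$-continuous on $Y\cap X_D$.

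Next I would extract $f$ from the coset hypothesis. For (1), the condition $(u+L)\cap\ker p=\varnothing$ says exactly that the image of $u$ in the normed space $X/\ker p$ avoids the finite-dimensional, hence closed, image of $L$, so Hahn--Banach furnishes $f\in X'$ with $p^*(f)=1$, $f|_L=0$ and $f(u)\ne0$. For (2), I would first use that $I+T$ is a bijection of $\ker p$ to rewrite the hypothesis: putting $a_0=(I+T)^{-1}u\in Y$, one has $u+(I+T)(\ell)=(I+T)(a_0+\ell)$ and $(I+T)^{-1}(\ker p)=\ker p$, so $(u+(I+T)(L))\cap\ker p=\varnothing$ is equivalent to $(a_0+L)\cap\ker p=\varnothing$; the same Hahn--Banach step then yields $f$ with $p^*(f)=1$, $f|_L=0$ and $f(a_0)\ne0$, that is $g(u)=f(a_0)\ne0$.

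With $f$ in hand, $v$ is forced. In (1) I put $v=f(u)^{-1}(\lambda-(I+T)u)$ with $\lambda\in\Lambda$; then $(I+R)u=\lambda\in\Lambda$ automatically, $v\in Y\cap X_D$, and $p_D(v)<\epsilon$ as soon as $p_D(\lambda-(I+T)u)<|f(u)|\epsilon$. In (2) I put $v=f(a)^{-1}(u-(I+T)a)$ with $a\in\Lambda$, so that $(I+R)a=u$, and $p_D(v)<\epsilon$ once $(I+T)a$ is $p_D$-close to $u$ (which, since $(I+T)^{-1}$ and $f$ are $p_D$-continuous, forces $a$ near $a_0$ and $f(a)$ near $g(u)\ne0$, keeping the denominator bounded away from $0$). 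The key computation is that $\ker(I+R)\ne\{0\}$ forces a nonzero $x$ with $(I+T)x=-f(x)v$; applying $(I+T)^{-1}$ and then $f$ collapses this to the single scalar condition $g(v)=-1$, which in (1) reads $g(\lambda)=0$ and in (2) reads $g(u)=0$.

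The main obstacle is to secure injectivity simultaneously with the approximation, and here the two parts behave differently. In (2) it comes for free: we arranged $g(u)\ne0$, so $g(v)=-1$ is impossible and $\ker(I+R)=\{0\}$ automatically, while $p_D$-density of $\Lambda$ lets me choose $a$ with $(I+T)a$ arbitrarily $p_D$-close to $u$. In (1) I must keep $g(\lambda)\ne0$ while approximating; this is exactly where the $p_D$-continuity of $g$ pays off, for $g((I+T)u)=f(u)\ne0$ makes $\{g\ne0\}$ a $p_D$-neighbourhood of $(I+T)u$, and intersecting it with the ball $\{z:p_D(z-(I+T)u)<|f(u)|\epsilon\}$ gives a nonempty $p_D$-open set meeting the $p_D$-dense set $\Lambda$ in an admissible $\lambda$. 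Thus the only genuinely delicate points are the reduction of injectivity to the condition $g(v)=-1$ and the $p_D$-continuity of $g$ obtained from the open mapping theorem; the remaining verifications are routine.
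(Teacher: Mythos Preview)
Your argument is correct. The paper does not actually prove this lemma: it merely cites it as Lemma~2.2 of \cite{ss}, so there is no in-paper proof to compare against. Your approach is the natural one and matches what one would expect in the cited source: use Hahn--Banach on $X/\ker p$ to produce $f$ with $p^*(f)=1$, $f|_L=0$ and $f(u)\ne 0$ (resp.\ $f(a_0)\ne 0$ with $a_0=(I+T)^{-1}u$), let $v$ be forced by the equation $(I+R)u=\lambda$ (resp.\ $(I+R)a=u$), and reduce $\ker(I+R)=\{0\}$ to the single scalar obstruction $g(v)\ne -1$ with $g=f\circ(I+T)^{-1}$.

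A couple of minor points worth making explicit, since your sketch glosses over them. First, your phrase ``$I+T$ acts as the identity modulo $\mathrm{Range}\,T$, which it maps bijectively onto itself'' is shorthand for the purely algebraic fact that on any $T$-invariant subspace $Z$ the restriction $T|_Z$ is still finite rank, so writing $Z=V\oplus W$ with $V=T(Z)$ one sees $I+T$ is upper triangular with invertible diagonal blocks once $\ker(I+T)=\{0\}$; this is what gives bijectivity of $I+T$ on each of $X$, $Y$, $Y\cap X_D$ and $\ker p$ simultaneously. Second, the $p_D$-continuity of $f$ (needed both for the continuity of $g$ and for controlling $f(a)$ near $f(a_0)$ in part~(2)) follows from $f\in X'$ and the boundedness of $D$ in $X$; you use this implicitly. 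With those details filled in, the argument is complete.
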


\begin{theorem}\label{GRGRGR} Let $X$ be a Fr\'echet space, whose
topology is defined by an increasing sequence $\{p_n\}_{n\in\N}$ of
seminorms. Assume also that $\{X_n\}_{n\in\Z_+}$ is a sequence of
closed infinite dimensional linear subspaces of $X$ such that
$X_0=X$ and $X_n\subseteq X_{n-1}\cap\ker p_n$ for every $n\in\N$.
For each $n\in\Z_+$ let $A_n$ and $B_n$ be $p_{n+1}$-independent
countable dense subsets of $X_n$. Then there exists $R\in GL(X)$
such that $R(A_n)=B_n$ for every $n\in\Z_+$.
\end{theorem}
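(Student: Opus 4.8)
The plan is to produce $R$ as $I+T$, where $T=\sum_j f_j(\cdot)\,v_j$ is an infinite sum of rank-one operators assembled by iterating Lemma~\ref{step1} in a back-and-forth scheme. First I would apply Lemma~\ref{l22} to the filtration $\{X_n\}$ and the sets $A_n,B_n$ to fix a single Banach disk $D$ in $X$ such that every $A_n$ and $B_n$ is contained in $X_D$ and is $p_D$-dense in $X_D\cap X_n$; this $D$ is what lets one norm $p_D$ serve all levels at once. Then I would fix an enumeration of \emph{tasks}: one forward task for each element of each $A_n$ (demanding $Ra\in B_n$) and one backward task for each element of each $B_n$ (demanding $b\in R(A_n)$), interleaved in the usual back-and-forth order so that each task is eventually treated.

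At step $j$, treating a task at level $n=n_j$, I would invoke Lemma~\ref{step1} with $p=p_{n+1}$, $Y=X_n$, the disk $D$, $\Lambda=B_n$ for a forward task or $\Lambda=A_n$ for a backward task, with $T=T_{j-1}=\sum_{i<j}f_i(\cdot)v_i$, and with $L=L_{j-1}$ a finite-dimensional space containing all earlier $v_i$ and all vectors whose image has already been matched. The point that makes the induction run is that the invariance hypotheses of Lemma~\ref{step1} hold \emph{automatically} from the kernel monotonicity $X_m\subseteq\ker p_m$ together with $p_m\le p_{m+1}$: since each $f_i$ satisfies $p_{n_i+1}^*(f_i)<\infty$ it kills $\ker p_{n_i+1}$, and each $v_i\in X_{n_i}$, so a short case split on the size of $n_i$ relative to $n$ gives $T_{j-1}(X_n)\subseteq X_n\cap X_D$ and $T_{j-1}(\ker p_{n+1})\subseteq\ker p_{n+1}$. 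Injectivity $\ker(I+T_{j-1})=\{0\}$ is part of the output of Lemma~\ref{step1} and is thus preserved, and the $p_{n+1}$-independence of $A_n,B_n$ supplies the genericity conditions $(u+L)\cap\ker p=\varnothing$ needed to apply the lemma. After each step I would freeze the newly matched pair by enlarging $L$, so that all later $f_{j'}$ vanish on it; this makes the matching permanent and hence $R(A_n)=B_n$ for every $n$.

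For convergence, each step leaves me free to take $p_D(v_j)<\epsilon_j$ with $\epsilon_j$ arbitrarily small. Two estimates drive everything: $p_k(v_j)=0$ for every $k\le n_j$ (because $v_j\in X_{n_j}\subseteq\ker p_{n_j}$), and $p_k^*(f_j)\le p_{n_j+1}^*(f_j)=1$ for every $k\ge n_j+1$ (as $p_k^*$ is non-increasing in $k$). Hence $\widehat p_k(f_j,v_j)\le p_k(v_j)\le C_k\epsilon_j$, where $C_k=\sup_{x\in D}p_k(x)<\infty$, so choosing $\sum_j\epsilon_j<\infty$ makes every $c_k=\sum_j\widehat p_k(f_j,v_j)$ finite. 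By Lemma~\ref{nuc} the series then defines a continuous operator $T$, and $R=I+T\in L(X)$ satisfies $R(A_n)=B_n$.

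The main obstacle is upgrading $R$ to an element of $GL(X)$. Because in this situation the seminorms $p_k$ are mutually non-equivalent (the kernels strictly decrease with infinite-dimensional quotients), the constants $C_k$ are unbounded, so $c_k<1$ cannot be forced for all $k$ at once and the invertibility clause of Lemma~\ref{nuc} is unavailable. I would instead obtain $R^{-1}$ as a limit: the finite-stage operators $I+T_j$ are invertible by Lemma~\ref{step1}, and a rank-one (Sherman--Morrison) analysis of how $(I+T_j)^{-1}$ differs from $(I+T_{j-1})^{-1}$ shows that, if $\epsilon_j$ is chosen small enough to keep the scalar denominator $1+f_j\big((I+T_{j-1})^{-1}v_j\big)$ away from $0$ and the rank-one increments small in every $p_k$, the inverses $(I+T_j)^{-1}$ form a Cauchy sequence in $L(X)$ converging to a continuous operator, necessarily a two-sided inverse of $R$. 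Equivalently, $R$ is then a continuous linear bijection of the Fr\'echet space $X$, and the open mapping theorem gives $R\in GL(X)$. Checking that these inverses genuinely converge—equivalently, that finite-stage injectivity and the two-sided matching force true bijectivity in the limit rather than merely dense range—is where the real work lies.
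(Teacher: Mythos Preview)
Your overall plan---Lemma~\ref{l22} to get a single Banach disk $D$, then a back-and-forth application of Lemma~\ref{step1} level by level, assembling $R=I+\sum_j f_j(\cdot)v_j$---is exactly the paper's strategy, and your verification of the invariance hypotheses $T_{j-1}(X_n)\subseteq X_n\cap X_D$, $T_{j-1}(\ker p_{n+1})\subseteq\ker p_{n+1}$ via the case split on $n_i$ versus $n$ is the right argument.

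The one place you diverge is invertibility, and here you are working much harder than necessary. You correctly spot that with the \emph{original} seminorms $p_k$ one only gets $\widehat p_k(f_j,v_j)\le C_k\,\epsilon_j$ with $C_k=\sup_{x\in D}p_k(x)$ unbounded in $k$, so the invertibility clause of Lemma~\ref{nuc} seems out of reach. The paper's fix is a one-line renormalisation: replace each $p_n$ by the equivalent seminorm $q_n$ given as the Minkowski functional of (the balanced convex hull of) $D\cup\{x:p_n(x)\le1\}$. These $q_n$ still define the topology of $X$ and have the same kernels, but now satisfy the extra inequality $q_n(x)\le p_D(x)$ for every $n$ and every $x\in X_D$. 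With this in hand, the estimate becomes
\[
\widehat q_r(f_j,v_j)\ \le\ \begin{cases}0,&r\le n_j\ \ (\text{since }v_j\in X_{n_j}\subseteq\ker q_r),\\ q_r^*(f_j)\,q_r(v_j)\le 1\cdot p_D(v_j)<\epsilon_j,&r>n_j,\end{cases}
\]
so $\widehat q_r(f_j,v_j)<\epsilon_j$ \emph{uniformly in $r$}. Choosing $\sum_j\epsilon_j<1$ then gives $c_k<1$ for every $k$, and Lemma~\ref{nuc} yields $I+T\in GL(X)$ directly. Your Sherman--Morrison route may be salvageable, but it is both harder and, as you yourself note, not fully verified; the renormalised seminorms make the whole difficulty disappear.
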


\begin{proof} By Lemma~\ref{l22}, we can find a Banach disk $D$ in
$X$ such that for every $j\in\Z_+$, both $A_j$ and $B_j$ are dense
subsets of the Banach space $(X_D\cap X_j,p_D)$. Let $q_n$ be the
Minkowski functional of $D\cup\{x\in X:p_n(x)\leq 1\}$. It is easy
to see that each $q_n$ is a seminorm equivalent to $p_n$. Hence
$\{q_n\}$ is another increasing sequence of seminorms on $X$
defining the topology of $X$ and $\ker p_n=\ker q_n$ for $n\in\N$.
In particular, $X_n\subseteq X_{n-1}\cap\ker q_n$ for every $n\in\N$
and $A_n$ and $B_n$ are $q_{n+1}$-independent countable dense
subsets of $X_n$ for $n\in\Z_+$. The point of introducing the
seminorms $q_n$ is their extra property:
$$
q_n(x)\leq p_D(x)\ \ \text{for every $n\in\N$ and $x\in X_D$.}
$$

First, observe that the sets $A_j$ are pairwise disjoint. Indeed,
let $j<l$, $x\in A_j$ and $y\in A_l$. Since $A_l\subseteq
X_l\subseteq \ker p_l$, $p_l(y)=0$. Since $A_j$ is
$p_{j+1}$-independent $p_l(x)\geq p_{j+1}(x)>0$. Thus $x\neq y$ and
therefore $A_j$ are pairwise disjoint. Similarly, $B_j$ are pairwise
disjoint. Hence for $A=\bigcup\limits_{j=0}^\infty A_j$ and
$B=\bigcup\limits_{j=0}^\infty B_j$, the maps
$$
\text{$\tau:A\to\Z_+$, $\tau(a)=j$ if $a\in A_j$ and
$\sigma:B\to\Z_+$, $\sigma(b)=j$ if $b\in B_j$}
$$
are well-defined. Fix bijections $\alpha:\N\to A$ and $\beta:\N\to
B$ and a sequence $\{\epsilon_k\}_{k\in\N}$ of positive numbers
satisfying $\sum\limits_{k=1}^\infty\epsilon_k<1$.

We shall construct inductively sequences $\{s_j\}_{j\in\N}$ and
$\{m_j\}_{j\in\N}$ of natural numbers, $\{v_j\}_{j\in\N}$ in $X$,
$\{f_j\}_{j\in\N}$ in $X'$ and $\{T_k\}_{k\in\Z_+}$ in $L(X)$ such
that $T_0=0$ and for every $k\in\N$,
\begin{align}
&T_kx=\sum_{j=1}^k f_j(x)v_j;\label{ttt0}
\\
&m_j\neq m_l\ \ \text{and}\ \ s_j\neq s_l\ \ \text{for $1\leq
j<l\leq k$};\label{ttt1}
\\
&\tau(\alpha(m_j))=\sigma(\beta(s_j))\ \ \text{for $1\leq j\leq
k$};\label{ttt2}
\\
&\{1,\dots,j\}\subseteq\{m_1,\dots,m_{2j}\}\cap\{s_1,\dots,s_{2j}\}\
\ \text{whenever $2j\leq k$};\label{ttt3}
\\
&(I+T_k)\alpha(m_j)=\beta(s_j)\ \ \text{for $1\leq j\leq k$};
\label{ttt4}
\\
&\text{$v_j\in X_{r_j}\cap X_D$, $p_D(v_j)<\epsilon_j$ and
$q_{r_j+1}^*(f_j)=1$ for $1\leq j\leq k$, where
$r_j=\tau(\alpha(m_j))$}. \label{ttt5}
\end{align}

$T_0=0$ serves as the basis of induction. Assume now that $n\in\Z_+$
and that $s_j$, $m_j$, $v_j$, $f_j$ and $T_j$ for $j\leq 2n$
satisfying (\ref{ttt0}--\ref{ttt5}) with $k\leq 2n$ are already
constructed. We shall construct $s_j$, $m_j$, $v_j$, $f_j$ and $T_j$
with $j\in\{2n+1,2n+2\}$ such that (\ref{ttt0}--\ref{ttt5}) hold for
every  $k\leq 2n+2$.

Let $j\leq 2n$ and $r\in\N$. By (\ref{ttt5}), $v_j\in X_{r_j}$ with
$r_j=\tau(\alpha(m_j))$. Since $q_r$ vanishes on $X_r$, $q_r(v_j)=0$
and therefore $\widehat q_r(f_j,v_j)=0$ if $r\leq r_j$. On the other
hand, if $r>r_j$, then $q_{r}^*(f_j)\leq q_{r_j+1}^*(f_j)=1$ and
$q_r(v_j)\leq p_D(v_j)<\epsilon_j$. Hence $\widehat
q_r(f_j,v_j)<\epsilon_j$ if $r\leq r_j$. Thus in any case
\begin{equation}\label{TT0}
\widehat q_r(f_j,v_j)<\epsilon_j\ \ \text{for every $r\in\N$}.
\end{equation}
The estimate (\ref{TT0}) and Lemma~\ref{nuc} together with the
inequality $\sum\limits_{k=1}^\infty\epsilon_k<1$ and formula
(\ref{ttt0}) ensure that $I+T_{2n}$ is invertible. Furthermore, if
$r\leq r_j$, then $v_j\in X_r\subseteq \ker q_r$, while if $r>r_j$,
then $q^*_r(f_j)$ is finite and therefore $f_j$ vanishes on $\ker
q_r\supseteq X_r$. It follows that
\begin{equation}\label{TT1}
\text{$X_r$ and $\ker q_r$ are invariant for $T_j$ for every
$r\in\N$ and each $j$.}
\end{equation}
Since each $v_j$ belongs to $X_D$, each $T_j$ takes values in $X_D$.
Now we define
\begin{equation}\label{TT2}
m_{2n+1}=\min(\N\setminus\{m_j:j\leq 2n\})\ \ \text{and}\ \
r=\tau(\alpha(m_{2n+1})).
\end{equation}
It is a routine exercise to see that all conditions of the first
part of Lemma~\ref{step1} with $u=\alpha(m_{2n+1})$, $p=q_{r+1}$,
$Y=X_r$, $L=\spann\{\alpha(m_j):j\leq 2n\}$, $T=T_{2n}$,
$\epsilon=\epsilon_{2n+1}$ and
$\Lambda=B_r\setminus\{\beta(s_j):j\leq 2n\}$ are satisfied. Thus
Lemma~\ref{step1} provides $f_{2n+1}\in X'$ and $v_{2n+1}\in X_D\cap
X_r$ such that $q_{r+1}^*(f_{2n+1})=1$, $f_{2n+1}\bigr|_L=0$,
$p_D(v_{2n+1})<\epsilon_{2n+1}$ and $(I+T_{2n+1})u\in \Lambda$,
where $T_{2n+1}x=T_{2n}x+f_{2n+1}(x)v_{2n+1}$. The inclusion
$(I+T_{2n+1})u\in \Lambda$ means that
$(I+T_{2n+1})u=\beta(s_{2n+1})$ with $s_{2n+1}\in
\N\setminus\{s_j:j\leq 2n\}$. Since $u=\alpha(m_{2n+1})$ and
$f_{2n+1}\bigr|_L=0$, (\ref{ttt4}) with $k=2n$ implies that
(\ref{ttt4}) with $k=2n+1$ is also satisfied. The equality
$\tau(\alpha(m_{2n+1}))=\sigma(\beta(s_{2n+1}))=r$ gives
(\ref{ttt2}) for $k=2n+1$. The properties (\ref{ttt0}), (\ref{ttt1})
and (\ref{ttt5}) for $k=2n+1$ are satisfied by construction. We
postpone the discussion of (\ref{ttt3}) for later.

Define
\begin{equation}\label{TT3}
s_{2n+2}=\min(\N\setminus\{s_j:j\leq 2n+1\})\ \ \text{and}\ \
r'=\sigma(\beta(s_{2n+2})).
\end{equation}
It is a routine exercise to see that all conditions of the second
part of Lemma~\ref{step1} with $u=\beta(s_{2n+2})$, $p=q_{r'+1}$,
$Y=X_{r'}$, $L=\spann\{\alpha(m_j):j\leq 2n+1\}$, $T=T_{2n+1}$,
$\epsilon=\epsilon_{2n+2}$ and
$\Lambda=A_{r'}\setminus\{\alpha(m_j):j\leq 2n+1\}$ are satisfied.
Thus Lemma~\ref{step1} provides $f_{2n+2}\in X'$, $a\in\Lambda$ and
$v_{2n+2}\in X_D\cap X_{r'}$ such that $q_{r'+1}^*(f_{2n+2})=1$,
$f_{2n+2}\bigr|_L=0$, $p_D(v_{2n+2})<\epsilon_{2n+2}$ and
$(I+T_{2n+2})a=u$, where $T_{2n+2}x=T_{2n+1}x+f_{2n+2}(x)v_{2n+2}$.
The inclusion $a\in \Lambda$ means that $a=\alpha(m_{2n+2})$ with
$m_{2n+2}\in \N\setminus\{m_j:j\leq 2n+1\}$. Since
$u=\beta(s_{2n+2})$ and $f_{2n+1}\bigr|_L=0$, (\ref{ttt4}) with
$k=2n+1$ implies that (\ref{ttt4}) with $k=2n+2$ is also satisfied.
The equality $\tau(\alpha(m_{2n+2}))=\sigma(\beta(s_{2n+2}))=r'$
gives (\ref{ttt2}) for $k=2n+2$. The properties (\ref{ttt0}),
(\ref{ttt1}) and (\ref{ttt5}) for $k=2n+2$ are satisfied by
construction. It remains to notice that (\ref{ttt3}) for $k=2n+2$
follows from (\ref{ttt3}) for $k=2n$ and from the equalities
(\ref{TT2}) and (\ref{TT3}). This concludes the inductive
construction.

Now we consider the operator $T$ given by the formula
$$
Tx=\lim_{k\to\infty}T_kx=\sum_{j=1}^\infty f_j(x)v_j.
$$
Formula (\ref{TT0}) and Lemma~\ref{nuc} show that $T$ is a
well-defined continuous linear operator on $X$ such that $I+T$ is
invertible. Passing to the limit as $k\to\infty$ in (\ref{ttt4}), we
see that $(I+T)\alpha(m_j)=\beta(s_j)$ for every $j\in\N$. By
(\ref{ttt1}) and (\ref{ttt3}), $j\mapsto m_j$ and $j\mapsto s_j$ are
bijections of $\N$ onto itself. Taking into account that
$\alpha:\N\to A$ and $\beta:\N\to B$ are also bijections, we see
that $(I+T)(A)=B$. Now according (\ref{ttt2}) we have additionally
that $(I+T)(A_j)=B_j$ for every $j\in\Z_+$. Thus $R=I+T$ is an
isomorphism we were after.
\end{proof}

\begin{corollary}\label{GRgrgr} Let $X$ be a Fr\'echet space, whose
topology is defined by an increasing sequence $\{p_n\}_{n\in\N}$ of
seminorms. Let also $E$ and $F$ be two dense countably dimensional
subspaces of $X$ such that $\overline{E\cap \ker
p_n}=\overline{F\cap \ker p_n}$ for every $n\in\N$. Then there
exists $R\in GL(X)$ such that $R(E)=F$.
\end{corollary}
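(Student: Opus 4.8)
The plan is to deduce Corollary~\ref{GRgrgr} from Theorem~\ref{GRGRGR} by reducing the hypothesis on the closures $\overline{E\cap\ker p_n}$ to the structured situation of that theorem, where one has a fixed decreasing chain $\{X_n\}$ of closed subspaces together with $p_{n+1}$-independent dense subsets of each $X_n$. First I would set $X_n=\overline{E\cap\ker p_n}=\overline{F\cap\ker p_n}$ for $n\in\N$ and $X_0=X$. The equality of closures is exactly what lets a single chain $\{X_n\}$ serve simultaneously for $E$ and $F$, so these are well-defined closed subspaces with $X_n\subseteq X_{n-1}\cap\ker p_n$, since $E\cap\ker p_n\subseteq(E\cap\ker p_{n-1})\cap\ker p_n$ and $\ker p_n$ is closed. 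The immediate difficulty is that Theorem~\ref{GRGRGR} requires each $X_n$ to be \emph{infinite dimensional} and each $A_n,B_n$ to be $p_{n+1}$-\emph{independent}, whereas a priori $E\cap\ker p_n$ need be neither dense in $X_n$ in the right sense nor $p_{n+1}$-independent, and some $X_n$ could be finite dimensional.

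The main step is therefore to manufacture, inside $E\cap\ker p_n$ and $F\cap\ker p_n$, countable dense $p_{n+1}$-independent subsets. For the indices $n$ with $X_n/(X_n\cap\ker p_{n+1})$ infinite dimensional, $p_{n+1}$ restricts to a non-trivial seminorm on the separable Fr\'echet space $X_n$, so Lemma~\ref{lile} applies: starting from the countable dense set $E\cap\ker p_n$ of $X_n$ (dense because $X_n$ is by definition the closure of $E\cap\ker p_n$), it yields a $p_{n+1}$-independent dense subset $A_n\subseteq E\cap\ker p_n$, and likewise a $B_n\subseteq F\cap\ker p_n$. I would then invoke Theorem~\ref{GRGRGR} with these data to obtain $R\in GL(X)$ with $R(A_n)=B_n$ for all such $n$. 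Since $A_n$ is a dense subset of $X_n$ that happens to be a Hamel basis is not automatic, so the cleaner route is to arrange that $A_n$ and $B_n$ are in fact dense Hamel bases of $E\cap\ker p_n$ and $F\cap\ker p_n$ respectively (padding the $p_{n+1}$-independent sets to bases as in the proof of Corollary~\ref{GR2}, using that $E\cap\ker p_n\cap\ker p_{n+1}=\{0\}$ after a suitable choice of seminorm so that the whole space is $p$-independent); this guarantees $R(E\cap\ker p_n)=F\cap\ker p_n$ and in particular $R(E)=F$ when $n=0$.

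The hard part will be handling the degenerate indices, namely those $n$ for which $X_n=X_{n-1}\cap\ker p_n$ up to finite codimension, so that Theorem~\ref{GRGRGR} cannot be applied on the nose because its hypotheses fail (finite dimensional quotients or finite dimensional $X_n$). The device to circumvent this is to pass to a \emph{subsequence} $\{p_{n_k}\}$ of the defining seminorms: since $\{p_n\}$ is increasing and defines the topology of $X$, any subsequence does too, and one may choose $n_k$ so that each successive quotient $X_{n_k}/(X_{n_k}\cap\ker p_{n_{k+1}})$ is infinite dimensional (this uses that $E$ is infinite dimensional and dense, so infinitely many of the inclusions $X_{n-1}\cap\ker p_n\supsetneq X_n$ drop an infinite-dimensional amount, or else the tail is eventually weak and handled by Corollary~\ref{isisis}/Lemma~\ref{icod}). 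Relabelling $q_k=p_{n_k}$ and $Y_k=X_{n_k}$, the chain $\{Y_k\}$ now satisfies the infinite-dimensionality hypothesis of Theorem~\ref{GRGRGR}, and $\overline{E\cap\ker q_k}=Y_k=\overline{F\cap\ker q_k}$ still holds. Applying the theorem on this subsequence yields $R\in GL(X)$ with $R(E)=F$, because $\ker q_k=\ker p_{n_k}$ and the density of $E,F$ forces $R(E)=F$ from $R$ carrying a dense Hamel basis of $E$ to one of $F$. I would close by remarking that the only genuinely new content beyond Theorem~\ref{GRGRGR} is this bookkeeping of indices plus the application of Lemma~\ref{lile} to produce the requisite $p$-independent dense sets, both of which are routine once the chain $\{X_n\}$ is in place.
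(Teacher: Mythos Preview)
There is a genuine gap in your approach to producing the sets $A_n$ and $B_n$. You propose to make each $A_n$ simultaneously a \emph{Hamel basis} of $E\cap\ker p_n$ and $p_{n+1}$-independent, justifying this by the claim that ``$E\cap\ker p_n\cap\ker p_{n+1}=\{0\}$ after a suitable choice of seminorm.'' But the seminorms are increasing, so $\ker p_{n+1}\subseteq\ker p_n$ and hence $E\cap\ker p_n\cap\ker p_{n+1}=E\cap\ker p_{n+1}$, which is in general an infinite dimensional subspace (indeed this is precisely the situation where Theorem~\ref{GRGRGR} is needed rather than Corollary~\ref{GR2}). Any Hamel basis of $E\cap\ker p_n$ must contain vectors spanning $E\cap\ker p_{n+1}\subseteq\ker p_{n+1}$, and such vectors can never belong to a $p_{n+1}$-independent set. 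Thus your $A_n$ cannot satisfy both requirements, and without $p_{n+1}$-independence Theorem~\ref{GRGRGR} does not apply.

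The paper resolves this not by asking $A_n$ to span all of $E\cap X_n$, but by choosing $A_n\subset E\cap X_n$ to be $p_{n+1}$-independent, dense in $X_n$, and such that $E\cap X_n=\spann(A_n)\oplus(E\cap X_{n+1})$: that is, $A_n$ spans only a complement of the next level. Then $E$ is the linear span of $\bigcup_n A_n$, and likewise for $F$ and the $B_n$, so $R(A_n)=B_n$ for all $n$ forces $R(E)=F$. The paper also handles the degenerate situation more simply than your subsequence device: if some $X_n$ is finite dimensional, one finds a continuous seminorm $q$ with $E\cap\ker q=F\cap\ker q=\{0\}$, placing $E,F\in{\cal M}_1$, and invokes Corollary~\ref{GR2} directly; otherwise every $X_n$ is infinite dimensional and Theorem~\ref{GRGRGR} applies on the nose.
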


\begin{proof} For $n\in\N$, let $X_n=\overline{E\cap \ker
p_n}=\overline{F\cap \ker p_n}$. If there exists $n\in\N$ such that
$X_n$ is finite dimensional, then there is a continuous seminorm $q$
on $X$ such that $E\cap\ker q=F\cap \ker q=\{0\}$. Then $E,F\in{\cal
M}_1$. By Corollary~\ref{GR2} there is an isomorphism $S:E\to F$.
The unique continuous extension of $S$ to $R\in L(X)$ is an
isomorphism such that $R(E)=F$.

It remains to consider the case when each $X_n$ is infinite
dimensional. An easy application of Lemma~\ref{lile}, allows us for
every $n\in\N$ to choose dense countable $p_{n+1}$-independent
subsets $A_n$ and $B_n$ of $X_n$ such that $E\cap
X_n=\spann(A_n)\oplus (E\cap X_{n+1})$ and $F\cap
X_n=\spann(B_n)\oplus (F\cap X_{n+1})$. By Theorem~\ref{GRGRGR},
there is $R\in GL(X)$ such that $R(A_n)=B_n$ for every $n\in\N$.
Since $E$ is the linear span of the union of $A_n$ and $F$ is the
linear span of the union of $B_n$, we have $R(E)=F$.
\end{proof}

\subsection{Isomorphisms of spaces in ${\cal M}_2$}

\begin{lemma}\label{isia} Let $X\in {\cal F}_1$, $E,F$ be dense
countably dimensional subspaces of $X\times\omega$ and
$E_0=\{u\in\omega:(0,u)\in E\}$ and $F_0=\{u\in\omega:(0,u)\in F\}$.
Assume also that that at least one of the following conditions is
satisfied$:$
\begin{itemize}\itemsep=-3pt
\item[{\rm(a)}]$F_0$ and $E_0$ are both finite dimensional$;$
\item[{\rm(b)}]$\omega/\overline{F_0}$ and $\omega/\overline{E_0}$ are both finite
dimensional$;$
\item[{\rm(c)}]each of the $4$ spaces $F_0$, $E_0$,
$\omega/\overline{F_0}$ and $\omega/\overline{F_0}$ is infinite
dimensional.
\end{itemize}
Then there is $T\in GL(X\times\omega)$ such that $T(E)=F$.
\end{lemma}

\begin{proof} Since $X\in{\cal F}_1$, there is a continuous seminorm
$p$ on $X\times\omega$ such that $\ker p=\{0\}\times\omega$. If
$F_0$ and $E_0$ are both finite dimensional, we can find a
continuous norm $q$ on $X\times\omega$ such that $\ker q\subseteq
\{0\}\times\omega$ and $\ker q\cap \{0\}\times (E_0+F_0)=\{0\}$. It
follows that $\ker q\cap E=\{0\}$ and $\ker q\cap F=\{0\}$. Thus
$E,F\in{\cal M}_1$. By Corollary~\ref{GR2}, there is an isomorphism
$S:E\to F$. Then the unique continuous extension of $S$ to $T\in
L(X\times\omega)$ is an isomorphism and $T(E)=F$.

If (b) is satisfied then $Y=\overline{F_0}\cap \overline{E_0}$ is a
closed linear subspace of $\omega$ of finite codimension. Then
$\omega=Y\oplus L$, where $L$ is a finite dimensional subspace of
$\omega$. Thus $X\times\omega$ can be naturally identified with
$X_1\times Y$, where $X_1=X\times L$. Since $X\in{\cal F}_1$ and $L$
is finite dimensional, we have $X_1\in {\cal F}_1$. Furthermore, $Y$
is isomorphic to $\omega$ and $\{u\in Y:(0,u)\in E\}$ and $\{u\in
Y:(0,u)\in F\}$ are both dense in $Y$. Thus, without loss of
generality, we may assume that both $E_0$ and $F_0$ are dense in
$\omega$. Then by Theorem~\ref{omeg}, there is $S\in GL(\omega)$
such that $S(E_0)=F_0$. If (c) is satisfied, then by
Lemma~\ref{icod}, there is $S\in GL(\omega)$ such that $S(E_0)=F_0$.

By Lemma~\ref{lile} applied to a Hamel basis in $F$, there is a
dense in $X\times\omega$ $p$-independent subset of $F$. By Zorn's
lemma, there is a dense in $X\times\omega$ maximal $p$-independent
subset $A$ of $F$. Then $F$ is an algebraic direct sum of
$\spann(A)$ and $\{0\}\times F_0$:
$$
F=\spann(A)\oplus\{0\}\times F_0.
$$
Since $I_X\oplus S$ is an isomorphism of $X\times\omega$ onto
itself, $G=(I_X\oplus S)(E)$ is a dense countably dimensional
subspace of $X$. Furthermore, $G_0=\{u\in\omega:(0,u)\in G\}$
coincides with $S(E_0)=F_0$.

By Lemma~\ref{lile}, there is a dense in $X\times\omega$
$p$-independent subset of $G$. By Zorn's lemma, there is a dense in
$X\times\omega$ maximal $p$-independent subset $B$ of $G$. Then $G$
is an algebraic direct sum of $\spann(B)$ and $\{0\}\times
G_0=\{0\}\times F_0$:
$$
G=\spann(B)\oplus\{0\}\times F_0.
$$
Since both $A$ and $B$ are dense in $X\times\omega$ and
$p$-independent, Theorem~\ref{GR} furnishes us with $R\in
GL(X\times\omega)$ such that $R(B)=A$ and $Rx=x$ for every $x\in
\ker p=\{0\}\times\omega$. Thus using the last two displays, we
conclude that $R(G)=F$. Hence $T(E)=F$, where $T\in
GL(X\times\omega)$ is given by $T=R\circ (I_X\oplus S)$.
\end{proof}

\subsection{Action of $GL(X)$ on subspaces $E\in{\cal M}_0$ for $X\in{\cal
F}_3$}

\begin{lemma}\label{sram0} Let $X\in{\cal F}_2\cup{\cal F}_3$. Then $X$ has
subspaces isomorphic to $\omega$.
\end{lemma}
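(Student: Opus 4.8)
The plan is to treat the two classes separately, since their geometry is quite different. For $X\in{\cal F}_2$ there is essentially nothing to do: by Remark~\ref{MJ1}, $X$ is isomorphic to $Y\times\omega$ with $Y$ infinite dimensional, and the closed subspace $\{0\}\times\omega$ is then a subspace of $X$ isomorphic to $\omega$. Thus the whole content of the lemma lies in the case $X\in{\cal F}_3$, where $X$ carries no such product decomposition and the copy of $\omega$ must be produced by hand.

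So fix $X\in{\cal F}_3$ and an increasing sequence $\{p_n\}$ of seminorms defining its topology. By definition $\ker p_n/\ker p_{n+1}$ is infinite dimensional for infinitely many $n$; passing to a subsequence $\{p_{n_k}\}$ (which still defines the topology) and using that $\ker p_{n_k}/\ker p_{n_{k+1}}$ surjects onto $\ker p_{n_k}/\ker p_{n_k+1}$, I may assume that $\ker p_k/\ker p_{k+1}$ is infinite dimensional for \emph{every} $k\in\N$. The aim is to manufacture a biorthogonal sequence $\{(z_k,g_k)\}_{k\in\N}$ with $z_k\in X$, $g_k\in X'$ and $g_i(z_j)=\delta_{i,j}$, in which $z_k$ sits deep in the filtration, namely $z_k\in\ker p_k$ while $p_{k+1}(z_k)\neq0$.

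I would construct $z_k$ and $g_k$ by induction. At step $k$ the space $\ker p_k\cap\bigcap_{j<k}\ker g_j$ has codimension at most $k-1$ in $\ker p_k$, so its image in the infinite dimensional quotient $\ker p_k/\ker p_{k+1}$ is still infinite dimensional and in particular nonzero; this yields $z_k\in\ker p_k\cap\bigcap_{j<k}\ker g_j$ with $p_{k+1}(z_k)\neq0$. Since $p_{k+1}(z_k)\neq0$, the Hahn--Banach theorem supplies $h\in X'$ with $h(z_k)=1$, and putting $g_k=h-\sum_{j<k}h(z_j)g_j$ restores $g_k(z_j)=0$ for $j<k$ while keeping $g_k(z_k)=1$; together with $g_j(z_k)=0$ for $j<k$ (guaranteed by the choice of $z_k$) this gives full biorthogonality. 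The room needed to keep choosing the $z_k$ is exactly what the infinite dimensionality of the quotients $\ker p_k/\ker p_{k+1}$ provides, and this is the one place where membership in ${\cal F}_3$ (rather than ${\cal F}_2$) is essential.

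Finally, set $Z=\spannn\{z_k:k\in\N\}$. The continuous functionals $g_k\bigr|_Z$ are biorthogonal to the $z_k$, so $z_1,\dots,z_n$ are linearly independent in $Z$ for every $n$ and hence $\dim Z=\infty$. On the other hand $z_k\in\ker p_k\subseteq\ker p_n$ for all $k\geq n$, so $\spannn\{z_k:k\geq n\}\subseteq Z\cap\ker p_n$, whence $Z\cap\ker p_n$ has codimension at most $n-1$ in $Z$; as $\{p_n\bigr|_Z\}$ defines the topology of $Z$, every defining seminorm of $Z$ has finite codimensional kernel and therefore the topology of $Z$ is weak. Being a closed subspace of a Fr\'echet space, $Z$ is itself an infinite dimensional Fr\'echet space with weak topology, so $Z$ is isomorphic to $\omega$ by the standard fact quoted in the introduction. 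The main obstacle is precisely the inductive selection in the ${\cal F}_3$ case: one must retain enough freedom to place each $z_k$ simultaneously in $\ker p_k$, off $\ker p_{k+1}$, and in the kernels of the previously built functionals, which is exactly what the infinite dimensional quotients guarantee.
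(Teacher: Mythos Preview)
Your proof is correct, but it takes a more elaborate route than the paper's.

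The paper handles both ${\cal F}_2$ and ${\cal F}_3$ in one stroke. In either class one can arrange an increasing defining sequence $\{p_n\}$ with $\ker p_n/\ker p_{n+1}\neq\{0\}$ for every $n$ (this only needs the quotients to be nonzero, not infinite dimensional). One then picks $x_n\in\ker p_n\setminus\ker p_{n+1}$, and checks directly that $c\mapsto\sum c_nx_n$ is an isomorphism from $\omega$ onto the closed linear span of the $x_n$: the series converges because $x_n\in\ker p_m$ for $n\geq m$, and injectivity (hence linear independence of the $x_n$) follows by looking at the smallest index with a nonzero coefficient. No functionals are involved, and no case split is needed.

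Your argument is sound, but the biorthogonal system $\{g_k\}$ is doing no real work. Linear independence of the $z_k$ already follows from $z_k\in\ker p_k\setminus\ker p_{k+1}$ (same smallest-index trick), and your weak-topology computation for $Z$ uses only the inclusions $z_k\in\ker p_k$. Once the $g_k$ are removed, your ${\cal F}_3$ argument collapses to the paper's: pick $z_k\in\ker p_k\setminus\ker p_{k+1}$ and observe that the closed span is an infinite dimensional Fr\'echet space with weak topology. The only residual difference is that you conclude via the ``weak topology $\Rightarrow$ isomorphic to $\omega$'' fact, whereas the paper writes down the isomorphism explicitly. Your claim that the infinite dimensionality of the quotients is ``the one place where membership in ${\cal F}_3$ is essential'' is therefore an artefact of the unnecessary biorthogonality constraint; the paper's uniform argument shows that nonzero quotients suffice, which is why ${\cal F}_2$ needs no separate treatment.
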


\begin{proof} By definition of the classes ${\cal F}_2$ and ${\cal
F}_3$, there is an increasing sequence $\{p_n\}_{n\in\N}$ of
seminorms defining the topology of $X$ such that $\ker p_n/\ker
p_{n+1}\neq\{0\}$ for every $n\in\N$. Hence, we can pick $x_n\in
\ker p_n\setminus\ker p_{n+1}$ for each $n\in\N$. Let $V$ be the
closed linear span of $x_n$ for $n\in\N$. It is easy to verify that
the series $\sum c_nx_n$ converges for every sequence $c=\{c_n\}$ of
complex numbers and that the map $c\mapsto \sum c_nx_n$ provides an
isomorphism between $\omega$ and $V$. Thus $X$ has subspaces
isomorphic to $\omega$.
\end{proof}

\begin{lemma}\label{sram} Let $X\in{\cal F}_3$. Then for every two
subspaces $Y,Z$ of $X$ isomorphic to $\omega$, there exists a
subspace $W$ of $X$ isomorphic to $\omega$ such that $Y\subset W$,
$Z\subset W$ and both $W/Y$ and $W/Z$ are infinite dimensional.
\end{lemma}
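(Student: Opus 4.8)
The plan is to enlarge $Y$ and $Z$ to a single $\omega$-subspace $V=\overline{Y+Z}$ containing both, and then to exploit the room guaranteed by membership in ${\cal F}_3$ to adjoin a copy of $\omega$ transverse to $V$. Throughout I would fix, using Remark~\ref{MJ}, an increasing sequence $\{p_n\}_{n\in\N}$ of seminorms defining the topology of $X$ with $\ker p_n/\ker p_{n+1}$ infinite dimensional for every $n$; write $K_n=\ker p_n$ and let $\pi_n:X\to X/K_n$ be the canonical map onto the normed space $X/K_n$. Note first that $Y$ and $Z$ are closed in $X$, being complete as isomorphic copies of $\omega$.

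First I would show that $V=\overline{Y+Z}$ is isomorphic to $\omega$. Since $Y,Z\in{\cal M}_0$, condition $(A_0)$ applied to the defining seminorms $p_n\bigr|_Y$ and $p_n\bigr|_Z$ gives, by telescoping the successive finite dimensional quotients, that $Y/(Y\cap K_n)$ and $Z/(Z\cap K_n)$ are finite dimensional for every $n$. Hence $\pi_n(Y+Z)=\pi_n(Y)+\pi_n(Z)$ is finite dimensional, so it is closed in $X/K_n$, and therefore $\pi_n(V)=\pi_n(\overline{Y+Z})\subseteq\overline{\pi_n(Y+Z)}=\pi_n(Y+Z)\subseteq\pi_n(V)$, i.e. $\pi_n(V)=\pi_n(Y+Z)$ is finite dimensional. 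Thus $V/(V\cap K_n)$ is finite dimensional for every $n$, so $V\in{\cal M}_0$; being closed, complete and infinite dimensional (it contains $Y$), $V\cong\omega$. In particular $V$ is complemented, so $X=V\oplus X_1$ with $X_1$ closed; let $P:X\to X_1$ be the continuous projection along $V$.

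Next I would produce the transverse copy of $\omega$ by passing to $X/V$. With the quotient seminorms $\bar p_n$ on $X/V$, for which $\ker\bar p_n=(K_n+V)/V$, a routine computation gives $\ker\bar p_n/\ker\bar p_{n+1}\cong K_n/\bigl(K_{n+1}+(V\cap K_n)\bigr)$. Since $V\cong\omega$, the image of $V\cap K_n$ in $K_n/K_{n+1}$ is finite dimensional, while $K_n/K_{n+1}$ is infinite dimensional; hence this quotient is infinite dimensional for every $n$, so $X/V\in{\cal M}_3$. As $X/V$ is a separable infinite dimensional Fr\'echet space, $X/V\in{\cal F}_3$, and Lemma~\ref{sram0} supplies a subspace of $X/V$ isomorphic to $\omega$. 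Transporting it through the isomorphism $X_1\cong X/V$ (the restriction of the quotient map to the complement $X_1$) yields $U\subseteq X_1$ with $U\cong\omega$; in particular $U\cap V=\{0\}$. Finally I would set $W=V+U=P^{-1}(U)$, which is closed since $U$ is closed in $X_1$ and $P$ is continuous, and $W=V\oplus U\cong\omega\times\omega\cong\omega$. Then $Y,Z\subseteq V\subseteq W$, and because $U\cap V=\{0\}$ forces $U\cap Y=U\cap Z=\{0\}$, the space $U$ embeds as an infinite dimensional subspace of both $W/Y$ and $W/Z$, which is exactly what is required.

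The hard part is the bookkeeping in the first two steps: verifying that forming $\overline{Y+Z}$ does not escape the class ${\cal M}_0$ and, dually, that passing to $X/V$ does not consume all the infinite dimensional layers provided by ${\cal F}_3$. Both reduce to the single observation that an $\omega$-subspace meets each layer $K_n/K_{n+1}$ in only finitely many dimensions, so the infinitely many dimensions guaranteed by the ${\cal F}_3$ hypothesis remain available for a transverse summand. Once this is established, the assembly of $W$ in the last step is purely formal.
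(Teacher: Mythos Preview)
Your proof is correct and follows essentially the same route as the paper: form $V=\overline{Y+Z}\cong\omega$, complement it, find a copy of $\omega$ in the complement via Lemma~\ref{sram0}, and take the direct sum. The only cosmetic difference is that the paper dispatches $V\cong\omega$ in one line by noting that $Y+Z$ carries weak topology (hence so does its closure), and asserts the complement $M\in{\cal F}_3$ without the explicit quotient-seminorm computation you supply.
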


\begin{proof}
Let now $Y,Z$ be two subspaces of $X$ isomorphic to $\omega$. Then
$V=\overline{Y+Z}$ carries weak topology since its dense subspace
$Y+Z$ carries weak topology. Hence $V$ is isomorphic to $\omega$.
Since every isomorphic to $\omega$ subspace of a Fr\'echet space is
complemented, $X=V\oplus M$, where $M$ is a closed linear subspace
of $X$. It is straightforward to see that $M\in{\cal F}_3$ and
therefore by Lemma~\ref{sram0}, there is a closed linear subspace
$N$ of $M$ isomorphic to $\omega$. Now $W=V\oplus N$ has all desired
properties.
\end{proof}

\begin{lemma}\label{tram} Let $X\in {\cal F}_3$. Then $GL(X)$ acts
transitively on subspaces of $X$ isomorphic to $\omega$. Furthermore
$GL(X)$ acts transitively on countably dimensional subspaces of $E$
of $X$ such that $E\in {\cal M}_0$.
\end{lemma}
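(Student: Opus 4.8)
The plan is to prove the first assertion and then bootstrap the second from it. For the first, take two subspaces $Y,Z$ of $X$ isomorphic to $\omega$. By Lemma~\ref{sram} there is a subspace $W$ of $X$ isomorphic to $\omega$ with $Y,Z\subseteq W$ and both $W/Y$ and $W/Z$ infinite dimensional. Since $Y$ and $Z$ are isomorphic to $\omega$, they are complete, hence closed in $X$, and being subspaces of a Fr\'echet space isomorphic to $\omega$ they are complemented. Thus inside $W$ we may write $W=Y\oplus Y'=Z\oplus Z'$ as topological direct sums, with $Y',Z'$ closed. As $Y'$ is isomorphic to $W/Y$ and $Z'$ to $W/Z$, both are infinite dimensional closed subspaces of $W$, and a closed infinite dimensional subspace of a space carrying weak topology again carries weak topology; hence all four spaces $Y,Z,Y',Z'$ are isomorphic to $\omega$. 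Choosing isomorphisms $\alpha:Y\to Z$ and $\beta:Y'\to Z'$, the map $\phi=\alpha\circ\pi_Y+\beta\circ\pi_{Y'}$, where $\pi_Y,\pi_{Y'}$ are the continuous projections associated with $W=Y\oplus Y'$, lies in $GL(W)$ and satisfies $\phi(Y)=Z$. Finally, $W$ is itself complemented in $X$, say $X=W\oplus M$, and extending $\phi$ by the identity on $M$ yields $T=\phi\oplus I_M\in GL(X)$ with $T(Y)=Z$.

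For the second assertion, let $E,F$ be countably dimensional subspaces of $X$ with $E,F\in{\cal M}_0$. Since the subspace topologies of $E$ and $F$ are weak and both spaces are infinite dimensional and metrizable, their completions are isomorphic to $\omega$; as $X$ is complete, these completions are precisely the closures $\overline{E}$ and $\overline{F}$, so both are isomorphic to $\omega$. By the first assertion there is $S\in GL(X)$ with $S(\overline{E})=\overline{F}$. Then $S(E)$ is a dense countably dimensional subspace of $\overline{F}$, and so is $F$; that is, $S(E),F\in{\cal E}(\overline{F})$. By Theorem~\ref{omeg} applied to $\overline{F}$ (which is isomorphic to $\omega$) there is $R_0\in GL(\overline{F})$ with $R_0(S(E))=F$. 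Since $\overline{F}$ is complemented in $X$, say $X=\overline{F}\oplus N$, we extend $R_0$ by the identity on $N$ to get $R=R_0\oplus I_N\in GL(X)$. Then $T=R\circ S\in GL(X)$ satisfies $T(E)=R(S(E))=F$, as desired.

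The routine part is the density and complementation bookkeeping; the genuine content is delegated to the earlier results. The key structural input for the first assertion is Lemma~\ref{sram}, which produces a common overspace $W$ isomorphic to $\omega$ with infinite dimensional quotients, so that the complements $Y'$ and $Z'$ are again copies of $\omega$ and can be matched against one another; without the infinite dimensionality of $W/Y$ and $W/Z$ one could not freely assemble the automorphism $\phi$, and this is the step I expect to require the most care. For the second assertion the essential upgrade is the transitivity of $GL(\omega)$ on dense countably dimensional subspaces (Theorem~\ref{omeg}), which converts the mere coincidence of closures into the coincidence of the subspaces themselves.
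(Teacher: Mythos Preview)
Your argument is correct. The ingredients are the same as the paper's---Lemma~\ref{sram} for the common overspace $W$, complementation of $\omega$-copies in Fr\'echet spaces, and Theorem~\ref{omeg} for transitivity on dense countably dimensional subspaces---but the organization differs. The paper treats both assertions at once: given dense countably dimensional $E\subset Y$ and $F\subset Z$, it invokes Lemma~\ref{icod} (which already packages complementation together with Theorem~\ref{omeg}) to produce $S\in GL(W)$ with $S(E)=F$, and then extends by the identity on a complement of $W$; the first assertion then follows from the second by taking closures. You instead prove the first assertion by a bare-hands complement-matching argument (no Theorem~\ref{omeg} needed at that stage), and then bootstrap the second from the first plus Theorem~\ref{omeg}. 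Your route is slightly longer but makes the role of each ingredient more visible; the paper's route is more economical because Lemma~\ref{icod} has already done the assembly work.
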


\begin{proof} Let $Y$ and $Z$  be two subspaces of $X$ isomorphic to
$\omega$ and $E$ and $F$ be dense countably dimensional subspaces of
$Y$ and $Z$ respectively. In order to prove the lemma, it suffices
to find $T\in GL(X)$ such that $T(E)=F$ (the equality $T(Y)=Z$
follows).

By Lemma~\ref{sram}, there is a subspace $W$ of $X$ isomorphic to
$\omega$ such that $Y\subset W$, $Z\subset W$ and both $W/Y$ and
$W/Z$ are infinite dimensional. Since every isomorphic to $\omega$
subspace of a Fr\'echet space is complemented, $X=W\oplus V$, where
$V$ is a closed linear subspace of $X$. By Lemma~\ref{icod}, there
is $S\in GL(W)$ such that $S(E)=F$. Clearly, $T=S\oplus {\rm
Id}_V\in GL(X)$ and $T(E)=F$.
\end{proof}

\subsection{Proof of Theorem~\ref{grgr11}}

Obviously subspaces of a topological space with weak topology also
carry weak topology. Furthermore the topology of $X$ is weak if $X$
possesses a dense subspace with weak topology. These observation
together with Remark~\ref{MJ} and Theorem~\ref{omeg} immediately
imply (\ref{grgr11}.1). For the sake of brevity we denote ${\cal
E}={\cal E}(X)$.

By definition, $X\notin{\cal F}_0$ if and only if $X$ possesses a
non-trivial continuous seminorm. Clearly, ${\cal E}\subseteq{\cal
M}_1$ if and only if $X$ possesses a continuous norm, that is $X\in
{\cal F}_1$. Finally, by Corollary~\ref{GR2}, every $E,F\in{\cal
E}\cap{\cal M}_1$ are isomorphic, which proves (\ref{grgr11}.2).

By Lemma~\ref{sram0}, every $X\in {\cal F}_2\cup{\cal F}_3$ has a
closed linear subspace $Y$ isomorphic to $\omega$. Since every
isomorphic to $\omega$ subspace of a Fr\'echet space is
complemented, $X=Y\oplus Z$, where $Z$ is a closed linear subspace
of $X$. Then $Z$ possesses a non-trivial continuous seminorm $p$
(otherwise $X\in {\cal F}_0$). Then the seminorm $q(y+z)=p(z)$ for
$y\in Y$ and $z\in Z$ is a non-trivial continuous seminorm on $X$.
By Corollary~\ref{nnn1}, there is a dense countably dimensional
subspace $E_1$ of $X$ such that $E_1\cap \ker q=\{0\}$. Let also
$E_2$ be a dense countably dimensional subspace of $Y$ and
$E=E_1+E_2$. Obviously, $E\in{\cal E}$. Since $E_2\subset Y$,
$E_1\cap \ker q=\{0\}$ and $Y\subseteq \ker q$, $E=E_1\oplus E_2$
(in algebraic sense) and $E_2=E\cap Y$. In particular, $E_2$ is
closed in $E$ and carries weak topology. Furthermore, $q$ provides
(in a natural way) a continuous norm on $E/E_2$. By Remark~\ref{MJ},
$E\in {\cal M}_2$. Thus ${\cal E}\cap{\cal M}_2\neq\varnothing$. On
the other hand, from (\ref{grgr11}.1) and (\ref{grgr11}.2) it
follows that ${\cal E}\cap{\cal M}_2=\varnothing$ if $X\in {\cal
F}_0\cup{\cal F}_1$. Thus ${\cal E}\cap{\cal M}_2\neq\varnothing$ if
and only if $X\in {\cal F}_2\cup{\cal F}_3$.

Now let $X\in{\cal F}_2$. Then  $X=Y\oplus Z$, where $Z\in{\cal
F}_1$ and $Y$ is isomorphic to $\omega$. For $E\in{\cal E}$, we
denote $E_0=E\cap Y$ and $E_1=\overline{E_0}$. Clearly ${\cal
E}\cap{\cal M}_2$  is the union of two disjoint subsets ${\cal E}_1$
and ${\cal E}_2$, where ${\cal E}_1=\{E\in {\cal E}\cap{\cal
M}_2:\dim Y/E_1<\infty\}$ and ${\cal E}_2=\{E\in {\cal E}\cap{\cal
M}_2:\dim Y/E_1=\infty\}$. By Lemma~\ref{isia}, every two spaces in
${\cal E}_1$ are isomorphic and every two spaces in ${\cal E}_2$ are
isomorphic. Let now $E\in {\cal E}_1$ and $F\in {\cal E}_2$. In
order to show that ${\cal E}\cap{\cal M}_2$ consists of exactly two
isomorphism classes, it remains to verify that $E$ and $F$ are
non-isomorphic. Assume the contrary. That is, there is an
isomorphism $T\in L(X)$ such that $T(E)=F$. Since $Z$ possesses a
continuous norm and $Y/E_1$ is finite dimensional, $X/E_1$ also
possesses a continuous norm. Since $T$ is an isomorphism, $X/T(E_1)$
possesses a continuous norm. Since $E_0$ is a subspace of $E$
carrying weak topology, $T(E_0)$ is a subspace of $F$ carrying weak
topology. It is straightforward to verify that every subspace of $F$
carrying weak topology is contained in $F_0+L$, where $L$ is a
finite dimensional subspace of $F$. Thus $T(E_0)\subseteq F_0+L$.
Then $T(E_1)=\overline{T(E_0)}\subseteq \overline{F_0}+L=F_1+L$.
Since $X/T(E_1)$ possesses a continuous norm, $X/F_1+L$ also
possesses a continuous norm. Since $L$ is finite dimensional,
$X/F_1$ possesses a continuous norm. The latter is impossible since
$Y/F_1$ is an infinite dimensional space carrying weak topology.
This contradiction proves that ${\cal E}\cap {\cal M}_2$ contains
exactly two isomorphism classes if $X\in{\cal F}_2$.

Let $X\in{\cal F}_3$. We have to show that every $E,F\in{\cal E}\cap
{\cal M}_2$ are isomorphic. Indeed, let $E,F\in{\cal E}\cap {\cal
M}_2$. By definition of ${\cal M}_2$, there are closed linear
subspaces $E_1$ and $F_1$ of $E$ and $F$ respectively such that both
$E_1$ and $F_1$ carry weak topology and both $E/E_1$ and $F/F_1$ are
infinite dimensional and possess a continuous norm. By
Lemma~\ref{tram}, there is an isomorphism $T\in L(X)$ such that
$T(E_1)=F_1$. Then $F_1$ is a common closed subspace of $T(E)$ and
of $F$ and both $T(E)/F_1$ and $F/F_1$ possess a continuous norm.
Hence there is a continuous seminorm $p$ on $X$ such that
$F_1\subset\ker p$ and $T(E)\cap\ker p=F\cap\ker p=F_1$. Using
Lemma~\ref{lile}, we can find dense in $X$ $p$-independent sets $A$
and $B$ such that $\spann(B)\oplus F_1=T(E)$ and $\spann(A)\oplus
F_1=F$ (in algebraic sense). By Theorem~\ref{GR}, there is an
isomorphism $S\in L(X)$ such that $S(B)=A$ and $Sx=x$ for $x\in\ker
p$. Since $F_1\subset\ker p$, it follows that $S(F_1)=F_1$. Hence
$R(E_1)=F_1$, where $R=ST$. Next, the relation $\spann(B)\oplus
F_1=T(E)$ implies $\spann(T^{-1}(B))\oplus E_1=E$. Furthermore,
$R(T^{-1}(B))=S(B)=A$. It immediately follows that $R(E)=F$. Thus
$E$ and $F$ are isomorphic, which completes the proof of
(\ref{grgr11}.3).

It remains to verify (\ref{grgr11}.4). The fact that $X\in{\cal
F}_3$ if and only if ${\cal E}\cap{\cal M}_3\neq\varnothing$ is
obvious. We have to show that there are uncountably many pairwise
non-isomorphic spaces in ${\cal E}\cap {\cal M}_3$. Let $E_n\in
{\cal E}\cap {\cal M}_3$ for $n\in\N$. It suffices to find a $E\in
{\cal E}\cap {\cal M}_3$ non-isomorphic to each $E_n$. Since
$X\in{\cal F}_3$, we can choose an increasing sequence
$\{p_n\}_{n\in\N}$ of continuous seminorms on $X$ defining the
topology of $X$ such that $X_n/X_{n+1}$ is infinite dimensional for
every $n\in\Z_+$, where $X_0=X$ and $X_n=\ker p_n$ for $n\in\N$.
Since $E_k\in{\cal M}_3$, $\overline{E_k\cap X_n}\in{\cal F}_3$ for
every $n\in\Z_+$ and $k\in\N$. By Lemma~\ref{fufu}, there is a
sequence $r\in\Omega$ (see Section~\ref{FFFUUU}) such that
$\overline{E_k\cap X_n}\notin{\cal F}_{[r]}$ for every $n\in\Z_+$
and $k\in\N$. By Lemma~\ref{cuddly}, there is a closed subspace $Y$
of $X_1$ such that $Y\cap X_n$ is infinite dimensional for every
$n\in\N$ and $Y\in{\cal F}_{[r]}$. Now for each $m\in\N$ we can
choose a dense countably dimensional subspace $F_m$ of $Y\cap X_m$.
By Corollary~\ref{nnn1}, there is a dense in $X$ countably
dimensional subspace $F_0$ such that $F_0\cap X_1=\{0\}$. Let $E$ be
the linear span of the union of $F_m$ for $m\in\Z_+$. Clearly
$E\in{\cal E}$ and $E\cap X_n$ is infinite dimensional for every
$n\in\N$. It follows that $E\in {\cal M}_3$. It remains to show that
$E$ is non-isomorphic to each of $E_n$. Assume the contrary. That
is, there are $n\in\N$ and an isomorphism $T\in L(X)$ such that
$T(E_n)=E$. Continuity of $T$ implies that there are $c>0$ and
$k\in\N$ such that $p_1(Tx)\leq cp_k(x)$ for every $x\in X$. In
particular, $T(X_k)\subseteq X_1$. Hence $T(X_k\cap E_n)\subseteq
X_1\cap E$ and therefore $T(\overline{X_k\cap E_n})\subseteq
\overline{X_1\cap E}=Y$. That is, $\overline{X_k\cap E_n}$ is
isomorphic to a subspace of $Y$. Since $Y\in{\cal F}_{[r]}$, it
follows that $\overline{X_k\cap E_n}\in{\cal F}_{[r]}$. This
contradiction completes the proof of (\ref{grgr11}.4) and that of
Theorem~\ref{grgr11}.

\section{Orbitality for countably dimensional $E\in{\cal M}$}

\subsection{Strong orbitality for $E\in{\cal M}_0\cup{\cal M}_1$}

\begin{proposition}\label{boho} Every countably
dimensional $E\in {\cal M}_0$ is strongly orbital. In particular,
$E$ does not have the invariant subset property.
\end{proposition}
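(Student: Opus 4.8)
The plan is to mimic the proof of Proposition~\ref{prop1}, replacing the transitivity result of Grivaux on separable Banach spaces by Theorem~\ref{omeg}. First I would observe that, by the very definition of ${\cal M}_0$ together with the characterization of weak metrizable topologies recalled in Section~\ref{s1}, a countably dimensional $E\in{\cal M}_0$ is isomorphic to a dense countably dimensional linear subspace of $\omega$. Since strong orbitality is manifestly preserved under isomorphisms of topological vector spaces (if $x$ is hypercyclic for $T$ with $E=\spann(O(T,x))$ and $\phi\colon E\to\widetilde E$ is an isomorphism, then $\phi x$ is hypercyclic for $\phi T\phi^{-1}$ and $\widetilde E=\spann(O(\phi T\phi^{-1},\phi x))$), there is no loss of generality in assuming that $E$ is itself a dense countably dimensional subspace of $\omega$, that is, $E\in{\cal E}(\omega)$.

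Next I would produce a hypercyclic operator on the Fr\'echet space $\omega$. It is well known, and easily verified (for instance by Kitai's criterion applied to the backward shift $T(x_0,x_1,x_2,\dots)=(x_1,x_2,\dots)$: on the dense set of finitely supported sequences both the iterates $T^n$ and the forward-shift iterates converge to $0$ in $\omega$), that $\omega$ supports a hypercyclic operator $T\in L(\omega)$. Fix a hypercyclic vector $x$ for $T$ and put $Z=\spann(O(T,x))$. As $O(T,x)$ is dense in $\omega$, so is $Z$; and $Z$ is countably (hence infinite) dimensional, for otherwise it would be a finite dimensional, thus closed, subspace of the infinite dimensional space $\omega$ and could not be dense. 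Therefore $Z\in{\cal E}(\omega)$, and note $x=T^0x\in Z$.

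Now, exactly as in Proposition~\ref{prop1}, I would invoke Theorem~\ref{omeg} to obtain $S\in GL(\omega)$ with $S(Z)=E$. Since $Z$ is invariant for $T$, the subspace $E=S(Z)$ is invariant for $STS^{-1}$, so $A=STS^{-1}\bigr|_E$ belongs to $L(E)$, and $Sx\in S(Z)=E$. Then $O(A,Sx)=S(O(T,x))$ is dense in $\omega$ and contained in $E$, hence dense in $E$ for the inherited topology, while $\spann(O(A,Sx))=S(\spann(O(T,x)))=S(Z)=E$. Thus $Sx$ is a hypercyclic vector for $A$ and $E=\spann(O(A,Sx))$, so $E$ is strongly orbital. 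The final assertion that $E$ does not have the invariant subset property is then immediate from Lemma~\ref{orbi}.

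I do not expect a genuine obstacle here, since all the substantial work is already carried by Theorem~\ref{omeg} and by the template furnished by Proposition~\ref{prop1}. The only points demanding a little care are the verification that $Z$ is a bona fide member of ${\cal E}(\omega)$ (its density and countable dimensionality) and the passage of density of the orbit from $\omega$ to the subspace $E$, both of which are routine.
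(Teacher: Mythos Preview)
Your proof is correct and follows essentially the same approach as the paper. The only cosmetic difference is that the paper packages the transitivity step into Corollary~\ref{isisis} (every two countably dimensional spaces in ${\cal M}_0$ are isomorphic), whereas you invoke Theorem~\ref{omeg} directly in the style of Proposition~\ref{prop1}; since Corollary~\ref{isisis} is itself an immediate consequence of Theorem~\ref{omeg}, the two arguments are the same.
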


\begin{proof} It is well-known (see, for instance, \cite{bope})
that $\omega$ supports a hypercyclic operator. Thus, we can take
$S\in L(\omega)$ and $x\in \omega$ such that $x$ is a hypercyclic
vector for $S$. Now let $F=\spann\{S^nx:n\in\Z_+\}$. Then $F$ is a
dense countably dimensional subspace of $\omega$. Obviously,
$F\in{\cal M}_0$ and $F$ is strongly orbital. By
Corollary~\ref{isisis},  $E$ is isomorphic to $F$ and therefore $E$
is strongly orbital as well.
\end{proof}

The following result is an immediate corollary of Lemma~5.2 in
\cite{ss}.

\begin{lemma}\label{fac} Let $p$ be a non-trivial continuous
seminorm on a separable Fr\'echet space $X$. Then there exists a
hypercyclic $T\in L(X)$ such that $Tx=x$ for every $x\in \ker p$.
\end{lemma}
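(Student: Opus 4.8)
The plan is to produce $T$ in the perturbative form $T=I+N$, where $N\in L(X)$ vanishes on $\ker p$; this yields the required identity $Tx=x$ for $x\in\ker p$ for free and reduces the problem to arranging hypercyclicity. Since $p$ is non-trivial, $\ker p$ has infinite codimension, so $X/\ker p$ is infinite dimensional. First I would invoke Corollary~\ref{nnn1} to obtain a dense countably dimensional subspace $E$ of $X$ with $E\cap\ker p=\{0\}$, and then Lemma~\ref{bububu} to get a Hamel basis $\{u_n\}_{n\in\N}$ of $E$ together with functionals $\{f_n\}_{n\in\N}\subset X'_p$ satisfying $f_n(u_m)=\delta_{n,m}$. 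The key structural point is that $p^*(f_n)<\infty$ forces each $f_n$ to vanish on $\ker p$, so \emph{any} operator of the shape $Nx=\sum_n f_{a(n)}(x)\,g_n$ automatically vanishes on $\ker p$. Since $X$ is separable, I would also fix a sequence $\{z_m\}_{m\in\N}$ dense in the closed subspace $\ker p$; these will be the targets the orbit must sweep out \emph{inside} the fixed subspace.

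Next I would design $N$ so that on the transverse span $\spann\{u_n\}$ it acts as a weighted backward shift of hypercyclic type, while the ``overflow'' of that shift is deposited into $\ker p$ in such a way that one orbit can approximate every $z_m$. Concretely, $N$ reads off $u$-coordinates through the $f_n$ and writes into a single enumerated sequence of vectors built from the $u_n$ (to drive the transverse dynamics) interspersed with the $z_m$ (to fill the kernel). To make $N$ a well-defined continuous operator I would force its target vectors to tend to $0$ rapidly: by Lemma~\ref{l11} they span a Banach disk $D$, and on choosing the scalar weights small enough relative to the quantities $p_k^*(f_n)$ I would apply Lemma~\ref{nuc} to conclude that the series defining $N$ converges in every seminorm $p_k$, that $N\in L(X)$, and, if convenient, that $I+N$ is invertible. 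This step is essentially the bookkeeping already codified in the paper's Banach-disk and summability machinery, so I expect it to be routine.

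The substantive difficulty is hypercyclicity, and here the constraint $T\bigr|_{\ker p}={\rm Id}$ is exactly what defeats the naive Hypercyclicity Criterion: for $x\in\ker p\setminus\{0\}$ one has $T^nx=x\not\to0$, and moreover the coupling term that feeds the kernel leaves a non-vanishing $\ker p$-residue even on transverse vectors, so there is no dense set on which $T^{n_k}\to0$. I would therefore verify topological transitivity directly and appeal to the Birkhoff transitivity theorem (valid since $X$ is separable and completely metrizable): given basic open sets $U,V$ and an accuracy measured by some $p_k$, I would use the shift dynamics to move a transverse perturbation of a point of $U$ until its image in $X/\ker p$ lands near that of a point of $V$, and simultaneously exploit the recurrence of the shift (its iterates returning near $0$ in the quotient) together with the pumping mechanism to place the accumulated $\ker p$-component near a prescribed $z_m$. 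The main obstacle is to organize the enumeration and the weights so that density in the quotient and density inside the fixed subspace are achievable along a \emph{common} iterate; this is precisely the content of the general construction of Lemma~5.2 in \cite{ss}, of which the present statement is the special case $Y=\ker p$, and from which the lemma follows at once.
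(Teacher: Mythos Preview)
Your proposal is correct and ultimately coincides with the paper's approach: the paper does not give an independent proof but simply records that Lemma~\ref{fac} is an immediate corollary of Lemma~5.2 in \cite{ss}, which is exactly the citation you arrive at in your final sentence. The detailed sketch you provide of the perturbative construction $T=I+N$ and the transitivity argument is a reasonable outline of what that external lemma does, but the paper itself offers no such details.
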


\begin{proposition}\label{boho1} Every countably
dimensional $E\in {\cal M}_1$ is strongly orbital. In particular,
$E$ does not have the invariant subset property.
\end{proposition}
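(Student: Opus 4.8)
The plan is to mimic the proof of Proposition~\ref{boho}, but with the $\mathcal{M}_1$-classification of isomorphism types (Corollary~\ref{GR2}) playing the role of Corollary~\ref{isisis}: I would exhibit a strongly orbital space $Z\in\mathcal{M}_1$ having the same completion as $E$, and then transport strong orbitality along the resulting isomorphism. First I would pass to the completion $X=\overline{E}$. Since $E$ is countably dimensional and metrizable, $X$ is a separable infinite dimensional Fr\'echet space and $E$ is a dense countably dimensional subspace, so $E\in\mathcal{E}(X)$. Because $E\in\mathcal{M}_1$, we cannot have $X\in\mathcal{F}_0$: otherwise (\ref{grgr11}.1) would force $E\in\mathcal{M}_0$, incompatible with $E\in\mathcal{M}_1$. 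Hence $X$ carries a non-trivial continuous seminorm $p$.

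Next I would apply Lemma~\ref{fac} to obtain a hypercyclic $T\in L(X)$ with hypercyclic vector $x$ such that $Ty=y$ for every $y\in\ker p$; in particular $T(\ker p)\subseteq\ker p$. Set $Z=\spann(O(T,x))$. Then $Z$ is a dense countably dimensional subspace of $X$ and is strongly orbital: $T(Z)\subseteq Z$, the orbit $O(T|_Z,x)=O(T,x)$ is dense in $X$ and hence in $Z$, and $Z=\spann(O(T|_Z,x))$.

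The key point, which I expect to be the only real obstacle, is to check that $Z\in\mathcal{M}_1$, i.e. that the restriction $p|_Z$ is a norm, equivalently that $Z\cap\ker p=\{0\}$. Writing $Y=X/\ker p$ with the norm induced by $p$ and $\pi\colon X\to Y$ for the quotient map, the invariance $T(\ker p)\subseteq\ker p$ guarantees that $T$ induces a linear map $\bar T\colon Y\to Y$ with $\bar T\pi=\pi T$. Since $\pi$ is a continuous linear surjection and $O(T,x)$ is dense in $X$, the orbit $\{\bar T^n\bar x\}$, where $\bar x=\pi x$, is dense in $Y$. Because $p$ is non-trivial, $Y$ is infinite dimensional, and I would argue that $\{\bar T^n\bar x\}$ is therefore linearly independent: were $\bar T^{n_0}\bar x$ to lie in $V=\spann\{\bar T^n\bar x:n<n_0\}$ for the least such $n_0$, then $V$ would be a finite dimensional, hence closed, $\bar T$-invariant subspace containing the whole orbit, forcing $Y=\overline{\{\bar T^n\bar x:n\in\Z_+\}}\subseteq V$ to be finite dimensional, a contradiction. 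Linear independence of $\{\bar T^n\bar x\}$ is exactly the statement $Z\cap\ker p=\{0\}$, so $Z\in\mathcal{M}_1$. The whole verification hinges on having arranged, through Lemma~\ref{fac}, that $T$ leaves $\ker p$ invariant, so that descending to $X/\ker p$ and invoking its infinite dimensionality is legitimate.

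Finally, both $E$ and $Z$ are countably dimensional members of $\mathcal{M}_1$ with the same completion $X$, so Corollary~\ref{GR2} furnishes an isomorphism $\psi\colon Z\to E$. Strong orbitality is preserved by isomorphisms: conjugating $T|_Z$ by $\psi$ and pushing $x$ forward, the vector $\psi x$ is hypercyclic for $\psi\,(T|_Z)\,\psi^{-1}\in L(E)$ and spans $E$ under its orbit. Hence $E$ is strongly orbital, and the last assertion follows from Lemma~\ref{orbi}.
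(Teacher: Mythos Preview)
Your proof is correct and follows essentially the same approach as the paper: pass to the completion $X$, use Lemma~\ref{fac} to obtain a hypercyclic $T\in L(X)$ fixing $\ker p$, verify that $\spann(O(T,x))\in\mathcal{M}_1$ via the $p$-independence of the orbit, and conclude by Corollary~\ref{GR2}. The only cosmetic differences are that the paper obtains a non-trivial seminorm $p$ directly from $E\in\mathcal{M}_1$ (choosing $p$ with $E\cap\ker p=\{0\}$) rather than via (\ref{grgr11}.1), and carries out the $p$-independence argument in $X$ using $L+\ker p$ rather than passing to the quotient $X/\ker p$ as you do.
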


\begin{proof} Pick a dense Hamel basis $A$ in $E$.
Let $X$ be the completion of $E$. Then $X\in{\cal F}$. Since
$E\in{\cal M}_1$, there is a continuous seminorm $p$ on $X$ such
that $E\cap \ker p=\{0\}$. It follows that $A$ is $p$-independent.
By Lemma~\ref{fac}, there is a hypercyclic operator $R\in L(X)$ such
that $Rx=x$ for each $x\in\ker p$. Let $u\in X$ be a hypercyclic
vector for $R$. Let $F=\spann(O)$, where
$O=O(T,u)=\{T^nu:n\in\Z_+\}$. Observe that $O$ is $p$-independent.
Indeed, otherwise $R^n u\in L+\ker p$, where
$L=\spann\{u,Ru,\dots,R^{n-1}u\}$ for some $n\in\N$. Since $Rx=x$
for $x\in\ker p$, it follows that $O\subset L+\ker p$. Since $L$ is
finite dimensional and $\ker p$ is a closed linear subspace of $X$
of infinite codimension, $L+\ker p$ is a proper closed subspace of
$X$. Then the inclusion $O\subset L+\ker p$ contradicts density of
$O$ in $X$. Thus $O$ is $p$-independent and therefore $F\cap\ker
p=\{0\}$. Hence $F\in{\cal M}_1$. Obviously, $F$ is strongly
orbital. By Corollary~\ref{GR2}, $E$ and $F$ are isomorphic. Hence
$E$ is strongly orbital.
\end{proof}

\subsection{Strong orbitality for countably dimensional $E\in{\cal M}_3$}

\begin{lemma}\label{hyhoho} Let $X$ be a separable Fr\'echet space,
whose topology is defined by an increasing sequence
$\{p_n\}_{n\in\N}$ of seminorms. Let also $\{X_n\}_{n\in\Z_+}$ be a
sequence of closed linear subspaces of $X$ such that $X_0=X$,
$X_{n+1}\subseteq X_{n}\cap\ker p_{n+1}$ and $X_n/(X_n\cap \ker
p_{n+1})$ is infinite dimensional for every $n\in\Z_+$. Then there
are $T\in L(X)$ and $x\in X$ such that $O(T,x)\cap \ker p_n$ is a
dense subset of $X_n$ for every $n\in\Z_+$.
\end{lemma}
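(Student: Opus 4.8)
The plan is to realize the desired $T$ as a weighted backward shift on the biorthogonal system produced by Lemma~\ref{BuBuBu}, and to build the vector $x$ by hand so that its orbit is dense in $X=X_0$ while, for every $n$, those orbit points that happen to lie in $\ker p_n$ are dense in $X_n$. First I would apply Lemma~\ref{BuBuBu} (whose hypotheses are exactly those assumed here) to obtain $\{u_{n,k}\}\subset X$ and $\{f_{n,k}\}\subset X'$ satisfying (\ref{BuBuBu1}--\ref{BuBuBu4}). Recall that $\spannn\{u_{m,k}:m\geq n\}=X_n$, that each $f_{n,k}$ is continuous and vanishes on $\ker p_{n+1}$ (since $p_{n+1}^*(f_{n,k})<\infty$), and that $f_{n,k}(u_{m,j})=\delta_{n,m}\delta_{k,j}$. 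With the convention $\ker p_0=X$ (so that the case $n=0$ of the conclusion is simply hypercyclicity of $x$), the crucial reduction is the following observation: if $v=\sum_j b_je_j$ is any convergent series in the vectors $e_j:=u_{n(j),k(j)}$, for a fixed bijective enumeration $j\mapsto(n(j),k(j))$ of $\Z_+^2$, and $v\in\ker p_n$, then in fact $v\in X_n$. Indeed, for $m<n$ we have $v\in\ker p_n\subseteq\ker p_{m+1}$, so $f_{m,k}(v)=0$ for all $k$; by biorthogonality this says precisely that every coordinate $b_j$ of $v$ at a position of level $n(j)<n$ vanishes, whence the partial sums of $v$ lie in $\spann\{u_{m,k}:m\geq n\}\subseteq X_n$ and $v\in X_n$ because $X_n$ is closed. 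Since every orbit point of a backward shift built on $\{e_j\}$ is such a series, this reduces the lemma to producing $T$ and $x$ whose orbit is dense in $X$ and, for each $n$, whose orbit points in $\ker p_n$ are dense in $X_n$ (membership in $X_n$ then being automatic).

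Next I would define $T$ as a weighted backward shift
$$
Tx=\sum_{j\geq 1}w_j\,f_{n(j),k(j)}(x)\,e_{j-1},
$$
taking the enumeration to be a diagonal bijection of $\Z_+$ onto $\Z_+^2$ (so that repeated shifting drags high-level positions down through every level) and the weights $w_j>0$ small enough that $\sum_j\widehat p_k(w_jf_{n(j),k(j)},e_{j-1})<\infty$ for every $k$; by Lemma~\ref{nuc} this makes $T$ a well-defined continuous operator. One is free to rescale the $u_{n,k}$ beforehand (which rescales the $f_{n,k}$ reciprocally), so the summability needed for Lemma~\ref{nuc} can always be arranged. For a vector $x=\sum_i c_ie_{a_i}$ supported on a lacunary set $\{a_i\}$ of positions whose levels tend to infinity, the defining series converges in $X$ for \emph{arbitrary} coefficients $c_i$, because for each fixed $k$ only finitely many $e_{a_i}$ have $p_k(e_{a_i})\neq 0$. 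A direct computation gives $T^mx=\sum_{j:\,j+m\in\{a_i\}}c_{i(j+m)}W(j,m)\,e_j$, where $W(j,m)=w_{j+1}\cdots w_{j+m}>0$; in particular $T^mx\in\ker p_n$ if and only if $\supp x\cap(m+L_{<n})=\varnothing$, where $L_{<n}=\{j:n(j)<n\}$ denotes the set of low-level positions.

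Finally I would construct $x$ together with the times at which its orbit does its work by an inductive enumeration of \emph{tasks}, each task being a triple consisting of a level $n\geq 0$, a finite combination $v$ of $\{u_{m,k}:m\geq n\}$ drawn from a fixed countable dense subset of $X_n$, and a tolerance; the $n=0$ tasks force density in $X$, while the $n\geq 1$ tasks force density in $X_n$ of the part of the orbit lying in $\ker p_n$. At the step treating a task $(n,v,\varepsilon)$ I would choose a large time $m$ and insert into $\supp x$ the translates $\{q+m:q\in\supp v\}$ (these are high-level positions for $m$ large), picking the corresponding coefficients $c_i$ so that, in a suitably large seminorm, $T^mx$ matches $v$ on $\supp v$ up to $\varepsilon$, while arranging that the previously placed support points, shifted by $m$, fall at levels so high as to be negligible for that seminorm. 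The main obstacle is the simultaneous bookkeeping that keeps all of this consistent: one must ensure that (i) treating a new task does not spoil approximations already achieved, (ii) the exact membership $T^mx\in\ker p_n$ at each chosen time $m$ survives all \emph{future} insertions into $\supp x$, i.e. every later support point must avoid the fixed forbidden set $m+L_{<n}$, and (iii) the weights stay summable for Lemma~\ref{nuc}. Requirement (ii) is the delicate point, and it is exactly here that the diagonal enumeration is used: under it $L_{<n}$ has density zero, so at each step the finitely many active forbidden sets $m+L_{<n}$ together omit infinitely many positions, among them infinitely many of arbitrarily high level, at which new support points can be placed so as to realize the current target without ever retroactively destroying an earlier $\ker p_n$-membership.
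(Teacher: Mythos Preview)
Your opening observation---that any convergent series $\sum b_je_j$ lying in $\ker p_n$ automatically lies in $X_n$, because the functionals $f_{m,k}$ with $m<n$ kill it---is correct and exactly the right reduction. The construction of $T$, however, has a genuine flaw.

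A backward shift along a diagonal enumeration of $\Z_+^2$ is \emph{not} a continuous operator on $X$, and Lemma~\ref{nuc} does not rescue it. Under any diagonal bijection there are infinitely many $j$ with $n(j)$ arbitrarily large while $n(j-1)=0$ (e.g.\ at the start of each anti-diagonal). For such $j$ one has $p_{k'}(e_j)=0$ for every $k'\leq n(j)$, since $e_j=u_{n(j),k(j)}\in X_{n(j)}\subseteq\ker p_{n(j)}$; yet $p_1(Te_j)=w_jp_1(u_{0,k(j-1)})>0$ by the $p_1$-independence in (\ref{BuBuBu2}). So no estimate $p_1(Tx)\leq Cp_{k'}(x)$ can hold. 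Equivalently, in Lemma~\ref{nuc} the quantity $\widehat p_k(w_jf_{n(j),k(j)},e_{j-1})$ equals $\infty$ whenever $n(j)\geq k>n(j-1)$ (because $p_k^*(f_{n(j),k(j)})=\infty$: test on multiples of $u_{n(j),k(j)}\in\ker p_k$), and this happens for infinitely many $j$, so the required summability is impossible---rescaling $u_{n,k}$ cannot fix a product that is already $\infty\cdot(\text{positive})$.

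The paper avoids this by building $T$ as a \emph{level} shift rather than a \emph{position} shift: it defines $T=\sum_{n\geq1}T_n$ where each $T_n$ reads only the level-$n$ coordinates and deposits the answer at level $n-1$ (concretely $Tu_{n,k}=c_{n,k}u_{n-1,k}$ and $T\bigr|_{E_0}=0$). Continuity is then immediate because for fixed $k$ only the finitely many summands $T_1,\dots,T_k$ contribute to $p_k(Tx)$, and each is bounded through a Banach disk produced by Corollary~\ref{l22a}. With this $T$, the level of an orbit vector is deterministic---$T^m$ sends $E_j$ into $E_{j-m}$ or to $0$---so the delicate forbidden-set bookkeeping in your requirement~(ii) evaporates: one writes $x=\sum_l a_l^{[n_l]}$ as a sum of explicit lifts of targets $a_l$, and $T^{n_m}x-a_m$ lands exactly in $X_m$ by construction. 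Your density argument for $L_{<n}$ is clever, but it is compensating for a difficulty that the right choice of $T$ removes entirely.

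A secondary issue: even if $T$ were continuous, your requirement~(i) needs the same forbidden-set mechanism as~(ii), with the larger set $m+L_{<k}$ (for the seminorm $p_k$ governing the approximation) in place of $m+L_{<n}$. You flag~(ii) as the delicate point, but~(i) is not easier---future insertions can destroy an earlier approximation just as they can destroy an earlier $\ker p_n$-membership, and making weights small only makes the back-propagated coefficients \emph{larger}.
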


\begin{proof} By Lemma~\ref{BuBuBu}, there exist
$\{u_{n,k}:n,k\in\Z_+\}\subset X$ and $\{f_{n,k}:n,k\in\Z_+\}\subset
X'$ such that (\ref{BuBuBu1}-\ref{BuBuBu4}) are satisfied. By
Corollary~\ref{l22a}, there is a Banach disk $D$ in $X$ such that
$u_{n,k}\in D$ for every $n,k\in\Z_+$. For each $n\in\N$, consider
$T_n\in L(X)$ defined by the formula
$$
T_{n}x=\sum_{k=0}^\infty \frac{2^{-k}f_{n,k}(x)}
{p_{n+1}^*(f_{n,k})p_D(u_{n-1,k})}u_{n-1,k}.
$$
Since $|f_{n,k}(x)|\leq p_{n+1}(x)p_{n+1}^*(f_{n,k}(x))$, the series
in the above display converges absolutely in the Banach space $X_D$
and $p_D(T_nx)\leq 2p_{n+1}(x)$ for every $x\in X$. Hence $T_n:X\to
X_D$ is a continuous linear operator and therefore $T_n\in L(X)$.
Furthermore, (\ref{BuBuBu1}) ensures that
\begin{equation}
\text{$T_n(X)\subseteq X_{n-1}$ for every $n\in\N$.}\label{trat1}
\end{equation}
Since $X_n\subseteq\ker p_n$ for $n\in\N$, (\ref{trat1}) guarantees
that the series $\sum T_n$ converges pointwise. By the uniform
boundedness principle, the formula
$$
Tx=\sum_{n=1}^\infty T_nx=\sum_{n=1}^\infty\sum_{k=0}^\infty
\frac{2^{-k}f_{n,k}(x)} {p_{n+1}^*(f_{n,k})p_D(u_{n-1,k})}u_{n-1,k}
$$
defines a continuous linear operator $T$ on $X$. For $n\in\Z_+$, let
$E_n=\spann\{u_{n,k}:k\in\Z_+\}$ and $F_n=\spann\{u_{m,k}:m\geq n,\
k\in\Z_+\}$. Clearly $F_n$ is the sum of $E_m$ for $m\geq n$. By
(\ref{BuBuBu1}), $F_n$ is a dense countably dimensional subspace of
$X_n$. According to (\ref{BuBuBu4}),
\begin{equation}
\text{$T$ vanishes on $E_0$ and $S_n=T\bigr|_{E_n}:E_n\to E_{n-1}$
is an invertible linear operator for each $n\in\N$.}\label{brr}
\end{equation}

Now pick a countable set $A\subset F_0$ such that $A\cap F_k$ is
dense in $F_k$ for every $k\in\Z_+$. By (\ref{BuBuBu1}), each $F_k$
is dense in $X_k$ and therefore $A\cap F_k$ is dense in $X_k$. For
each $a\in A$, let $k(a)=\max\{m\in\Z_+:a\in F_m\}$. Then $a\in
F_{k(a)}$ and $A_m=\{a:k(a)=m\}$ is dense in $X_m$ for every
$m\in\Z_+$. By definition of $F_m$, for every $a\in A$, there is
$r(a)\geq k(a)$ such that $a\in
E_{k(a)}+E_{k(a)+1}+{\dots}+E_{r(a)}$. In other words,
$$
a=\sum_{j=k(a)}^{r(a)} w(j,a),\ \ \text{where}\ w(j,a)\in E_j.
$$
Now by (\ref{brr}),
\begin{equation}\label{brr1}
\text{for $u\in E_j$, $T^nu\in E_{j-n}$ if $n\leq j$ and $T^nu=0$ if
$n>j$.}
\end{equation}
For $u\in E_j$ and $n\in\Z_+$, we denote $u^{[n]}=u$ if $n=0$ and
$u^{[n]}=S_{j+n}^{-1}\dots S_{j+1}^{-1}u$ if $n>0$, where $S_l$ are
defined in (\ref{brr}). According to (\ref{brr}),
\begin{equation}\label{brr2}
\text{$u^{[n]}\in E_{j+n}$ and $T^mu^{[n]}=u^{[n-m]}$ if $m\leq n$.}
\end{equation}
For $a\in A$ and $n\in\Z_+$, we denote
$$
a^{[n]}=\sum_{j=k(a)}^{r(a)}u(j,a)^{[n]}.
$$
Now we enumerate $A$: $A=\{a_l:l\in\N\}$. Define the sequence
$\{n_j\}_{j\in\N}$ of non-negative integers by the formula $n_1=1$
and $n_{j+1}=n_j+r(a_{j+1})+j$ for $j\in\N$. Consider the series
$$
x=\sum_{l=1}^\infty a_l^{[n_l]}=\sum_{l=1}^\infty
\sum_{j=k(a_l)}^{r(a_l)} u(j,a_l)^{[n_l]}.
$$
First, we show that this series converges in $X$ and therefore $x$
is well-defined. Indeed, let $s\in\N$. By (\ref{brr2}),
$u(j,a_l)^{[n_l]}\in E_{j+n_l}\subset F_{j+n_l}\subseteq
F_{n_l}\subseteq F_l\subset X_l$. Hence $p_s(a_l^{[n_l]})=0$ for
$l\geq s$ and therefore the series in the above display converges
absolutely. Now let $m\in\N$, $l<m$ and $k(a)\leq j\leq r(a)$. Since
$n_m-n_l>r(l)$, (\ref{brr1}) implies that
$T^{n_m}(u(j,a_l)^{[n_l]})=0$. Hence  $T^{n_m}(a_l^{[n_l]})=0$.
Then, using (\ref{brr1}), we have
$T^{n_m}(a_l^{n_l})=a_l^{[n_l-n_m]}$ for $l>m$. Hence
$$
T^{n_m}x=a_m+\sum_{l=m+1}^\infty
a_l^{[n_l-n_m]}=a_m+\sum_{l=m+1}^\infty \sum_{j=k(a_l)}^{r(a_l)}
u(j,a_l)^{[n_l-n_m]}.
$$
Since for $l>m$, $n_l-n_m>r(a_l)+l>r(a_l)+m$, (\ref{brr2}) and the
above display imply that
$$
a_m-T^{n_m}x\in F_m\subseteq X_m.
$$
Hence $p_s(a_m-T^{n_m}x)=0$ for $m\geq s$ and therefore
$a_m-T^{n_m}x\to 0$ in $X$. Moreover, by (\ref{BuBuBu1}) and
(\ref{BuBuBu2}), for every $j\in\Z_+$, there is $l\in\Z_+$ such that
$T^jx\in X_l\setminus\ker p_{l+1}$. It follows that $O(T,x)\cap \ker
p_n\subseteq X_n$ for every $n\in\N$. Furthermore, $T^{n_m}x\in
X_{k(a_m)}\setminus \ker p_{k(a_m)+1}$ for each $m\in\N$. Since
$A_s=\{a\in A:k(a)=s\}$ is dense in $X_s$ for every $s\in\Z_+$, the
relation $a_m-T^{n_m}x\to 0$ ensures that $\{T^{n_m}x:k(a_m)=s\}$ is
a dense subset of $X_s$ for each $s\in\Z_+$. That is, $O(T,x)\cap
\ker p_{n}$ is a dense subset of $X_n$ for every $n\in\Z_+$.
\end{proof}

\begin{proposition}\label{nonom} Let $E\in{\cal M}_3$ be countably
dimensional. Then $E$ is strongly orbital and therefore does not
have the invariant subset property.
\end{proposition}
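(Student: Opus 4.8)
The plan is to realize $E$, up to isomorphism, as the linear span of a dense orbit inside its completion, and then appeal to the isomorphism machinery of Corollary~\ref{GRgrgr}. Let $X$ be the completion of $E$; since $E$ is countably dimensional it is separable, so $X$ is a separable Fr\'echet space. Because $E\in{\cal M}_3$, Remark~\ref{MJ} applied to $E$ lets me pick an increasing sequence $\{p_n\}_{n\in\N}$ of continuous seminorms whose restrictions define the topology of $E$ (and which, extended by continuity, define the topology of $X$) such that $(E\cap\ker p_n)/(E\cap\ker p_{n+1})$ is infinite dimensional for every $n\in\N$; discarding $p_1$ if necessary, I may in addition arrange that $E/(E\cap\ker p_1)$ is infinite dimensional. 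Put $X_0=X$ and $X_n=\overline{E\cap\ker p_n}$ for $n\in\N$. Then $\{X_n\}$ is a decreasing chain of closed subspaces with $X_{n+1}\subseteq X_n\cap\ker p_{n+1}$; and since $E\cap\ker p_n$ is dense in $X_n$ and surjects onto the infinite dimensional space $(E\cap\ker p_n)/(E\cap\ker p_{n+1})$, the quotient $X_n/(X_n\cap\ker p_{n+1})$ is infinite dimensional for every $n\in\Z_+$.

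Thus the hypotheses of Lemma~\ref{hyhoho} are met, and it furnishes $T\in L(X)$ and $x\in X$ with $O(T,x)\cap\ker p_n$ dense in $X_n$ for every $n\in\Z_+$. Taking $n=0$ gives that $O(T,x)$ is dense in $X$, so $x$ is a hypercyclic vector for $T$; and as $T(T^jx)=T^{j+1}x$, the subspace $F=\spann(O(T,x))$ is $T$-invariant. Hence the restriction $T|_F$ together with $x$ exhibits $F$ as a dense, countably dimensional, strongly orbital space.

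It remains to show $E\cong F$, for which by Corollary~\ref{GRgrgr} it suffices to verify $\overline{F\cap\ker p_n}=X_n$ for every $n$. The inclusion $X_n\subseteq\overline{F\cap\ker p_n}$ is immediate, since $O(T,x)\cap\ker p_n\subseteq F\cap\ker p_n$ is dense in $X_n$. The reverse inclusion is the one genuinely delicate point, and I would treat it through the biorthogonal system $\{u_{m,k}\}$, $\{f_{m,k}\}$ underlying Lemma~\ref{hyhoho} (produced by Lemma~\ref{BuBuBu}), which satisfies $f_{m,k}(u_{m',k'})=\delta_{m,m'}\delta_{k,k'}$ and $p_{m+1}^*(f_{m,k})<\infty$; the latter forces $f_{m,k}$ to annihilate $\ker p_{m+1}$, hence to annihilate $\ker p_n$ whenever $m<n$. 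Every orbit vector $T^jx$ expands level by level as a convergent series $\sum_{m,k}f_{m,k}(T^jx)u_{m,k}$ (its tails landing in the high subspaces $X_N\subseteq\ker p_N$), so the same expansion holds for each $y\in F$. For $y\in F\cap\ker p_n$ all coefficients with $m<n$ vanish, whence $y=\sum_{m\geq n,k}f_{m,k}(y)u_{m,k}\in X_n$. This yields $F\cap\ker p_n\subseteq X_n$ and completes the verification.

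With $\overline{E\cap\ker p_n}=\overline{F\cap\ker p_n}$ for all $n$, Corollary~\ref{GRgrgr} delivers $R\in GL(X)$ with $R(E)=F$; transporting the operator and hypercyclic vector back through $R^{-1}$ then shows that $E$ is strongly orbital, and Lemma~\ref{orbi} gives that $E$ does not have the invariant subset property. The crux of the argument is exactly the inclusion $F\cap\ker p_n\subseteq X_n$, which cannot be read off from the statement of Lemma~\ref{hyhoho} alone but requires the internal fact that the low-level coefficient functionals kill $\ker p_n$.
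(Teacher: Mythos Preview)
Your argument is correct and follows exactly the paper's route: pass to the completion $X$, choose seminorms with the $\mathcal{M}_3$ property, set $X_n=\overline{E\cap\ker p_n}$, invoke Lemma~\ref{hyhoho} to produce $T$ and $x$, and then apply Corollary~\ref{GRgrgr} to identify $E$ with $F=\spann(O(T,x))$. You are right that the equality $\overline{F\cap\ker p_n}=X_n$ is the only nontrivial point and that the paper simply asserts it; your justification of the inclusion $F\cap\ker p_n\subseteq X_n$ via the biorthogonal system from Lemma~\ref{BuBuBu}---using that the construction in the proof of Lemma~\ref{hyhoho} makes each $T^jx$ a convergent level-by-level sum $\sum_m\zeta_m$ with $\zeta_m\in E_m$, and that the functionals $f_{m,k}$ with $m<n$ annihilate $\ker p_n$---is exactly how that gap is filled.
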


\begin{proof} Let $X$ be the completion of $E$. Since $E\in{\cal
M}_3$, we can pick an increasing sequence $\{p_n\}_{n\in\N}$ of
continuous seminorms on $X$ defining the topology of $X$ such that
$E\cap \ker p_1$ has infinite codimension in $E$ and $E\cap \ker
p_{n+1}$ has infinite codimension in $E\cap \ker p_n$ for each
$n\in\N$. Let $X_0=X$ and $X_n=\overline{E\cap \ker p_n}$ for
$n\in\N$. It is easy to verify that $X_{n+1}\subseteq X_{n}\cap\ker
p_{n+1}$ and $X_n/(X_n\cap \ker p_{n+1})$ is infinite dimensional
for every $n\in\Z_+$. By Lemma~\ref{hyhoho}, there are $T\in L(X)$
and $x\in X$ such that $O(T,x)\cap \ker p_n$ is a dense subset of
$X_n$ for every $n\in\Z_+$. Clearly $F=\spann(O(T,x))$ is strongly
orbital. On the other hand, $X_n=\overline{E\cap \ker
p_n}=\overline{F\cap \ker p_n}$ for each $n\in\N$. By
Corollary~\ref{GRgrgr}, $E$ and $F$ are isomorphic. Hence $E$ is
also strongly orbital.
\end{proof}

\subsection{Invariant subset property for countably dimensional $E\in{\cal M}_2$}

\begin{lemma}\label{oper1} Let $X\in{\cal F}$ be the direct sum $X=Y\oplus Z$
of its closed linear subspaces such that $Y$ is isomorphic to
$\omega$ and $Z$ is non-isomorphic to $\omega$. Then there is a
dense countably dimensional subspace $E$ of $X$ such that $E$ does
not have the invariant subset property and $E\cap Y$ is dense in
$Y$.
\end{lemma}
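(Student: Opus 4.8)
The plan is to bypass strong orbitality with a two-generator argument. Strong orbitality is genuinely unavailable here: if $X\in{\cal F}_2$, then any dense countably dimensional $E$ with $E\cap Y$ dense in $Y$ contains the subspace $E\cap Y$, which is closed in $E$, infinite dimensional, and carries the weak topology; hence $E\notin{\cal M}_0\cup{\cal M}_1$, and since ${\cal E}(X)\cap{\cal M}_3=\varnothing$ for $X\notin{\cal F}_3$ (Theorem~\ref{grgr11}), necessarily $E\in{\cal M}_2$, so by Theorem~\ref{th1a} such an $E$ is not even orbital and Lemma~\ref{orbi} cannot be applied. Instead I would establish a \emph{double transitivity} strengthening of Lemma~\ref{orbi}: suppose I can produce $A\in L(X)$ and $x,x'\in X$ such that $\{(A^nx,A^nx'):n\in\Z_+\}$ is dense in $X\times X$, and set $E=\spann(O(A,x)\cup O(A,x'))$. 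A nonzero $v\in E$ has the form $v=p(A)x+q(A)x'$ with $(p,q)\neq(0,0)$, so $O(A,v)=\Phi(\{(A^nx,A^nx')\})$ for the continuous map $\Phi(u,u')=p(A)u+q(A)u'$. Since $A$ is hypercyclic (being a factor of the hypercyclic $A\oplus A$), Wengenroth's theorem \cite{ww} gives that $p(A)$ or $q(A)$ has dense range, so $\Phi$ has dense range; as its argument set is dense in $X\times X$, the orbit $O(A,v)$ is dense in $X$, hence in $E$. Thus $A\bigr|_E\in L(E)$ makes every nonzero vector hypercyclic, i.e. $E$ has no non-trivial closed invariant subset and does not have the invariant subset property.

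A preliminary remark shapes the construction: because the nonzero vectors of $E\cap Y$ must themselves be hypercyclic, $Y$ cannot be invariant for $A$, so $A$ must genuinely couple $Y$ and $Z$. For the existence of $A$ with $A\oplus A$ hypercyclic I would adapt the construction of Lemma~\ref{hyhoho}, which rests on Lemmas~\ref{BuBuBu} and \ref{nuc}: realize $A$ as a pointwise convergent sum of finite rank operators built from a biorthogonal system $\{u_{n,k}\}$, $\{f_{n,k}\}$, arranged so that along a sequence $n_j\to\infty$ the maps $A^{n_j}$ drive a fixed dense family toward $0$ while a partial right inverse does the same. This is the Hypercyclicity Criterion applied on $X\times X$ and yields density of $\{(A^nx,A^nx')\}$ in $X\times X$ for a suitable pair $(x,x')$; equivalently, $A$ is hereditarily hypercyclic.

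The delicate point is to arrange, for the \emph{same} $A,x,x'$, that $E\cap Y$ is dense in $Y$. Writing $\pi_Z:X\to Z$ for the projection along $Y$, a combination $p(A)x+q(A)x'$ lies in $Y$ exactly when $\pi_Z(p(A)x+q(A)x')=0$; since $Y$ is not $A$-invariant this is not governed by an induced operator on $Z$ and must be forced by hand. I would fix a sequence $\{y_m\}$ dense in $Y$ and, because $Y\cong\omega$ carries the weak topology (so only finitely many defining seminorms must be matched at each stage), interleave with the transitivity stages the construction of polynomials $p_m,q_m$ so that $w_m:=p_m(A)x+q_m(A)x'$ satisfies $\pi_Z w_m=0$ and $w_m\to y_m$ in $Y$; then $\{w_m\}\subseteq E\cap Y$ is dense in $Y$, as required.

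The hard part will be reconciling these $Z$-cancellations with double transitivity. The cheap way to land combinations in $Y$ is to synchronise the two orbits in the $Z$-direction (say $\pi_Zx=\pi_Zx'$), but that confines $\{(A^nx,A^nx')\}$ to the proper subspace $\{(u,u'):\pi_Zu=\pi_Zu'\}$ and destroys density in $X\times X$; so the cancellations producing the $w_m$ must be engineered from genuinely unsynchronised orbits. I expect this to be done using the freedom in the finite rank corrections (the vectors and functionals of the Lemma~\ref{nuc} scheme) to annihilate the finitely many $Z$-coordinates relevant at each stage without disturbing the already-secured transitivity estimates, all while keeping the operator continuous via the summability conditions of Lemma~\ref{nuc}. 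Maintaining this triple bookkeeping — the Hypercyclicity Criterion for $A\oplus A$, the exact $Z$-cancellations giving $w_m\in Y$, and continuity of the limit operator — throughout a single interleaved induction is where the main effort lies.
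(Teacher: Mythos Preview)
Your double-transitivity reduction is sound: if $(x,x')$ is hypercyclic for $A\oplus A$ and $v=p(A)x+q(A)x'\neq 0$, then $O(A,v)=\Phi(\{(A^nx,A^nx')\})$ for $\Phi(u,u')=p(A)u+q(A)u'$, and Wengenroth's theorem forces $\Phi$ to have dense range, so $O(A,v)$ is dense. Your diagnosis that strong orbitality is unavailable when $X\in{\cal F}_2$ is also correct. But the proposal is not a proof: the entire construction of $A,x,x'$ with $E\cap Y$ dense in $Y$ is left at the level of ``I expect this can be done by interleaving finite-rank corrections''. You correctly identify that the cheap way to land combinations in $Y$ (synchronising the $Z$-components) kills double transitivity, and that the honest way requires a triple bookkeeping whose details you do not supply. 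That is exactly the gap.

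The paper takes a genuinely different and simpler route that bypasses your hard step. It builds $T$ as a bilateral weighted backward shift on a biorthogonal system $\{x_n\}_{n\in\Z}$, $\{f_n\}_{n\in\Z}$ with $x_n\in Y$ for $n\geq 1$ and $x_n\in Z$ for $n\leq 0$; on $G=\spann\{x_n:n\in\Z\}$ the restriction of $T$ is a bijection with inverse $S$. A Hypercyclicity-Criterion argument produces a hypercyclic vector $w\in Y$ with $f_1(w)\neq 0$. One then sets $u_n=T^nw$ for $n\geq 0$ and, for $n<0$, $u_n=\sum_{j\geq 1}f_j(w)x_{j-n}$ (the formal preimages under $S$), so that $T^mu_n=u_{n+m}$ for all $n\in\Z$, $m\in\Z_+$, and takes $E=\spann\{u_n:n\in\Z\}$. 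The density of $E\cap Y$ is now automatic: $u_n\in Y$ for all $n\leq 0$, and a triangularity computation on the $f_j(u_n)$ shows $\spann\{u_n:n\leq 0\}$ is dense in $Y$. For the invariant-subset conclusion, any nonzero $v\in E$ satisfies $T^Nv=q(T)w$ for some $N\in\N$ and some nonzero polynomial $q$, whence by Bourdon $T^Nv$ (and hence $v$) is hypercyclic. So the paper needs only \emph{one} generator and a two-sided orbit; the backward half of the orbit sits in $Y$ by design, and there is no second vector, no $A\oplus A$, and no delicate interleaving to perform.
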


\begin{proof} Since $Z$ is non-isomorphic to $\omega$, there is a
non-trivial continuous seminorm $p$ on $X$ such that $Y\subseteq\ker
p$. Since $Y$ is isomorphic to $\K^\N$, we can pick
$\{x_n:n\in\N\}\subset Y$ and $\{f_n:n\in\N\}\subset X'$ such that
$f_n\bigr|_Z=0$  for each $n\in\N$, $f_n(x_m)=\delta_{n,m}$ for
every $m,n\in\N$ and the map $c=\{c_n\}\mapsto
\sum\limits_{k=1}^\infty c_nx_n$ is an isomorphism of $\K^\N$ and
$Y$.

Pick any dense countably dimensional subspace $F$ of $Z$. By
Corollary~\ref{l22a}, there exists a Banach disk $D$ in $Z$ such
that $F\subset Z_D=X_D$. By Lemma~\ref{bububu}, there exists a Hamel
basis $\{x_{-n}\}_{n\in\Z_+}$ in $F$ and a sequence
$\{f_{-n}\}_{n\in\N}$ in $X'_p$ such that
$f_{-n}(x_{-m})=\delta_{n,m}$ for every $m,n\in\Z_+$. Since
$f_{-n}\in X'_p$, $f_{-n}$ vanishes on $Y$ for each $n\in\Z_+$ and
therefore $f_{-n}(x_k)=0$ if $k\in\N$ and $n\in\Z_+$. Since $f_n$
vanishes on $Z$ for $n\in\N$, $f_{k}(x_{-n})=0$ if $k\in\N$ and
$n\in\Z_+$. Thus $f_n(x_m)=\delta_{n,m}$ for $m,n\in\Z$. Consider
$T\in L(X)$ defined by the formula
$$
Tx=\sum_{n=-\infty}^\infty c_nf_{n+1}(x)x_n,
$$
where $c_n=1$ for $n\geq 1$ and
$c_n=\frac{2^n}{p^*(f_{n+1})p_D(x_n)}$ for $n<0$. Obviously
$T\bigr|_Y$ is continuous and well defined. From the definition of
$c_n$ it follows that for $x\in Z$, the series in the above display
converges absolutely in the Banach space $X_D$ and therefore in $X$.
Thus $T$ is a well-defined continuous linear operator. Let
$G=\spann\{x_n:n\in\Z\}$. It is easy to see that $G$ is dense in $X$
and that $T\bigr|_G:G\to G$ is a bijective linear map. Thus we can
consider its inverse $S=(T\bigr|_G)^{-1}:G\to G$. It is
straightforward to verify that $S\bigr|_{Y\cap G}$  is the forward
shift:
$$
Sx=\sum_{n=1}^\infty f_n(x)x_{n+1}\ \ \text{for $x\in Y\cap G$}.
$$
The operator $S$ on the space $G$ is, of course, a bilateral
weighted (forward) shift with respect to the basis
$\{x_n\}_{n\in\Z}$.

First, we shall show that there is $w\in Y$ such that $f_1(w)\neq
0$, and $w$ is a hypercyclic vector for $T$. Indeed, let $w_0=x_1$
and choose a dense in $X$ countable subset $\{w_n:n\in\N\}$ of $G$.
We start with a few easy observations. First, from the definition of
$T$ it follows that $T^nw\in Z$ for all sufficiently large $n$ and
$p_D(T^nw)\to 0$ for every $w\in G$. Second, for each $w\in G$ and
$k\in\N$, $S^nw\in Y$ and $f_1(S^nw)={\dots}=f_k(S^nw)=0$ all for
sufficiently large $n$. Using this observations and the fact that
$T^nS^mw=T^{n-m}w$ and $T^mS^nw=S^{n-m}w$ for $w\in G$ and
$n,m\in\Z_+$, $n>m$, it is a standard exercise to see that if the
sequence $\{k_n\}_{n\in\Z_+}$ of non-negative integers with $k_0=0$
grows fast enough, then
\begin{itemize}\itemsep=-2pt
\item[(a)]
the sets $\{k\in\N:f_k(S^{k_n}w_n)\neq 0\}$ for $n\in\Z_+$ are
pairwise disjoint and therefore the series $\sum\limits_{n=0}^\infty
S^{k_n}w_n$ converges in $Y\subset X$ to $w\in Y$;
\item[(b)]
$T^{k_n}w-w_n\to 0$ in $X$.
\end{itemize}

Thus $w$ constructed in this way is a hypercyclic vector for $T$.
Since $f_1(w_0)=f_1(S^{k_0}w_0)=1$, (a) implies that
$f_1(S^{k_n}w_n)=0$ for $n\in\N$ and therefore $f_1(w)=1$.

Now we set $u_n=T^nw$ for $n\in\Z_+$ and
$u_n=\sum\limits_{j=1}^\infty f_j(w)x_{j-n}$ for $n\in\Z$, $n<0$.
From the definition of $T$ it follows that
\begin{equation}\label{shish}
\text{$T^mu_n=u_{n+m}$ for every $n\in\Z$ and $m\in\Z_+$}.
\end{equation}
Now let $E=\spann\{u_n:n\in\Z\}$. Since $w$ is a hypercyclic vector
for $T$, $E$ is dense in $X$. Since $u_n\in Y$ for $n\leq 0$,
$f_j(u_{j-1})=1$ for $j\in\N$ and $f_j(u_k)=0$ for $1\leq j\leq k$,
we see that $\spann\{u_n:n\leq 0\}$ is a dense subspace of $Y$.
Hence $E\cap Y$ is dense in $Y$. According to (\ref{shish}),
$T(E)\subseteq E$. It remains to show that $E$ does not have the
invariant subset property. In order to show that, it suffices to
demonstrate that every $v\in E\setminus\{0\}$ is hypercyclic vector
for $T$. Let $v\in E\setminus\{0\}$. Using (\ref{shish}) and the
equality $u_0=w$, we see that there is a non-zero polynomials $q$
and $n\in\N$ such that $T^nv=q(T)w$. According to Bourdon
\cite{bourd}, $q(T)w$ is a hypercyclic vector for $T$. Hence $T^nv$
is hypercyclic for $T$ and therefore $v$  is hypercyclic for $T$.
\end{proof}

\begin{proposition}\label{titi} Let $E\in{\cal M}_2$ be countably
dimensional. Then $E$ does not have the invariant subset property.
\end{proposition}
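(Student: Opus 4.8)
The plan is to pass to the completion and reduce everything to Lemma~\ref{oper1} together with the isomorphism classification of Theorem~\ref{grgr11}. Let $X$ be the completion of $E$. Since $E\in{\cal M}_2$, we have $X\in{\cal F}_2$, and by the proof of (\ref{grgr11}.3) we may write $X=Y\oplus Z$, where $Y$ is isomorphic to $\omega$ and $Z\in{\cal F}_1$ is infinite dimensional and carries a continuous norm; in particular $Z$ is non-isomorphic to $\omega$. Recall from that same proof that, writing $E_1=\overline{E\cap Y}$ for $E\in{\cal E}(X)\cap{\cal M}_2$, the set ${\cal E}(X)\cap{\cal M}_2$ splits into exactly the two isomorphism classes ${\cal E}_1=\{E:\dim Y/E_1<\infty\}$ and ${\cal E}_2=\{E:\dim Y/E_1=\infty\}$. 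Since the invariant subset property is preserved by isomorphisms and $E$ is isomorphic to every member of its own class, it suffices to exhibit one space in each of ${\cal E}_1$ and ${\cal E}_2$ that fails to have the invariant subset property.

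For the class ${\cal E}_1$ I would apply Lemma~\ref{oper1} directly to $X=Y\oplus Z$. It produces a dense countably dimensional $F_1\subseteq X$ without the invariant subset property such that $F_1\cap Y$ is dense in $Y$. Then $\overline{F_1\cap Y}=Y$, so $\dim Y/\overline{F_1\cap Y}=0$; moreover $F_1\cap Y$ is a closed weak subspace of $F_1$ and $F_1/(F_1\cap Y)$ embeds into $X/Y\cong Z$, which carries a continuous norm, so $F_1\in{\cal M}_2$. Hence $F_1\in{\cal E}_1$, and every $E\in{\cal E}_1$, being isomorphic to $F_1$, fails the invariant subset property.

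For ${\cal E}_2$ the idea is to run the same construction on a finer splitting of $Y$. Write $Y=Y_1\oplus Y_2$ with $Y_1,Y_2$ each isomorphic to $\omega$ and put $Z'=Y_2\oplus Z$; then $Z'\in{\cal F}_2$ is non-isomorphic to $\omega$ and $X=Y_1\oplus Z'$. Carrying out the construction in the proof of Lemma~\ref{oper1} for this splitting, but with the non-trivial continuous seminorm $p$ chosen so that $\ker p=Y$ (take $p=\|\cdot\|\circ P$, where $P:X\to Z$ is the projection along $Y$ and $\|\cdot\|$ is a continuous norm on $Z$), yields a dense countably dimensional $F_2$ lacking the invariant subset property, with $F_2\subseteq G=\spann\{x_m:m\in\Z\}$, where $x_m\in Y_1$ for $m\geq1$ and $x_m\in Z'$ for $m\leq0$, and with $F_2\cap Y_1$ dense in $Y_1$. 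The associated functionals $f_{-m}$ lie in $X'_p$, hence vanish on $\ker p=Y\supseteq Y_2$; together with $f_{-m}(x_{-n})=\delta_{m,n}$ this forces $P$ to be injective on $\spann\{x_m:m\leq0\}$. Consequently $G\cap Y=\spann\{x_m:m\geq1\}\subseteq Y_1$, so $F_2\cap Y\subseteq Y_1$ and $\overline{F_2\cap Y}\subseteq Y_1$ has infinite codimension in $Y$. Since $F_2\cap Y_1$ is dense in $Y_1$ and $F_2/(F_2\cap Y)$ embeds into $X/Y\cong Z$, we also get $F_2\in{\cal M}_2$. Thus $F_2\in{\cal E}_2$, and every $E\in{\cal E}_2$ is isomorphic to $F_2$ and fails the invariant subset property. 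Combining the two cases completes the proof.

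The step I expect to be the main obstacle is the last one: guaranteeing that the ${\cal E}_2$ representative really lands in ${\cal E}_2$ rather than accidentally in ${\cal E}_1$. Lemma~\ref{oper1} as a black box only controls $F_2\cap Y_1$, so one must look inside its construction and exploit the specific choice $\ker p=Y$ to confine the weak part of $F_2$ to $Y_1$ via the injectivity of $P$ on $\spann\{x_m:m\leq0\}$.
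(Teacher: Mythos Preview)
Your overall strategy---produce one representative in each isomorphism class of ${\cal E}(X)\cap{\cal M}_2$ via Lemma~\ref{oper1} and then invoke the classification in Theorem~\ref{grgr11}---is reasonable, but the argument has two genuine gaps.

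\textbf{Gap 1: the completion need not lie in ${\cal F}_2$.} From $E\in{\cal M}_2$ you conclude $X\in{\cal F}_2$, but this is false in general: by (\ref{grgr11}.3), ${\cal E}(X)\cap{\cal M}_2\neq\varnothing$ precisely when $X\in{\cal F}_2\cup{\cal F}_3$, so the completion of a countably dimensional $E\in{\cal M}_2$ may well land in ${\cal F}_3$. In that case the two-class splitting ${\cal E}_1\cup{\cal E}_2$ you rely on is unavailable (though, conveniently, there is then only one class to hit). Your argument would need an additional case analysis, and in the ${\cal F}_3$ case you can no longer assume $Z\in{\cal F}_1$, which you use when checking $F_1\in{\cal M}_2$.

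\textbf{Gap 2: the inclusion $F_2\subseteq G$ fails.} In the proof of Lemma~\ref{oper1}, the space produced is $\spann\{u_n:n\in\Z\}$, where $u_0=w$ is a vector obtained as an \emph{infinite} series $\sum S^{k_n}w_n$ in the $\omega$-component; in particular $w\notin G=\spann\{x_m:m\in\Z\}$, and neither are the iterates $u_n=T^nw$. So your deduction ``$G\cap Y\subseteq Y_1$, hence $F_2\cap Y\subseteq Y_1$'' does not go through. (The desired conclusion $F_2\cap Y\subseteq Y_1$ can in fact be rescued by computing $f_{-k}(T^nw)$ and observing that the resulting matrix is upper-triangular with nonzero diagonal---but that is a different, longer argument than the one you wrote.)

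\textbf{How the paper proceeds instead.} The paper sidesteps both problems by never invoking Theorem~\ref{grgr11}. It chooses the decomposition $X=Y\oplus Z$ \emph{adapted to $E$}: take a non-trivial seminorm $p$ with $E\cap\ker p$ infinite dimensional and carrying weak topology, and set $Y=\overline{E\cap\ker p}$. Then $E\cap Y$ is automatically dense in $Y$, so $E$ is already in the ``good'' position. Lemma~\ref{oper1} supplies $F$ with $F\cap Y$ dense in $Y$, and the isomorphism $E\cong F$ is built directly from Theorem~\ref{omeg} (matching $E\cap Y$ with $F\cap Y$) together with Theorem~\ref{GR} (matching $p$-independent transversals). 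This works uniformly for $X\in{\cal F}_2\cup{\cal F}_3$ and requires no digging into the internals of Lemma~\ref{oper1}.
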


\begin{proof} Let $X$ be the completion of $E$. Since $E\in{\cal
M}_2$, there is a non-trivial continuous seminorm $p$ on $X$ such
that $\ker p\cap E$ is infinite dimensional and carries weak
topology. Let $Y$ be the closure in $X$ of $\ker p\cap E$. Then $Y$
is isomorphic to $\omega$ and therefore is complemented in $X$.
Hence there is a closed linear subspace $Z$ of $X$ such that
$X=Z\oplus Y$. Since $p$ is a non-trivial seminorm on $X$ vanishing
on $Y$, $Z$ is infinite dimensional. By Lemma~\ref{oper1}, there is
a dense countably dimensional subspace $F$ of $X$ such that $F\cap
Y$ is dense in $Y$ and $F$ does not have the invariant subset
property.

By Theorem~\ref{omeg}, there is $S\in GL(Y)$ such that $S(E\cap
Y)=F\cap Y$. Using Lemma~\ref{lile}, we can find dense in $X$
$p$-independent countable subsets $A$ and $B$ of $E$ and $F$
respectively such that $E=(E\cap Y)\oplus\spann(A)$ and $F=(F\cap
Y)\oplus\spann(B)$ (in the algebraic sense). By Theorem~\ref{GR},
there is $R\in GL(X)$ such that $R(A)=B$ and $Rx=x$ for $x\in \ker
p\supseteq Y$. Let $T=R\circ (S\oplus {\rm Id}_Z)$. Then the
properties of $A$, $B$, $R$ and $S$ listed above imply $T(E)=F$.
Thus $E$ and $F$ are isomorphic and therefore $E$ also does not have
the invariant subset property.
\end{proof}

\subsection{Non-orbitality for $X\in{\cal M}_2$}

\begin{proposition}\label{noorb}
Let $E\in{\cal M}_2$ be countably dimensional. Then $E$ is
non-orbital.
\end{proposition}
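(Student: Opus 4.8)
The plan is to argue by contradiction. Suppose $E\in{\cal M}_2$ is countably dimensional and orbital, say $E=\spann(O(T,x))$ for some $T\in L(E)$ and $x\in E$. First I would dispose of the degenerate case: if the vectors $T^nx$ were linearly dependent, then $\spann(O(T,x))$ would be finite dimensional, contradicting $\dim E=\infty$. Hence $e_n:=T^nx$ ($n\in\Z_+$) is a Hamel basis of $E$ on which $T$ acts as the unilateral shift $Te_n=e_{n+1}$. Equivalently, the map $\Phi:\K[t]\to E$, $p\mapsto p(T)x$, is a linear isomorphism conjugating multiplication by $t$ to $T$; I will freely transport statements between $E$ and $\K[t]$ through $\Phi$.

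Next I would invoke the description $(A_2)$ of ${\cal M}_2$. Choose an increasing sequence $\{p_n\}$ of seminorms defining the topology of $E$ with $E/\ker p_1$ infinite dimensional, $d_n:=\dim(\ker p_n/\ker p_{n+1})<\infty$ for every $n$, and $d_n\neq 0$ for infinitely many $n$. Put $Y=\ker p_1$; then $Y$ is a closed infinite dimensional subspace whose subspace topology is weak (each $p_n|_Y$ has the finite-codimensional kernel $\ker p_n$, so every continuous seminorm on $Y$ has a finite-codimensional kernel), while $p_1$ induces a continuous norm on the infinite dimensional quotient $N:=E/\ker p_1$, with quotient map $\pi:E\to N$. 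The key consequence of continuity is that $p_1\circ T$ is a continuous seminorm, so $p_1(Tv)\leq Cp_k(v)$ for some $k,C$; hence $T(\ker p_k)\subseteq\ker p_1=Y$. Writing $Y=\ker p_k\oplus F$ with $\dim F=\sum_{j<k}d_j<\infty$, we get $T(Y)\subseteq Y+L$ with $L=T(F)$ finite dimensional, so $\pi(TY)$ is finite dimensional. In $\K[t]$ this says that the relation subspace ${\rm Rel}=\Phi^{-1}(Y)$ satisfies $t\cdot{\rm Rel}\subseteq{\rm Rel}+\Phi^{-1}(L)$: it is a finite dimensional perturbation of a $t$-invariant subspace, a ``quasi-ideal'' of infinite codimension in $\K[t]$.

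A clean sanity check motivates the target: $Y$ cannot be $T$-invariant, for then ${\rm Rel}$ would be a genuine ideal of $\K[t]$ of infinite codimension, which is impossible. The plan to close the argument is to upgrade the quasi-ideal statement to exactly this impossibility. Concretely, the near-invariance yields a well-defined linear recursion for the orbit in the quotient: since $\pi(Tv)\in\pi(TY)=:P$ whenever $v\in Y$, the map $\bar e_n\mapsto \overline{\pi(Te_n)}$ descends to $N/P$, giving $\bar e_{n+1}\equiv\Theta\,\bar e_n\pmod P$ for a fixed finite dimensional $P$ and a fixed linear $\Theta$. Iterating, every $\bar e_n$ lies in $\Theta^n\bar e_0+\spann\{\Theta^\ell P:\ell\ge 0\}$, so the requirement that $\{\bar e_n\}$ \emph{span} the infinite dimensional $N$, confronted with the requirement that ${\rm Rel}$ reach every level ${\rm Rel}_n=\Phi^{-1}(\ker p_n)$ (needed because $\sum d_n=\infty$ forces the orbit to fill $Y$ to unbounded depth, using the refined inclusions $T(\ker p_{\kappa(n)})\subseteq\ker p_n$), should pin the $t$-invariant core of ${\rm Rel}$ down to a finite-codimensional ideal, making $N$ finite dimensional — the desired contradiction.

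The step I expect to be the main obstacle is exactly this last conversion, because it cannot be purely algebraic: infinite-codimensional quasi-ideals do exist in $\K[t]$, so one must genuinely feed in the topology. The essential point is to use \emph{both} that $Y$ carries the weak topology — so the finitely many ``escape directions'' by which $T$ leaves $Y$ must interact rigidly with the whole finite-jump filtration $\ker p_1\supseteq\ker p_2\supseteq\cdots$, whose intersection is $\{0\}$ — \emph{and} that $N$ carries a continuous norm, forcing $\{\bar e_n\}$ to fill out a truly infinite dimensional normed space rather than a finitely generated $t$-module. A further subtlety to watch is that the quotient topology on $N$ may be strictly finer than the norm $p_1$ induces (the $\bar p_n$ for $n>1$ need not be equivalent to $p_1$), so the induced operator governing the recursion is continuous for this finer topology but need not be norm-bounded; the delicate bookkeeping that turns the ``modulo finite dimensional'' recursion together with the weak-topology rigidity into finite-codimensionality of the invariant core is where the defining hypotheses of ${\cal M}_2$ (finite dimensional jumps, yet infinitely many of them) must be used in an essential way.
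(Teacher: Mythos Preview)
Your setup is exactly the paper's: you pass to $\K[t]$ via $\Phi$, identify $J:={\rm Rel}=\Phi^{-1}(Y)$, and establish that $tJ\subseteq J+L'$ for some finite dimensional $L'$ (equivalently, $(J+tJ)/J$ is finite dimensional). Your derivation of this near-invariance via $p_1(Tv)\leq Cp_k(v)$ and the finite-dimensionality of $\ker p_1/\ker p_k$ is a perfectly good alternative to the paper's more abstract observation that a continuous linear image of a weak-topology space inside a space with a continuous norm must be finite dimensional.

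The genuine gap is your belief that the final step ``cannot be purely algebraic'' because ``infinite-codimensional quasi-ideals do exist in $\K[t]$''. This is false, and it is exactly what the paper's key lemma (Lemma~\ref{idea}) disposes of: if $J\subseteq\K[t]$ is a linear subspace with $(J+tJ)/J$ finite dimensional, then $J$ is either finite dimensional or of finite codimension. The proof is a two-line degree argument. Write $tJ\subseteq J+L$ with $\dim L<\infty$, and pick $p\in J$ with $n:=\deg p>\deg u$ for every $u\in L$ (possible since $J$ is infinite dimensional). If $q\in J$ has degree $m\geq n$, then $tq-r\in J$ for some $r\in L$; since $\deg r<n\leq m$, we get $\deg(tq-r)=m+1$. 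Inductively $J$ contains a polynomial of every degree $\geq n$, so $\dim(\K[t]/J)\leq n$.

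Since your $J$ is infinite dimensional (because $Y=\ker p_1$ is) the lemma forces $J$ to have finite codimension in $\K[t]$, hence $Y=\ker p_1$ has finite codimension in $E$, contradicting the non-triviality of $p_1$. No further topological input is needed; the elaborate programme you outline in your last two paragraphs is unnecessary.
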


In order to prove the above proposition, we need the following
curious elementary lemma.

\begin{lemma}\label{idea} Let $J$ be a linear subspace of the space
$\K[z]$ of polynomials such that $(J+zJ)/J$ is finite dimensional.
Then either $J$ is finite dimensional or $J$ has finite codimension
in $\K[z]$.
\end{lemma}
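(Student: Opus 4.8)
The plan is to analyze the multiplication-by-$z$ action on the quotient space $\K[z]/J$. The hypothesis that $(J+zJ)/J$ is finite dimensional is the key structural input: it says that multiplication by $z$ carries $J$ almost into itself. First I would observe that the natural map $z\cdot : \K[z]/J \to \K[z]/J$ is well-defined precisely when $zJ \subseteq J$, which need not hold; the finite-dimensionality of $(J+zJ)/J$ is the measure of the failure. So instead I would look at the space $V = \K[z]/J$ together with the \emph{partially defined} shift and quantify how badly $J$ fails to be an ideal.

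The cleaner reformulation I would pursue is this: let $W = J + zJ + z^2 J + \cdots = \sum_{n\geq 0} z^n J$ be the smallest ideal of $\K[z]$ containing $J$. I claim $W/J$ is finite dimensional. Indeed, writing $J_k = J + zJ + \cdots + z^k J$, the hypothesis gives $\dim(J_1/J) < \infty$; then $J_{k+1}/J_k$ is a quotient of $J_k/J_{k-1}$ under multiplication by $z$ (which sends generators to generators), so $\dim(J_{k+1}/J_k) \leq \dim(J_k/J_{k-1})$, and one checks the chain $J_k$ stabilizes once $z \cdot (J_k) \subseteq J_k$. The main point is that $W$ \emph{is} an ideal of $\K[z]$, and $\K[z]$ is a principal ideal domain: so $W = (g)$ for some polynomial $g$ (with $W = \{0\}$ or $W = \K[z]$ as degenerate cases). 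Thus $\K[z]/W$ is finite dimensional iff $g \neq 0$, i.e. iff $W$ has finite codimension.

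From here I would run the dichotomy. Either $g = 0$, so $W = \{0\}$, forcing $J = \{0\}$, which is finite dimensional; or $g \neq 0$, in which case $W = (g)$ has finite codimension $\deg g$ in $\K[z]$. In the latter case, since $\dim(W/J) < \infty$ (shown above) and $\dim(\K[z]/W) = \deg g < \infty$, the codimension $\dim(\K[z]/J) = \dim(\K[z]/W) + \dim(W/J)$ is finite, so $J$ has finite codimension. This gives exactly the stated alternative: $J$ finite dimensional, or $J$ of finite codimension. The one remaining edge case is $W = \K[z]$ with $J \subsetneq W$ finite codimensional, which already falls under the second branch.

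\textbf{The main obstacle} I anticipate is justifying that the ascending chain $J_k = \sum_{i\leq k} z^i J$ stabilizes, and hence that $W/J$ is finite dimensional, purely from $\dim(J_1/J) < \infty$. The inequality $\dim(J_{k+1}/J_k) \leq \dim(J_k/J_{k-1})$ needs care, since multiplication by $z$ is injective on $\K[z]$ but the induced map on the quotients $J_k/J_{k-1} \to J_{k+1}/J_k$ could a priori fail to be surjective or could have unexpected kernel; I would verify that $z\cdot$ maps $J_k/J_{k-1}$ \emph{onto} $J_{k+1}/J_k$ (surjectivity is immediate from the definition $J_{k+1} = J_k + z J_k$) so the dimensions are non-increasing, whence the chain stabilizes at some finite stage $N$ with $z J_N \subseteq J_N$, giving $W = J_N$ and $\dim(W/J) = \sum_{k=1}^{N}\dim(J_k/J_{k-1}) < \infty$. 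Once that finiteness is in hand, the PID structure of $\K[z]$ does all the remaining work automatically.
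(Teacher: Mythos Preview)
Your argument has a genuine gap at exactly the point you flagged as the main obstacle. From the surjectivity of $z\cdot: J_k/J_{k-1}\to J_{k+1}/J_k$ you correctly deduce that the sequence $\dim(J_{k+1}/J_k)$ is non-increasing, but you then assert ``whence the chain stabilizes at some finite stage $N$ with $zJ_N\subseteq J_N$''. This does not follow: a non-increasing sequence of non-negative integers becomes \emph{eventually constant}, but that constant need not be zero, and only if it is zero does the chain $J_k$ stabilize. The simplest counterexample is $J=\K$ (the constants): then $(J+zJ)/J$ is one-dimensional, $J_k$ is the space of polynomials of degree at most $k$, $\dim(J_k/J_{k-1})=1$ for every $k\geq 1$, the chain never stabilizes, $W=\K[z]$, and $W/J$ is infinite dimensional. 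So the assertion ``$W/J$ is finite dimensional'' is simply false in general, and your dichotomy in the final paragraph breaks down: here $g$ is a unit, $\K[z]/W=\{0\}$, but $\dim(W/J)=\infty$, so you cannot conclude $\dim(\K[z]/J)<\infty$ from your displayed sum. Of course in this example $J$ is finite dimensional, so the lemma holds via the first alternative---but your argument never invokes infinite-dimensionality of $J$ to rule out this behaviour, so it cannot separate the two cases.

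The paper's proof is more elementary and uses the infinite-dimensionality of $J$ directly through the degree filtration. Write $zJ\subseteq J+L$ with $L=\spann\{u_1,\dots,u_k\}$ finite dimensional. Since $J$ is infinite dimensional, pick $p\in J$ with $\deg p>\max_j\deg u_j$. Then whenever $q\in J$ has $\deg q=m\geq\deg p$, the inclusion $zq\in J+L$ gives $zq-r\in J$ for some $r\in L$; since $\deg r<\deg p\leq m$, we get $\deg(zq-r)=m+1$, so $J$ contains a polynomial of degree $m+1$. Inductively $J$ contains a polynomial of every degree $\geq\deg p$, hence $\dim(\K[z]/J)\leq\deg p$. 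Your ideal-theoretic reformulation via $W=\sum z^nJ$ is appealing, but to push it through you would still need some degree argument in the infinite-dimensional case to force the surjections $J_k/J_{k-1}\to J_{k+1}/J_k$ to eventually acquire a kernel---and at that point you are essentially redoing the paper's computation.
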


\begin{proof} Assume that $J$ is infinite dimensional. We have to
show that the codimension of $J$ in $\K[z]$ is finite. Since
$(J+zJ)/J$ is finite dimensional, there exist $u_1,\dots,u_k\in
\K[z]$ such that $zJ\subseteq J+L$, where
$L=\spann\{u_1,\dots,u_k\}$. Since $J$ is infinite dimensional,
there is $p\in J$ such that $n=\deg p>\deg u_j$ for every
$j\in\{1,\dots,k\}$. Next, if $m\in\N$, $m\geq n$ and there is $q\in
J$ satisfying $\deg q=m$, then the inclusion $zJ\subseteq J+L$
implies that there is $r\in L$ for which $zq-r\in J$. Since $\deg
r\leq \deg q$, $\deg (zq-r)=m+1$. Thus $J$ contains a polynomial of
degree $m+1$ whenever $m\geq n$ and $J$ contains a polynomial of
degree $m$. Since $J$ contains a polynomial of degree $n$, it
follows that $J$ contains a polynomial of degree $m$ for every
$m\geq n$. Hence $\dim(\K[z]/J)\leq n$.
\end{proof}

\begin{proof}[Proof of Proposition~\ref{noorb}] Since $E\in{\cal
M}_2$, there exists a non-trivial continuous seminorm $p$ on $E$
such that $F=\ker p$ carries weak topology. Let $T\in L(E)$ and
$x\in E$. We have to show that $\spann(O(T,x))\neq E$. Assume the
contrary. That is, $\spann(O(T,x))=E$. Since $E$ is infinite
dimensional, the vectors $T^nx$ are linearly independent. Hence the
map $\Phi:\K[z]\to E$, $\Phi(p)=p(T)x$ is a linear bijection.
Consider the map $S:F\to E/F$, $Sx=Tx+F$. Then $S$ is a continuous
linear operator from the space $F$ carrying weak topology to the
space $E/F$ possessing a continuous norm. Since the continuous
linear image of a space with weak topology carries weak topology and
only finite dimensional spaces carry weak topology and a continuous
norm simultaneously, $S(F)$ is finite dimensional. That is, there is
a finite dimensional subspace $L$ of $E$ such that $T(F)\subseteq
F+L$. This means that $(J+zJ)/J$ is finite dimensional, where
$J=\Phi^{-1}(F)=\{p\in\K[z]:p(T)x\in F\}$. Since $F$ is infinite
dimensional and $\Phi:\K[z]\to E$ is a linear bijection, $J$ is
infinite dimensional. By Lemma~\ref{idea}, $J$ has finite
codimension in $\K[z]$. Since $\Phi(J)=F$ and $\Phi:\K[z]\to E$ is a
linear bijection, $F=\ker p$ has finite codimension in $E$, which
contradicts the non-triviality of $p$.
\end{proof}

\subsection{Proof of Theorems~\ref{th1} and~\ref{th1a}}

Let $E\in{\cal M}$ be countably dimensional. If $E\notin{\cal M}_2$,
then $E$ is strongly orbital according to Propositions~\ref{boho},
\ref{boho1} and~\ref{nonom}. By Lemma~\ref{orbi}, $E$ does not have
the invariant subset property if $E\notin{\cal M}_2$. If $E\in{\cal
M}_2$, then $E$ does not have the invariant subset property
according to Proposition~\ref{titi}. Finally, By
Proposition~\ref{noorb}, $E$ is non-orbital if $E\in{\cal M}_2$.
This proves Theorems~\ref{th1} and~\ref{th1a}.

\section{Proof of Theorem~\ref{main}}

We shall construct an operator $T$ with no non-trivial invariant
subspaces by lifting a non-linear map on a topological space to a
linear map on an appropriate topological vector space.

\subsection{A class of complete countably dimensional spaces}

Recall that a topological space $X$ is called {\it completely
regular} (or {\it Tychonoff}) if for every $x\in X$ and a closed
subset $F\subset X$ satisfying $x\notin F$, there is a continuous
$f:X\to \R$ such that $f(x)=1$ and $f\bigr|_F=0$. Equivalently, a
topological space is completely regular, if its topology can be
defined by a family of pseudometrics. Note that any subspace of a
completely regular space is completely regular and that every
topological group is completely regular.

Our construction is based upon the concept of the {\it free locally
convex space} \cite{usp}. Let $X$ be a completely regular
topological space. We say that a topological vector space $L_X$ is a
free locally convex space of $X$ if $L_X$ is locally convex,
contains $X$ as a subset with the topology induced from $L_X$ to $X$
being the original topology of $X$ and for every continuous map $f$
from $X$ to a locally convex space $Y$ there is a unique continuous
linear operator $T:L_X\to Y$ such  that $T\bigr|_X=f$. It turns out
that for every completely regular topological space $X$, there is a
free locally convex space $L_X$ unique up to an isomorphism leaving
points of $X$ invariant. Thus we can speak of {\it the} free locally
convex space $L_X$ of $X$. Note that $X$ is always a Hamel basis in
$L_X$. That is, as a vector space, $L_X$ consists of formal finite
linear combinations of elements of $X$. Identifying $x\in X$ with
the point mass measure $\delta_x$ on $X$ ($\delta_x(A)=1$ if $x\in
A$ and $\delta_x(A)=0$ if $x\notin A$), we can also think of
elements of $L_X$ as measures with finite support on the
$\sigma$-algebra of all subsets of $X$. Under this interpretation
$$
L_X^0=\{\mu\in L_X:\mu(X)=0\}
$$
is a closed hyperplane in the locally convex space $L_X$. If $f:X\to
X$ is a continuous map, from the definition of the free locally
convex space it follows that $f$ extends uniquely to a continuous
linear operator $T_f\in L(L_X)$. It is also clear that $L_X^0$ is
invariant for $T_f$. Thus the restriction $S_f$ of $T_f$ to $L_X^0$
belongs to $L(L_X^0)$.

According to Uspenskii \cite{usp}, $L_X$ is complete if and only if
$X$ is Dieudonne complete \cite{eng} and every compact subset of $X$
is finite. Since Dieudonne completeness follows from paracompactness
\cite{eng}, every regular countable topological space is Dieudonne
complete. Since every countable compact topological space is
metrizable \cite{eng}, for a countable $X$, finiteness of compact
subsets is equivalent to the absence of non-trivial convergent
sequences (a convergent sequence is trivial if it is eventually
stabilizing). Note also that a regular countable topological space
is automatically completely regular \cite{eng} and therefore we can
safely replace the term {\it completely regular} by {\it regular} in
the context of countable spaces. Thus we can formulate the following
corollary of the Uspenskii theorem.

\begin{proposition}\label{prop3} Let $X$ be a regular countable
topological space. Then the countably dimensional locally convex
topological vector spaces $L_X$ and $L_X^0$ are complete if and only
if there are no non-trivial convergent sequences in $X$.
\end{proposition}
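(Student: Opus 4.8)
The plan is to read the statement off directly from the Uspenskii theorem quoted just above, together with the three topological facts recalled in the preceding paragraph. Since $X$ is regular and countable, it is completely regular, so the free locally convex space $L_X$ is defined and the Uspenskii criterion applies: $L_X$ is complete if and only if $X$ is Dieudonne complete and every compact subset of $X$ is finite. The first of these two conditions is automatic in our setting, because a countable space is Lindel\"of, a regular Lindel\"of space is paracompact, and paracompactness implies Dieudonne completeness. Thus completeness of $L_X$ reduces to the single requirement that every compact subset of $X$ be finite.

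Next I would verify that, for a countable regular $X$, finiteness of all compact subsets is equivalent to the absence of non-trivial convergent sequences. For one direction, a non-trivial convergent sequence together with its limit forms an infinite compact subset, so if all compact sets are finite there can be no such sequence. For the converse, let $K\subseteq X$ be compact; then $K$ is countable and compact, hence metrizable, so if $K$ were infinite it would contain a limit point and therefore a non-trivial convergent sequence of distinct points, contradicting the hypothesis. Hence $K$ is finite. Combining this equivalence with the previous paragraph yields the assertion for $L_X$.

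Finally, to transfer the conclusion to $L_X^0$, I would use that $L_X^0$ is a closed hyperplane in $L_X$, namely the kernel of the continuous linear functional $\mu\mapsto\mu(X)$. Choosing any $v\in L_X$ with $v(X)\neq 0$ gives a topological direct sum $L_X=L_X^0\oplus\K v$ whose one-dimensional complement is complete; consequently $L_X$ is complete precisely when $L_X^0$ is. Therefore $L_X$ and $L_X^0$ are simultaneously complete, and by the above this happens exactly when $X$ has no non-trivial convergent sequences.

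I expect no serious obstacle here: the proposition is essentially a packaging of the Uspenskii theorem for the countable regular case. The only genuine verification is the equivalence in the second paragraph, and it rests entirely on the already-quoted metrizability of countable compact spaces; the hyperplane argument for $L_X^0$ is routine.
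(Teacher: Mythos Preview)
Your proposal is correct and follows essentially the same route as the paper: the proposition is presented there as an immediate corollary of the Uspenskii theorem, with the preceding paragraph supplying exactly the three ingredients you use (countable regular $\Rightarrow$ Dieudonne complete via paracompactness; countable compact $\Rightarrow$ metrizable, giving the equivalence with ``no non-trivial convergent sequences''; and countable regular $\Rightarrow$ completely regular). Your explicit closed-hyperplane argument for transferring completeness between $L_X$ and $L_X^0$ is a small elaboration the paper leaves implicit, but it is the intended reasoning.
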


The above proposition provides plenty of complete locally convex
spaces of countable algebraic dimension. We also need the shape of
the dual space of $L_X$. As shown in \cite{usp}, $L_X'$ can be
identified with the space $C(X)$ of continuous scalar valued
functions on $X$ in the following way. Every $f\in C(X)$ produces a
continuous linear functional on $L_X$ in the usual way:
$$
\langle f,\mu\rangle=\int f\,d\mu=\sum c_jf(x_j),\ \ \text{where}\ \
\mu=\sum c_j\delta_{x_j}
$$
and there are no other continuous linear functionals on $L_X$. Note
also that $L_X^0$ is the kernel of the functional $\langle {\bf
1},\cdot\rangle$, where ${\bf 1}$ is the constant 1 function.

\subsection{Operators $S_f$ with no invariant subspaces}

The following lemma is the main tool in the proof of
Theorem~\ref{main}.

\begin{lemma}\label{m1}Let $\tau$ be a regular topology on $\Z$ such that
$f:\Z\to \Z$, $f(n)=n+1$ is a homeomorphism of $\Z_\tau=(\Z,\tau)$
onto itself, $\Z_+$ is dense in $\Z_\tau$ and for every
$z\in\C\setminus\{0,1\}$, $n\mapsto z^n$ is non-continuous as a map
from $\Z_\tau$ to $\C$. Then the operators $T_f$ and $S_f$ are
invertible continuous linear operators on $L_{\Z_\tau}$ and
$L^0_{\Z_\tau}$ respectively and $S_f$ has no non-trivial closed
invariant subspaces.
\end{lemma}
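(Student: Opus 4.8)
The plan is to deal with invertibility by functoriality and then reduce the absence of invariant subspaces to a rigidity statement about continuous functions on $\Z_\tau$. Since $f$ is a homeomorphism, the universal property of the free locally convex space yields $T_{f^{-1}}\in L(L_{\Z_\tau})$ with $T_fT_{f^{-1}}=T_{f^{-1}}T_f=I$ (both sides extend $\mathrm{id}$ and agree with it on the Hamel basis $\Z_\tau$), so $T_f\in GL(L_{\Z_\tau})$; as $T_f$ preserves total mass it preserves $L^0_{\Z_\tau}$, and likewise $T_{f^{-1}}$, whence $S_f\in GL(L^0_{\Z_\tau})$. For the subspace statement I use that $S_f$ has no non-trivial closed invariant subspace once every non-zero $v\in L^0_{\Z_\tau}$ is cyclic. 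So fix a non-zero $v=\sum_jc_j\delta_{m_j}$ (finite support, distinct $m_j$, all $c_j\neq0$, and $\sum_jc_j=0$) and suppose $N=\spannn O(S_f,v)\neq L^0_{\Z_\tau}$. By Hahn--Banach there is $\phi\in(L^0_{\Z_\tau})'$ with $\phi|_N=0$, $\phi\neq0$, which extends to some $g\in L_{\Z_\tau}'=C(\Z_\tau)$. Because the only functions in $C(\Z_\tau)$ annihilating $L^0_{\Z_\tau}$ are the constants (as $\langle g,\delta_{n+1}-\delta_n\rangle=g(n+1)-g(n)$), $g$ is non-constant; and $\langle g,S_f^nv\rangle=0$ for all $n\geq0$ reads $\sum_jc_jg(m_j+n)=0$ for every $n\in\Z_+$. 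It therefore suffices to prove the rigidity claim that \emph{such a $g$ must be constant}, contradicting non-constancy.

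To prove the claim, set $\Psi=\sum_jc_j\,g\circ f^{m_j}\in C(\Z_\tau)$ (legitimate since every $f^{m_j}$ is a homeomorphism). Then $\Psi(n)=\sum_jc_jg(m_j+n)=0$ for $n\in\Z_+$, and since $\Z_+$ is dense and $\Psi$ is continuous, $\Psi\equiv0$ on $\Z_\tau$. Thus the integer shifts of $g$ obey a non-trivial relation $\sum_jc_j\,g\circ f^{m_j}=0$ whose extreme coefficients (at $m_{\min}$ and $m_{\max}$) are non-zero; composing with $f^t$ gives all the relations $\sum_jc_j\,g\circ f^{m_j+t}=0$, so solving for the top and bottom shift shows $V=\spann\{g\circ f^m:m\in\Z\}$ is finite dimensional and the shift $\Sigma h=h\circ f$ restricts to an invertible operator of $V$. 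Complexifying if $\K=\R$, $\Sigma|_V$ has an eigenvalue $\rho\neq0$ with eigenvector $h\in C(\Z_\tau,\C)$, $h\neq0$; from $h\circ f=\rho h$ we get $h(k)=h(0)\rho^k$ with $h(0)\neq0$, so $k\mapsto\rho^k$ is continuous on $\Z_\tau$, and hypothesis (c) forces $\rho=1$. Hence every complex eigenvalue of $\Sigma|_V$ equals $1$, i.e.\ $\Sigma|_V$ is unipotent.

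It remains to upgrade unipotency to $\Sigma|_V=I$, and this is the delicate point, since (c) is silent at $\rho=1$. If $\Sigma|_V\neq I$, then $(\Sigma-I)|_V$ is a non-zero nilpotent, so there is $h\in V$ with $(\Sigma-I)h=:w$ a non-zero constant function; that is $h(k+1)-h(k)=w$ for all $k$, whence $h(k)=h(0)+wk$ is a non-constant affine continuous function on $\Z_\tau$. I exclude this by a closure argument: any continuous $\psi\colon\Z_\tau\to\C$ satisfies $\psi(\Z)=\psi(\overline{\Z_+})\subseteq\overline{\psi(\Z_+)}$, yet for $\psi(k)=h(0)+wk$ with $w\neq0$ the set $\psi(\Z_+)$ is a closed discrete ray in $\C$, so $\overline{\psi(\Z_+)}=\psi(\Z_+)$ misses $\psi(-1)=h(0)-w$, a contradiction. (The same inclusion, together with the fact that $\Z_\tau$ has no isolated points---else it would be discrete and every $z^n$ continuous, against (c)---secures the density used above.) Therefore $\Sigma|_V=I$, so $g\circ f=g$ and $g$ is constant, the sought contradiction; hence every non-zero $v$ is cyclic and $S_f$ has no non-trivial closed invariant subspace. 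The main obstacle is exactly the eigenvalue $1$: hypothesis (c) removes every $\rho\neq1$ at a stroke but says nothing at $\rho=1$, and one must instead exploit the density of $\Z_+$ to kill the non-constant affine solutions that a higher multiplicity of the root $1$ would otherwise allow. I also note that the naive route via the earlier orbital remark is unavailable here, since the algebraic span of a single forward orbit can never be all of $L^0_{\Z_\tau}$, which is why the argument proves cyclicity of \emph{every} vector directly.
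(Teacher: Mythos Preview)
Your argument is correct and follows the same duality strategy as the paper: assume some non-zero $v\in L^0_{\Z_\tau}$ fails to be cyclic, produce a non-constant $g\in C(\Z_\tau)$ annihilating the orbit, observe that $g$ satisfies a non-trivial linear recurrence under the shift $\Sigma$, pass to the finite-dimensional $\Sigma$-invariant span $V$ of its translates, and read off an eigenfunction $h(k)=h(0)\rho^k$ to contradict hypothesis~(c). The one substantive difference is your explicit treatment of the eigenvalue $\rho=1$. The paper's proof simply asserts that ``it immediately follows that $T_f'$ has an eigenvector, which is given by a non-constant function $h$'', but as you correctly note, (c) says nothing about $\rho=1$, and if $\Sigma|_V$ happens to be unipotent the only eigenfunctions are constants. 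Your fix---observing that $\Sigma|_V\neq I$ would then produce a continuous non-constant affine function $h(k)=h(0)+wk$, impossible because $h(\Z_+)$ is already a closed discrete ray in $\C$ missing $h(-1)$ while density of $\Z_+$ forces $h(\Z)\subseteq\overline{h(\Z_+)}$---is a clean and necessary patch that uses only hypothesis~(b). So your proof is essentially the paper's argument with this lacuna repaired; the paper's version is terser but, read literally, incomplete at exactly the point you flagged.
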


\begin{proof} We shall prove the lemma in the case $\K=\C$. The case
$\K=\R$ follows easily by passing to complexifications.

We already know that $T_f$ and $S_f$ are continuous linear
operators. It is easy to see that $T_f^{-1}=T_{f^{-1}}$ and
$S_f^{-1}=S_{f^{-1}}$. Since $f^{-1}$ is also continuous, $T_f$ and
$S_f$ have continuous inverses.

Now let $\mu\in L^0_{\Z_\tau}\setminus\{0\}$. It remains to show
that $\mu$ is a cyclic vector for $S_f$. Assume the contrary. Then
there is a non-constant $g\in C(\Z_\tau)$ such that $\langle
S_f^n\mu,g\rangle=\langle T_f^n\mu,g\rangle=0$ for every $n\in\Z_+$.
Decomposing $\mu$ as a linear combination of point mass measures, we
have $\mu=\sum\limits_{k=-l}^l c_k\delta_k$ with $c_k\in\C$. Then
$\mu=\sum\limits_{j=0}^{2l}c_{j-k}T_f^{j}\delta_{-l}=p(T_f)\delta_{-l}$,
where $p$ is a non-zero polynomial. Then $0=\langle
T_f^n\mu,g\rangle=\langle T_f^n\delta_{-l},p(T_f)'g\rangle$ for
$n\in\Z_+$. Thus the functional $p(T_f)'g$ vanishes on the linear
span of $T_f^n\delta_{-l}$ with $n\in\Z_+$, which contains the
linear span of $\Z_+$ in $L_{\Z_\tau}$. Since $\Z_+$ is dense in
$\Z_\tau$, $p(T_f)'g$ vanishes on a dense linear subspace and
therefore $p(T_f)'g=0$. It immediately follows that $T_f'$ has an
eigenvector, which is given by a non-constant function $h\in C(X)$:
$T_f'h=zh$ for some $z\in\C$. Since $T_f$ is invertible, so is
$T_f'$ and therefore $z\neq 0$. It is easy to see that
$T_f'h(n)=h(n+1)$ for each $n\in\Z$. Thus the equality $T_f'h=zh$
implies that (up to a multiplication by a non-zero constant)
$h(n)=z^n$ for each $n\in\Z$. Since $h$ is non-constant, $z\neq 1$.
Thus the map $n\mapsto z^n$ is continuous on $\Z_\tau$ for some
$z\in\C\setminus\{0,1\}$. We have arrive to a contradiction.
\end{proof}

\begin{proof}[Proof of Theorem~$\ref{main}$] Let $\tau$ be
the topology on $\Z$ provided by Lemma~\ref{m2}. By
Proposition~\ref{prop3}, $E=L^0_{\Z_\tau}$ is a complete locally
convex space of countable algebraic dimension. Let $f:\Z\to\Z$,
$f(n)=n+1$ and $S=S_f\in L(E)$. By Lemmas~\ref{m1} and~\ref{m2}, $S$
is invertible and has no non-trivial invariant subspaces. The proof
is complete (with an added bonus of invertibility of $S$).
\end{proof}

\section{Proof of Theorem~\ref{main1}}

We fix the vector space and the operator from the start. The proof
will amount to constructing an appropriate topology on the space.

Let $E\subset \omega=\K^\N$ be the subspace of all sequences with
finite supports. That is, $x\in E$ precisely when
$\supp(x)=\{n\in\N:x_n\neq 0\}$ is finite. In other words,
$E=\spann\{e_n:n\in\N\}$, where $\{e_n\}_{n\in\N}$ is the standard
basis in $\omega$. Clearly $E$ is a dense countably dimensional
subspace of $\omega$. By Theorem~\ref{th1a}, $E$ is strongly
orbital. That is, there is $T\in L(\omega)$ and a hypercyclic vector
$u\in E$ for $T$ such that $E=\spann(O(T,u))$. Since $T\in
L(\omega)$, the matrix of $T$ (with respect to the standard basis)
is row finite. Since $T(E)\subseteq E$, the matrix of $T$ is column
finite. Thus the matrix of $T$ is both row and column finite. It
follows that there is $T^*\in L(\omega)$, whose matrix is the
transpose of the matrix of $T$ and that $T^*(E)\subseteq E$. Of
course, the matrix of $T^*$ is also row and column finite. We shall
use the $\langle\cdot,\cdot\rangle$ notation for the canonical
bilinear form on $\omega\times E$:
$$
\langle x,y\rangle=\sum_{n=1}^\infty x_ny_n.
$$
Clearly, the restriction of this form to $E\times E$ is symmetric
and $\langle Tx,y\rangle=\langle x,T^*y\rangle$ for every $x,y\in
E$. We say that a sequence $\{x_n\}_{n\in\N}$ in $E$ has {\it point
finite supports} if every $k\in\N$ can belong only to finitely many
sets $\supp(x_n)$.

For every sequence $v=\{v_n\}_{n\in\N}$ in $E$ with point finite
supports, we can define the seminorm $p_v$ on $E$:
$$
p_v(x)=\sum_{n=1}^\infty |\langle x,v_n\rangle|.
$$
The point finiteness of the supports of $v_n$ together with
finiteness of the support of $x$ guarantee, that the terms in the
above series are zero for all sufficiently large $n$. By ${\cal P}$,
we shall denote the set of all seminorms on $E$ obtained in this
way. For $p\in{\cal P}$, we denote $p^{[k]}(x)=p(T^kx)$ for
$k\in\Z_+$ and $x\in E$. Clearly, each $p^{[k]}$ is a seminorm on
$E$. Moreover, $p^{[k]}\in {\cal P}$ for every $p\in{\cal P}$ and
$k\in\Z_+$. Indeed, the column finiteness of the matrix of $T^*$
implies that if a sequence $v=\{v_n\}_{n\in\N}$ in $E$ has point
finite supports, then the sequence $\{T^*v_n\}_{n\in\N}$ also has
point finite supports. Thus the seminorm $p_v(T\cdot)$ also belongs
to ${\cal P}$:
$$
p_v(Tx)=\sum_{n=1}^\infty |\langle Tx,v_n\rangle|=\sum_{n=1}^\infty
|\langle x,T^*v_n\rangle|.
$$
We say that a subset $Q$ of ${\cal P}$ is $T$-{\it invariant} if
$p^{[1]}=p(T\,\cdot)$ belongs to $Q$ for every $p\in Q$. Obviously,
if $Q$ is a $T$-invariant subset of ${\cal P}$, then $T:E\to E$ is
continuous when $E$ carries the topology $\tau_Q$ defined by the
collection $Q$ of seminorms. From now on let
$$
\text{$\tau_0$ be the topology on $E$ inherited from $\omega$}.
$$
Obviously, the seminorms $\pi_k(x)=|x_k|$ belong to ${\cal P}$ and
the collection $\{\pi_k:k\in\N\}$ defines (on $E$) the topology
$\tau_0$. Furthermore, $\pi^{[s]}_k$ is also $\tau_0$-continuous for
every $k\in\N$ and $s\in\Z_+$. Let
\begin{equation}\label{p0}
{\cal P}_0=\{\pi^{[s]}_k:k\in\N,\ s\in\Z_+\}.
\end{equation}
Then ${\cal P}_0$ is a countable $T$-invariant subset of ${\cal P}$,
which defines the same topology $\tau_0$.

We say that $y\in\omega$ is $p_v$-{\it singular} if
$$
\sum_{n=1}^\infty |\langle y,v_n\rangle|=\infty.
$$

\begin{lemma}\label{compl} Let ${\cal Q}$ be a subset of ${\cal P}$
such that $\pi_k\in{\cal Q}$ for each $k\in\N$ and for every
$y\in\omega\setminus E$, there is $p\in{\cal Q}$ for which $y$ is
$p$-singular. Then the topological vector space $(E,\tau)$ is
complete, where $\tau=\tau_Q$ is the topology defined by the family
${\cal Q}$ of seminorms.
\end{lemma}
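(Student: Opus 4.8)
The plan is to deduce completeness from the criterion of Lemma~\ref{comple}, taking the ambient complete locally convex space to be the Fr\'echet space $\omega$ with its product topology, the subspace $F$ to be $E$, and the stronger topology $\theta$ to be $\tau_Q$. Writing $\tau_0$ for the trace of the product topology on $E$, I would need to verify three things: that $\tau_Q\supseteq\tau_0$; that every ball $B_q=\{x\in E:q(x)\leq1\}$ with $q\in{\cal Q}$ is $\tau_0$-closed in $E$; and that every $\tau_Q$-Cauchy net in $E$ has its limit in $\omega$ lying back in $E$.

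The first point is immediate, since $\pi_k\in{\cal Q}$ for all $k$ and the $\pi_k$ already generate $\tau_0$ on $E$, so $\tau_Q$ refines $\tau_0$. For the second, I would fix $q=p_v\in{\cal Q}$ and exploit that each $v_n$ has finite support, so that $x\mapsto\langle x,v_n\rangle$ is a finite linear combination of coordinates and hence $\tau_0$-continuous. Given $x^{(j)}\to x$ in $\tau_0$ with $p_v(x^{(j)})\leq1$ and $x\in E$, the set $N_x=\{n:\langle x,v_n\rangle\neq0\}$ is finite, so $p_v(x)=\lim_j\sum_{n\in N_x}|\langle x^{(j)},v_n\rangle|\leq\liminf_j p_v(x^{(j)})\leq1$; thus $p_v$ is $\tau_0$-lower semicontinuous and $B_q$ is closed. (I would record this for nets, but as $\tau_0$ is metrizable sequences suffice.)

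The crux, and the step where the singularity hypothesis is essential, is the third. Let $\{x_\alpha\}$ be a $\tau_Q$-Cauchy net in $E$. As $\tau_Q\supseteq\tau_0$, it is Cauchy in the complete space $\omega$ and hence converges coordinatewise to some $y\in\omega$; I must show $y\in E$. If instead $y\in\omega\setminus E$, the hypothesis supplies $p=p_v\in{\cal Q}$ with $y$ being $p$-singular, i.e. $\sum_n|\langle y,v_n\rangle|=\infty$. But a $\tau_Q$-Cauchy net is $p$-Cauchy, hence eventually $p$-bounded: $p(x_\alpha)\leq M$ for all large $\alpha$. For each fixed $N$, the $\tau_0$-continuity of the functionals $\langle\cdot,v_n\rangle$ gives $\sum_{n=1}^N|\langle y,v_n\rangle|=\lim_\alpha\sum_{n=1}^N|\langle x_\alpha,v_n\rangle|\leq M$, and letting $N\to\infty$ yields $p(y)\leq M<\infty$, a contradiction. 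Hence $y\in E$, the hypotheses of Lemma~\ref{comple} are met, and $(E,\tau_Q)$ is complete. I expect this final step to be the only genuine obstacle; once $\omega$ is identified as the right ambient space, the remainder is routine bookkeeping.
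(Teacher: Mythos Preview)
Your proposal is correct and follows essentially the same route as the paper: both apply Lemma~\ref{comple} with ambient space $\omega$, verify that $\tau_Q\supseteq\tau_0$ and that $p_v$-balls are $\tau_0$-closed via lower semicontinuity, and then use the singularity hypothesis to rule out $\tau_0$-limits in $\omega\setminus E$ by bounding finite partial sums. Your exposition fills in a bit more detail on the ball-closure step, but the argument is identical in substance.
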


\begin{proof} Clearly $\omega$ is complete and induces on $E$ the
topology weaker than $\tau$. Next, it is easy to see that $\{x\in
E:p(x)\leq 1\}$ is $\tau_0$-closed in $E$ for every $p\in{\cal P}$.
By Lemma~\ref{comple}, the completeness of $(E,\tau)$ will be
verified if we show that for every $\tau$-Cauchy net $\{x_\alpha\}$
in $E$, its $\tau_0$-limit $\lim_{\tau_0} x_\alpha$ belongs to $E$.

Let $\{x_\alpha\}$ be a $\tau$-Cauchy net in $E$ and $x\in\omega$ be
the $\tau_0$-limit of $\{x_\alpha\}$ in $\omega$. We have to show
that $x\in E$. Assume the contrary. Then by our assumptions there is
$p_v\in{\cal Q}$ such that $x$ is $p_v$-singular. Since $p_v\in{\cal
Q}$, $\{p_v(x_\alpha)\}_{\alpha\in D}$ is a Cauchy net in
$[0,\infty)$. Hence there is a non-negative real number $c$ such
that $p_v(x_\alpha)\to c$. Next, since $\{x_\alpha\}$ converges to
$x$ in $\omega$
$$
c=\lim_\alpha p_v(x_\alpha)\geq\lim_\alpha\sum_{n=1}^m |\langle
x_\alpha,v_n\rangle|=\sum_{n=1}^m |\langle x,v_n\rangle|\ \ \
\text{for every $m\in\N$}.
$$
We have arrived to a contradiction with the $p_v$-singularity of
$x$, which proves that $x\in E$.
\end{proof}

Let $G$ be the set of $x\in E$ such that each $x_k$ is rational
($x_k\in \qq+i\qq$ in the case $\K=\C$). Clearly $G$ is a countable
subset of $E$. Furthermore, it is easy to see that
\begin{equation}\label{evto}
\text{$G$ is $\tau$-dense in $E$ for every topology $\tau$ such that
$(E,\tau)$ is a topological vector space}.
\end{equation}
The following lemma is our main instrument.

\begin{lemma}\label{instru} Let $x\in
\omega\setminus E$ and $S$ be a countable subset of ${\cal P}$ such
that $O(T,u)$ is $\tau_S$-dense in $E$ and $\tau_0\subseteq\tau_S$,
where $\tau_S$ is the topology generated by $S$. Then there exists a
sequence $v=\{v_n\}_{n\in\N}$ in $E$ with point finite supports such
that $x$ is $p_v$-singular and $O(T,u)$ is $\tau_{S'}$-dense in $E$,
where $S'=S\cup\{p_v^{[k]}:k\in\Z_+\}$.
\end{lemma}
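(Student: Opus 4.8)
The plan is to build the sequence $\{v_n\}$ by a recursion that interleaves two kinds of moves: moves that force $\sum_n|\langle x,v_n\rangle|=\infty$, and moves that protect the $\tau_{S'}$-density of $O(T,u)$. The apparent circularity — each $p_v^{[k]}$ depends on \emph{all} the $v_n$, including those not yet chosen — is the only genuine difficulty, and the whole construction is organized to neutralize it by pushing the supports of the $v_n$ out to infinity. The key observation is that once an orbit point $T^mu$ has been selected to approximate a target $g$, any vector $w\in E$ whose support lies beyond $\max_{k\le K}\supp\bigl(T^k(T^mu-g)\bigr)$ contributes nothing to $p_v^{[k]}(T^mu-g)$ for $k\le K$, since $\langle T^k(T^mu-g),w\rangle=\langle T^mu-g,(T^*)^kw\rangle=0$. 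Thus each approximation, once made, is permanently shielded from every $v_n$ added afterwards, and depends only on the finitely many $v_n$ already committed.

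To make this precise, first I would enumerate as $\mathcal{T}_1,\mathcal{T}_2,\dots$ all data $(g,(q_1,\dots,q_r),K,\epsilon)$ with $g\in G$, $q_1,\dots,q_r\in S$, $K\in\Z_+$ and rational $\epsilon>0$; since $S'$ is countable, by \eqref{evto} it suffices to meet every such task. I maintain a nondecreasing integer threshold $L_i$ (with $L_0=0$). At stage $i$, suppose $v_1,\dots,v_{i-1}$ have been chosen with $\min\supp(v_n)>L_n$, and write $\mathcal{T}_i=(g,(q_1,\dots,q_r),K,\epsilon)$. Because $T^*(E)\subseteq E$, each $(T^*)^kv_n$ lies in $E$, so $y\mapsto\langle y,(T^*)^kv_n\rangle$ is $\tau_0$-continuous and hence the seminorm $\rho(y)=\sum_{k=0}^{K}\sum_{n=1}^{i-1}|\langle y,(T^*)^kv_n\rangle|$ is $\tau_0$-continuous, therefore $\tau_S$-continuous as $\tau_0\subseteq\tau_S$. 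Since $O(T,u)$ is $\tau_S$-dense, the $\tau_S$-neighbourhood $\{y:q_j(y-g)<\epsilon\ (j\le r),\ \rho(y-g)<\epsilon\}$ of $g$ contains some $T^{m_i}u$. I then set $M_i=\max\{\max\supp(T^k(T^{m_i}u-g)):0\le k\le K\}$ and $L_i=\max(L_{i-1},M_i)+1$.

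Next I choose the $i$-th vector: since $x\in\omega\setminus E$ has infinite support, I pick a coordinate $j>L_i$ with $x_j\ne0$ and set $v_i=x_j^{-1}e_j$, so that $\langle x,v_i\rangle=1$ and $\min\supp(v_i)>L_i$. Because $|\langle x,v_n\rangle|=1$ for every $n$, we get $\sum_n|\langle x,v_n\rangle|=\infty$, so $x$ is $p_v$-singular; and since $L_i\to\infty$ the supports of the $v_n$ tend to infinity, in particular they are point finite, so $p_v\in{\cal P}$. For each task $\mathcal{T}_i$ the protection applies: every $v_n$ with $n\ge i$ satisfies $\min\supp(v_n)>L_n\ge L_i\ge M_i$ and hence contributes $0$ to $p_v^{[k]}(T^{m_i}u-g)$ for all $k\le K$, while the terms with $n<i$ are exactly those appearing in $\rho$. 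Therefore $p_v^{[k]}(T^{m_i}u-g)\le\rho(T^{m_i}u-g)<\epsilon$ for every $k\le K$, together with $q_j(T^{m_i}u-g)<\epsilon$, so $T^{m_i}u$ lies in the $\tau_{S'}$-neighbourhood of $g$ prescribed by $\mathcal{T}_i$. As the tasks $\mathcal{T}_i$ refine every basic $\tau_{S'}$-neighbourhood of every point of the $\tau_{S'}$-dense set $G$, it follows that $O(T,u)$ is $\tau_{S'}$-dense.

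The hard part is exactly the circular dependence flagged above, and the device of letting $L_i\to\infty$ is what resolves it: the growing supports decouple each committed approximation from all later $v_n$, so the density requirement, which nominally involves the full seminorms $p_v^{[k]}$, collapses at every stage to a requirement on finitely many already-chosen $v_n$ — and that is precisely the kind of requirement the $\tau_S$-density of $O(T,u)$ is able to satisfy. Everything else (point finiteness, membership in ${\cal P}$, singularity of $x$) is then immediate from the support bookkeeping.
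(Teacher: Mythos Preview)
Your proposal is correct and follows essentially the same route as the paper. Both arguments interleave (i) picking an orbit point $T^{m}u$ close to a target $g\in G$ with respect to finitely many seminorms in $S$ together with a $\tau_0$-continuous constraint coming from the already chosen $v_1,\dots,v_{i-1}$, and (ii) then pushing the support of the next $v_i$ beyond $\max_{l\le K}\supp\bigl(T^l(T^{m}u-g)\bigr)$ so that all later $v_n$ contribute zero to the relevant $p_v^{[l]}$-errors; singularity of $x$ is forced by taking $v_i$ a multiple of a standard basis vector at a coordinate in $\supp(x)$. The only cosmetic difference is that the paper runs a single diagonal: at stage $k$ it fixes the target $g_k$, the seminorm $r_k=q_1+\dots+q_k$, tolerance $1/k$, and handles iterates $T^l$ for $l<k$, then shows $p_v^{[l]}(g_j-T^{n_j}u)\to0$ as $j\to\infty$; you instead enumerate all tasks $(g,(q_1,\dots,q_r),K,\epsilon)$ and hit each one directly. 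This is purely organizational and neither version gains anything substantive over the other.
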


\begin{proof} Since $S$ is countable, we can write
$S=\{q_k:k\in\N\}$. For each $n\in\N$, let $r_n=q_1+{\dots}+q_n$.
Then $\{r_n\}_{n\in\N}$ is an increasing sequence of seminorms
(belonging to ${\cal P}$) and defining the topology $\tau_S$ on $E$.
Since $G$ is countable, $G=\{g_n:n\in\N\}$. We shall construct
inductively the sequences $\{n_k\}_{k\in\N}$ and $\{m_k\}_{k\in\N}$
of positive integers and $\{v_k\}_{k\in\N}$ in $E$ such that
\begin{align}
&\text{$n_k>n_{k-1}$ and $m_k>m_{k-1}$ if $k\geq 2$;}\label{pno1}
\\
&\text{$x_{m_k}\neq 0$ and $\textstyle
v_k=\frac1{|x_{m_k}|}e_{m_k};$}\label{pno2}
\\
&\textstyle r_k(g_k-T^{n_k}u)<\frac1k;\label{pno3}
\\
&\langle T^l(g_j-T^{n_j}u),v_m\rangle=0\ \ \ \text{for $0\leq
l<j\leq m\leq k$};\label{pno4}
\\
&|\langle T^l(g_j-T^{n_j}u),v_m\rangle|<2^{-j}\ \ \ \text{for $0\leq
l<j\leq k$ and $1\leq m<j\leq k$}.\label{pno5}
\end{align}

{\bf Basis of induction.} Since $O(T,u)$ is $\tau_S$-dense in $E$,
there is $n_1\in\N$ such that $r_1(T^{n_1}u-g_1)<1$. Since $x\notin
E$, we can find $m_1\in\N$ such that $x_{m_1}\neq 0$ and
$m_1\notin\supp(g_1-T^{n_1}u)$. Denote
$v_1=\frac{e_{m_1}}{|x_{m_1}|}$. Since
$m_1\notin\supp(g_1-T^{n_1}u)$, we have $\langle
g_1-T^{n_1}u,v_1\rangle=0$. Clearly (\ref{pno1}--\ref{pno5}) with
$k=1$ are satisfied.

{\bf Induction step.} Assume now that $q\geq 2$ and $n_j,m_j,v_j$
satisfying (\ref{pno1}--\ref{pno5}) for every $k<q$ are already
constructed. We shall construct $n_q$, $m_q$ and $v_q$ such that
(\ref{pno1}--\ref{pno5}) are satisfied with $k=q$. Since $O(T,u)$ is
$\tau_S$-dense in $E$ and $\tau_0\subseteq\tau_S$, we can choose
$n_q\in\N$ such that
\begin{equation}\label{www}
\text{$n_q>n_{q-1}$, $\textstyle r_q(g_q-T^{n_q}u)<\frac1q$ and
$|\langle T^l(g_q-T^{n_q}u),v_m\rangle|<2^{-q}$ \ for $0\leq l<q$
and $1\leq m<q$}.
\end{equation}
Since $x\notin E$, there is $m_q\in\N$ such that
\begin{equation}\label{wwww}
\text{$m_q>m_{q-1}$, $x_{m_q}\neq 0$ and $\textstyle
m_q\notin\bigcup\limits_{0\leq l<j\leq q}\supp(T^l(g_j-T^{n_j}u))$}.
\end{equation}
Since $x_{m_q}\neq 0$, we can define
$v_q=\frac{e_{m_q}}{|x_{m_q}|}$. Clearly, (\ref{pno1}), (\ref{pno2})
and (\ref{pno3}) with $k=q$ are satisfied. The condition
(\ref{pno4}) with $k=q$ follows from (\ref{wwww}) and (\ref{pno4})
with $k<q$. Similarly, (\ref{pno5}) with $k=q$ follows from
(\ref{www}) and (\ref{pno5}) with $k<q$. This concludes the
inductive construction of the sequences $\{n_k\}_{k\in\N}$,
$\{m_k\}_{k\in\N}$ and $v=\{v_k\}_{k\in\N}$ satisfying
(\ref{pno1}--\ref{pno5}).

By (\ref{pno1}) and (\ref{pno2}) the supports of $v_k$ are pairwise
disjoint and therefore are point finite. Hence $p_v\in{\cal P}$. It
remains to show that $O(T,u)$ is $\tau_{S'}$-dense in $E$, where
$S'=S\cup\{p_v^{[k]}:k\in\Z_+\}$. By (\ref{pno3}),
$$
r_k(g_k-T^{n_k}u)\to 0.
$$
Now let $l\in\Z_+$. Using (\ref{pno4}) and (\ref{pno5}), we see that
for $j>l$,
\begin{align*}
p^{[l]}(g_j-T^{n_j}u)&=\sum_{m=1}^\infty |\langle
T^l(g_j-T^{n_j}u,v_m) \rangle|=\sum_{m=1}^j |\langle
T^l(g_j-T^{n_j}u,v_m) \rangle|+\sum_{m=j+1}^\infty |\langle
T^l(g_j-T^{n_j}u,v_m) \rangle|
\\
&=\sum_{m=1}^j |\langle T^l(g_j-T^{n_j}u,v_m) \rangle|<j2^{-j}\to 0.
\end{align*}
According to the last two displays, $g_j-T^{n_j}u\to 0$ in
$\tau_{Q'}$. By (\ref{evto}), $\{g_j:j\in\N\}$ is $\tau_{Q'}$-dense
in $E$. Since $g_j-T^{n_j}u\to 0$ in $\tau_{Q'}$, it follows that
$\{T^{n_j}u:j\in\N\}$ is $\tau_{Q'}$-dense in $E$. Hence $O(T,u)$ is
$\tau_{Q'}$-dense in $E$.
\end{proof}

Now we are ready to prove Theorem~\ref{main1}. Since
$|\omega\setminus E|=2^{\aleph_0}$, the Continuum Hypothesis implies
that $|\omega\setminus E|=\aleph_1$. We shall also denote the first
ordinal of cardinality $\aleph_1$ by the same symbol $\aleph_1$. The
equality $|\omega\setminus E|=\aleph_1$ allows us to enumerate
$\omega\setminus E$ by the ordinals $\alpha$ satisfying $1\leq
\alpha<\aleph_1$: $\omega\setminus E=\{x_\alpha\}_{1\leq
\alpha<\aleph_1}$. We shall construct inductively a chain $\{{\cal
P_\alpha}\}_{\alpha<\aleph_1}$ of subsets of ${\cal P}$ satisfying
the following conditions
\begin{align}
&\text{${\cal P}_0$ is the set defined in (\ref{p0});}\label{tro1}
\\
&\text{${\cal P}_\beta\subseteq{\cal P}_\alpha$ if
$\beta<\alpha$;}\label{tro2}
\\
&\text{${\cal P}_\alpha$ is countable and
$T$-invariant;}\label{tro3}
\\
&\text{there is $p\in {\cal P}_\alpha$ such that $x_\alpha$ is
$p$-singular if $\alpha\geq 1$;}\label{tro4}
\\
&\text{$O(T,u)$ is $\tau_\alpha$-dense in $E$, where $\tau_\alpha$
is the topology defined by ${\cal P}_\alpha$}.\label{tro5}
\end{align}

We will use the following elementary observation.
\begin{equation}\label{buba}
\begin{array}{l}
\text{If $\{\tau_s\}_{s\in S}$ is a totally ordered by inclusion
collection of topologies on the same set $X$,}
\\
\text{then $\tau=\bigcup\tau_s$ is a topology on $X$. Furthermore, a
set $A\subseteq X$ is $\tau$-dense in $X$ if and}
\\
\text{only if $A$ is $\tau_s$-dense in $X$ for each $s\in S$.}
\end{array}
\end{equation}
The set ${\cal P}_0$ serves as the basis of induction. Assume now
that $\gamma<\aleph_1$ and ${\cal P}_\beta$ for $\beta<\gamma$
satisfying (\ref{tro2}--\ref{tro5}) for each $\alpha<\gamma$ are
already constructed. First, set ${\cal
Q}_\gamma=\bigcup\limits_{\beta<\gamma}{\cal P}_\beta$. Then ${\cal
Q}_\gamma$ is $T$-invariant (as a union of $T$-invariant sets) and
countable as a countable union of countable sets. Furthermore, the
topology $\theta$ generated by ${\cal Q}_\gamma$ is the union of
$\tau_\beta$ for $\beta<\gamma$. By (\ref{buba}) and (\ref{tro5})
for $\alpha<\gamma$, $O(T,u)$ is $\theta$-dense in $E$. Since
$x_\gamma\in \omega\setminus E$, all conditions of
Lemma~\ref{instru} with $x=x_\gamma$ and $S={\cal Q}_\gamma$ are
satisfied. Lemma~\ref{instru} provides a sequence
$v=\{v_n\}_{n\in\N}$ in $E$ with point finite supports such that
$x_\gamma$ is $p_v$-singular and $O(T,u)$ is $\tau_\gamma$-dense in
$E$, where $\tau_\gamma$ is generated by ${\cal P}_\gamma={\cal
Q}_\gamma\cup\{p_v^{[k]}:k\in\Z_+\}$. Clearly, the just constructed
set ${\cal P}_\gamma$ satisfies (\ref{tro2}--\ref{tro5}) with
$\alpha=\gamma$. This concludes the construction of the chain
$\{{\cal P_\alpha}\}_{\alpha<\aleph_1}$ of subsets of ${\cal P}$
satisfying (\ref{tro1}--\ref{tro5}). Now we consider
$$
{\cal P}^*=\bigcup_{\alpha<\aleph_1}{\cal P}_\alpha.
$$
Clearly ${\cal P}^*$ is $T$-invariant as a union of $T$-invariant
sets. Hence $T$ is a continuous linear operator on $(E,\tau^*)$,
where the topology $\tau^*$ defined by ${\cal P}^*$. Furthermore,
since ${\cal P}_\alpha$ are totally ordered by inclusion, $\tau^*$
coincides with $\bigcup \tau_\alpha$ for $\alpha<\aleph_1$.
According to (\ref{tro5}) and (\ref{buba}), $O(T,u)$ is
$\tau^*$-dense in $E$. Hence $u$ is a hypercyclic vector for $T\in
L(E_{\tau^*})$. Finally, by (\ref{tro2}) and (\ref{tro4}), for every
$x\in\omega\setminus E$, there is $p\in{\cal P}^*$ such that $x$ is
$p$-singular. Now Lemma~\ref{compl} guarantees the completeness of
$(E,\tau^*)$. Thus $E_{\tau^*}=(E,\tau^*)$ is a complete locally
convex space, $T\in L(E_{\tau^*})$, $u$ is a hypercyclic vector for
$T$ and $E=\spann(O(T,u))$. Thus $E_{\tau^*}$ is complete and
strongly orbital.

\section{Open problems}

We have characterized orbital and strongly orbital metrizable
locally convex spaces. The following more general problems remains
open.

\begin{question}\label{q1} Characterize orbital countably
dimensional locally convex spaces. Characterize orbital countably
dimensional complete locally convex spaces.
\end{question}

\begin{question}\label{q2} Characterize strongly orbital
countably dimensional locally convex spaces. Characterize strongly
orbital countably dimensional complete locally convex spaces.
\end{question}

\begin{question}\label{q3} Characterize
countably dimensional locally convex spaces having the invariant
subspace/subset property. Characterize complete countably
dimensional locally convex topological vector spaces having the
invariant subspace/subset property.
\end{question}

\begin{question}\label{q4} Can one get rid of the
Continuum Hypothesis in Theorem~\ref{main1}\,{\rm ?}
\end{question}

%

\vfill\break

\small\rm

\vskip1truecm

\scshape

\noindent Stanislav Shkarin

\noindent Queens's University Belfast

\noindent Pure Mathematics Research Centre

\noindent University road, Belfast, BT7 1NN, UK

\noindent E-mail address: \qquad {\tt s.shkarin@qub.ac.uk}

\end{document}